\newtheorem{thm}{Theorem}[section]
\newtheorem{corollary}[thm]{Corollary}
\newtheorem{proposition}[thm]{Proposition}
\newtheorem{theorem}[thm]{Theorem}
\newtheorem{lemma}[thm]{Lemma}
\newtheorem{definition}[thm]{Definition}
\newtheorem{remark}[thm]{Remark}
\numberwithin{equation}{section}
\newcommand{\riemannDerivative}{D}
\newcommand{\genericGroup}{G}
\newcommand{\auxfunct}{{\widetilde{g}}}
\newcommand{\auxangle}{\gamma}
\newcommand{\finalfunct}{g}
\newcommand{\finalangle}{\beta}
\newcommand{\RR}{\mathbb{{R}}}
\newcommand{\NN}{\mathbb{{N}}}
\newcommand{\CC}{\mathbb{{C}}}
\newcommand{\BSect}{\text{\normalfont{BSect}}}
\newcommand{\EOVarphi}{\mathcal{E}_0 (BS_{\varphi,a})}
\newcommand{\EOmega}{\mathcal{E} [BS_{\omega,a}]}
\newcommand{\MOmega}{\mathcal{M} [BS_{\omega,a}]}
\title[Generalized Black-Scholes PDEs]{Introducing and solving generalized Black-Scholes PDEs through the use of functional calculus}
\author{Jes\'us Oliva-Maza}
\address{J. Oliba-Maza: Departamento de Matematicas, Instituto Universitario de Matematicas y Aplicaciones, Universidad de Zaragoza, Zaragoza 50009, Spain}
\email{joliva@unizar.es}
\author{Mahamadi Warma}
\address{M. Warma: Department of Mathematical Sciences and the Center for Mathematics and Artificial Intelligence (CMAI),
	George Mason University, Fairfax, VA 22030, USA}
\email{mwarma@gmu.edu}
\thanks{This work has been done when the first author was visiting the Department of Mathematical Sciences at George Mason University in Fairfax, Virginia. He is grateful for the financial aid of Warma's AFOSR research grant. The work of J. Oliva-Maza  is partially supported by Project PID2019-105979GB-I00 of the MICINN, and by BES-2017-081552, MINECO, Spain and FSE. The work of M. Warma is partially supported by AFOSR under Award NO:  FA9550-18-1-0242 and US Army Research Office (ARO) under Award NO: W911NF-20-1-0115.}
\subjclass[2010]{26A33, 47A60, 47B12,  47D06}
\keywords{Bisectorial operators, sectorial operators,  functional calculus, holomorphic semigroups, Riemann-Liouville space-fractional derivative,  Weyl space-fractional derivative, generalized  Black-Scholes equations}
\begin{document}

\begin{abstract}
We introduce some families of generalized Black--Scholes equations which involve the Riemann-Liouville and Weyl space-fractional derivatives. We prove that these generalized Black--Scholes equations are well-posed in $(L^1-L^\infty)$-interpolation spaces. More precisely, we show that the elliptic type operators involved in these equations generate holomorphic semigroups. Then, we give explicit integral expressions for the associated solutions.  In the way to obtain well-posedness,  we prove a new connection between bisectorial operators and sectorial operators in an abstract setting. Such a connection extends some known results in the topic to a wider family of both operators and the functions involved.
\end{abstract}

\maketitle

\section{Introduction}

Bisectorial operators play a central role in the theory of abstract inhomogeneous first order differential equations on the whole real line, like
\begin{equation}\label{rl}
	u'(t) = Au(t) + f(t),\qquad t\in\RR,
\end{equation}
where $A$ is a bisectorial operator on a Banach space $X$. 
The theory has been an active topic of research during the past years. We refer to \cite{arendt2010decomposing, schweiker2000asymptotics} and their references for a complete overview, the importance, and applications of such operators. 

%On the other hand,  
It is well-known that sectorial operators are related to the theory of abstract first order homogeneous differential equations on the positive real axis with initial conditions. Namely, equations of the form
\begin{equation}\label{prl}
	v'(t) = Av(t),\quad t>0,\qquad  v(0)=g.
\end{equation}
We refer to the monographs  \cite{ arendt2011vector, engel2000one} and their references for a precise description of this fact. 

Both families of operators are closely related.  Indeed,  an operator $A$ is bisectorial if and only if both $aI+A$ and $aI-A$ are sectorial for some real number $a\geq 0$, where $I$ denotes the identity operator. As one may expect,  both families share multiple properties and, in particular, one can define similar functional calculus for the two families of operators.
%, where one has to ask some decay properties of the functions at the singularity points of the spectrum of the involved operator $A$. 
Indeed, in this work we make use of the meromorphic functional calculus from sectorial operators completely developed in the excellent monograph \cite{haase2006functional} and we adapt it carefully to the theory of  bisectorial operators, which extends the functional calculi considered in \cite{morris2010local} and the references therein.  

%The spectral theory of these two families of operators is of particular relevance. More precisely, we refer to \cite{arendt2010decomposing} for a spectral decomposition of bisectorial operators,  and to \cite{haase2005spectral} for a spectral mapping theorem of sectorial operators.

Another connection between these two families lies in the fact that $A^2$ is sectorial whenever $A$ is a bisectorial operator, see e.g.  \cite[Proposition 5.1]{arendt2010decomposing}. In the particular case that $A$ generates an exponentially bounded group, then $A^2$ generates a holomorphic semigroup (see for instance \cite[Theorem 1.15]{arendt1986one} or \cite[Corollary 4.9]{engel2000one}). An application of this result is to study differential equations on the positive real line like Equation \eqref{prl} in terms of a possibly simpler equation on the real line Equation  \eqref{rl}. One may find a concrete example of this fact in \cite{arendt2002spectrum}, where the authors obtained properties of the classical Black--Scholes equation
\begin{equation}\label{BSEquationIntro} \tag{BS}
	u_t = x^2 u_{xx} + x u_x, \qquad t,x >0,
\end{equation}
through the simpler and elegant partial differential equation 
$$u_t = -xu_x,\qquad t,x > 0.$$
Since the seminal work \cite{black1973pricing}, the Black--Scholes equation \eqref{BSEquationIntro} has been an active topic of research in mathematical finance due to its importance in the modeling of pricing options contracts, see for instance \cite{gozzi1997generation} and the references therein. One of our purposes in the present paper is to study fractional differential equations which extend the classical equation \eqref{BSEquationIntro}, using the theory of bisectorial operators established here. At this point, we wish to observe that part of our contribution is to show how the relations between bisectorial operators and sectorial operators that we develop here can be used successfully to solve several of those equations extending \eqref{BSEquationIntro}.  We are not dealing with mathematical finance in this paper, but on the other hand we are confident that our new generalized Black--Scholes partial differential equations could be used to understand the disturbing and anomalous behavior of the financial market.

%An interesting application of our theory lies in the study of several extensions of the Black--Scholes equation \eqref{BSEquationIntro} on the so called $(L^1-L^\infty)$ interpolation spaces.
%, equation which is given by 
%\begin{equation}\label{BSEquationIntro} \tag{BS}
%	u_t = x^2 u_{xx} + x u_x, \qquad t,x >0.
%\end{equation}

%Let us explain how our theory is connected with the classical Black--Scholes  equation \eqref{BSEquationIntro}.
Let us explain the method we follow to extend \eqref{BSEquationIntro} to a fractional differential equation. As stated before, the classical Black--Scholes equation is studied by means of the following degenerate differential operator:
\begin{equation}\label{opA}
	(Jf)(x) := -xf'(x),\qquad x>0,
\end{equation}
on $(L^1-L^\infty)$ interpolation spaces that we shall explain in Section \ref{BlackScholesSection}.  In \cite{arendt2002spectrum}, the authors used the connection between the operator $J$ and the classical Cesàro operator $\mathcal{C}$ given by
\begin{align*}
	(\mathcal{C} f)(x) &= \frac{1}{x} \int_0^x f(y) \, dy, \quad x>0
\end{align*}
and its adjoint Cesàro operator $\mathcal{C}^\ast$. This connection had been first pointed out in \cite{cowen1984subnormality} to study the Cesàro operator $\mathcal{C}$ on the half-plane. In addition, the differential operator $J$ was also related in \cite{lizama2014boundedness} to the generalized fractional version of the Ces\`aro operator $\mathcal{C}_\alpha$ on $L^p$-spaces, for real numbers $\alpha > 0$,  given by
\begin{equation}\label{al-cesao}
	(\mathcal{C}_\alpha f)(x) = \frac{\alpha}{x^\alpha} \int_0^x (x-y)^{\alpha - 1}f(y) \, dy = \frac{\Gamma(\alpha+1)}{x^\alpha} (\riemannDerivative^{-\alpha}f) (x) , \quad x>0
\end{equation}
and the associated adjoint Ces\`aro operator $\mathcal{C}_\alpha^\ast$ given by
\begin{equation}\label{ad-cesaro}
	(\mathcal{C}_\alpha^\ast f)(x) = \alpha \int_x^\infty \frac{(y-x)^{\alpha - 1}}{y^\alpha}f(y) \, dy =  \Gamma(\alpha+1) (W^{-\alpha}(y^{-\alpha}f)) (x), \quad x>0,
\end{equation}
where $\riemannDerivative^{-\alpha}$ and $W^{-\alpha}$ denote the Riemann-Liouville and Weyl fractional integrals of order $\alpha$,  respectively, see Section \ref{notations} for more details. We notice that $\riemannDerivative^{-\alpha}$ is usually denoted by $I^{-\alpha}$ in most of the literature on fractional calculus. The connection between $J$ and $\mathcal{C}_\alpha$, $\mathcal{C}_\alpha^\ast$ is given in terms of the exponentially bounded group $(G(s))_{s\in \RR}$ generated by $J$, with expression $(G(s)f)(x) = f(e^{-s}x)$. More precisely, we have that 
\begin{equation}
	\begin{aligned}\label{FC1}
		(\mathcal{C}_\alpha f)(x)  &= \alpha \int_0^\infty e^{-s} (1-e^{-s})^{\alpha-1} (G(s)f)(x)\, ds, \quad x >0,
		\\ (\mathcal{C}_\alpha^\ast f)(x) & = \alpha \int_{-\infty}^0 (1-e^{s})^{\alpha-1} (G(s)f)(x)\, ds, \quad x > 0.
	\end{aligned}
\end{equation}
%In \eqref{FC1} and \eqref{FC1}, $I^{-\alpha}$ denotes the Riemann-Liouville fractional integral operator and $W^{-\alpha}$, the Weyl fractional integral operator, see Section \ref{notations} for more details.
% \textcolor{red}{Ya los definimos mas adelante en la seccion 2, igual se puede quitar de aqui}which are given, respectively, for a suitable function $f$ defined on $(0,\infty)$ by
%\begin{align}\label{RLFI-1}
%	I^{-\alpha}f (x) := \frac{1}{\Gamma(\alpha)} \int_0^x (x-y)^{\alpha-1} f(y) \, dy, \quad x > 0,
%\end{align}
%and
%\begin{align}\label{WFI-1}
%W^{-\alpha}f (x) &:= \frac{1}{\Gamma(\alpha)} \int_x^\infty (y-x)^{\alpha-1} f(y) \, dy, \quad x > 0.
%\end{align}

Furthermore, \eqref{FC1} yields a representation in functional calculus of the type $\mathcal{C}_\alpha = f_\alpha(J)$, $\mathcal{C}_\alpha^\ast = f_\alpha^\ast(J)$ for some suitable holomorphic functions $f_\alpha,\, f_\alpha^\ast$ which have some fractional-powers like behavior that we shall specify later in the paper. Therefore,  as in the same way one can write \eqref{BSEquationIntro} in terms of the operators $\mathcal{C}, \,\mathcal{C}^\ast$, it seems natural to construct some families of generalized Black--Scholes equations which can be written in terms of $\mathcal{C}_\alpha,\, \mathcal{C}_\alpha^\ast$, operators which,  respectively involve fractional Riemann-Liouville derivatives of order $\alpha$, $\riemannDerivative^\alpha$, and fractional Weyl derivatives of order $\alpha$, $W^\alpha$. This method will give rise to generalized fractional Black--Scholes equations of the following three forms for $x$, $t>0$:
\begin{equation}\label{GBSE}
	\begin{aligned}
		%\textcolor{red}{(-1)^{n+1}}
		u_t & = \frac{1}{\Gamma(\alpha+1)^2} \riemannDerivative^\alpha (x^\alpha \riemannDerivative^\alpha (x^\alpha u)) - \frac{2}{\Gamma(\alpha+1)} \riemannDerivative^\alpha (x^\alpha u) + u, \\
		%\textcolor{red}{(-1)^{n+1}}
		u_t &= \frac{1}{\Gamma(\alpha+1)^2} x^\alpha W^\alpha (x^\alpha W^\alpha u),
		\\ u_t & = - \frac{1}{\Gamma(\alpha+1)^2} \riemannDerivative^\alpha (x^{2\alpha} W^\alpha u)+\frac{1}{\Gamma(\alpha+1)}x^\alpha W^\alpha u ,
	\end{aligned}
\end{equation}
for suitable values of $\alpha$, see Section \ref{BlackScholesSection} for more details. The theory of bisectorial operators developed here will allow us to obtain the well-posedness and an explicit integral expressions of solutions of these fractional versions of the Black--Scholes equation, of course by incorporating initial and boundary conditions.  Also, at the limiting case $\alpha=1$, we recover all the classical known results.
%\textcolor{red}{WRITE ABOVE ALL THE FORMULAS FOR Ca, adjoint Ca, Riemann-Liouville and Weyl derivatives,... ALSO CONNECT THE PARAGRAPH BELOW IN A MORE ELEGANT MANNER
	%\begin{align*}
	%	(\mathcal{C}_\alpha f)(x) &= \frac{\alpha}{x^\alpha} \int_0^x (x-y)^{\alpha - 1}f(y) \, dy = \frac{\Gamma(\alpha+1)}{x^\alpha} I^{-\alpha}f (x) 
	%	\\ &= \alpha \int_0^\infty e^{-s} (1-e^{-s})^{\alpha-1} (T(s)f)(x)\, ds,
	%	\\(\mathcal{C}_\alpha^\ast f)(x) &= \alpha \int_x^\infty \frac{(y-x)^{\alpha - 1}}{y^\alpha}f(y) \, dy, = \Gamma(\alpha+1) (W^{-\alpha}(y^{-\alpha}f)) (x) 
	%	\\ &= \alpha \int_{-\infty}^0 (1-e^{s})^{\alpha-1} (T(s)f)(x)\, ds.
	%\end{align*}}

	In order to get these results, we first give a significant extension of the above mentioned results regarding a bisectorial operator $A$ and its square $A^2$.  Indeed, we are able to prove that if $A$ is a bisectorial operator and $g$ is a suitable function whose range is contained in a sector, and which has a fractional-power like behavior at the singularity points of the spectra of $A$ and $g(A)$, then $g(A)$ is a sectorial operator (see Theorem \ref{sectorialTheorem}). 
	%Indeed, we are able to prove the following. Let $A$ be a bisectorial operator, and $g$ a function belonging to the functional calculus of $A$ satisfying that $\mathcal{R}(g)$ is contained in the closure $\overline{S_\beta}$ of a sector $S_\beta$, and some regularity conditions on the singular points of the spectrum of $A$. In particular, if both $d, g(d)$ belong to a singular points of the spectrum of $A$ and $g(A)$, then $|g(z)|$ must have a fractional power-like behaviour at $z = d$.
	As one may expect, the setting for sectorial operators serves again as an inspiration for this result, more precisely the scaling property, i.e., for a sectorial operator $A$, $A^\alpha$ is sectorial for small enough $\alpha>0$, see \cite[Theorem 2]{kato1960note}. We are also able to give supplementary results of special importance when $g(A)$ generates a semigroup, such as the characterization of the closure of its domain $\overline{\mathcal{D}(g(A))}$ or an integral expression for the semigroup it generates in terms of the functional calculus of $A$.
	
	Interestingly, as a consequence of our results one obtains that, if $A$  is bisectorial of angle $\frac{\pi}{2}$ and half-width $a\geq 0$ (in particular, if it generates an exponentially bounded group), then either $(A+a)^\alpha$ or $-(A+a)^\alpha$ generate a holomorphic semigroup for all $\alpha > 0$ with $\alpha \neq 1,3,5,\ldots$ This is a remarkable discovery that generalizes the already known results in this directions for the case $\alpha = 2$, see for example \cite[Theorem 1.15]{arendt1986one} and \cite[Proposition 5.1]{arendt2010decomposing}, or in the case where $A$ generates a bounded group, see \cite[Theorem 4.6]{baeumer2009unbounded}.

	As a final remark, we mention that fractional versions of the Black--Scholes equation have been proposed and analyzed in a number of papers, see for instance \cite{fall2019black, kumar2012analytical, song2013solution, zhang2016numerical}. In most of these references,  authors are only able to deal with time-fractional derivatives, that is, replacing the time derivative $u_t$ in \eqref{BSEquationIntro} with a Riemann-Liouville or a Caputo type time fractional derivative.  Of course in this setting the authors cannot get some generation of semigroups results due to the limitation of time-fractional derivatives. Spatial-fractional derivatives are indeed more difficult to deal with,  since spatial terms are more complex than the time ones in the equation \eqref{BSEquationIntro}. A spatial fractional Black--Scholes model can be found in \cite{chen2014analytically} without given further details. The  fractional Black--Scholes equation we propose here also contains fractional powers acting as multiplication, yielding to equations which are definitely difficult to solve by more classical methods such as the Laplace transform or the Fourier transform. 
	%We have only been able to find similar equations to the ones we propose here in \cite{jumarie2010derivation}, where the author relies on a change of variable that transforms the spatial and time fractional equation into a time fractional equation.  \textcolor{red}{But, let us notice that the techniques used in this reference are doubtful due to several wrong results written by the author in the theory of fractional PDEs (see e.g.  \cite{Jum1,Jum2}). There wrongs results have been mentioned and corrected in \cite{Pe-Li}}.
	Therefore, we observe that the fractional versions of \eqref{BSEquationIntro} proposed in the present paper,  which are solved through the theory we developed here, seem to be notably difficult to be solved with the classical methods.
	%Also as we mentioned above, at the limiting case $\alpha=1$, we recover all the properties of the classical   Black--Scholes equation \eqref{BSEquationIntro}.
	
	In short, the contributions of the present paper can be regarded as centered around two facts:
	\begin{enumerate}
		\item The introduction of new (generalized) fractional Black--Scholes equations arising from fractional Cesàro operators in a natural manner. As we have already observed, such equations are difficult -maybe not possible- to solve by classical methods.
		\item In order to overcome the quoted failure of usual methods, we establish a new connection, in an abstract setting, between bisectorial operators and sectorial operators. Such a connection extends notably previous results in the field. Actually our approach is based on the proof of the scaling property given in \cite[Proposition 5.2]{auscher1997holomorphic}, but it requires quite more general functions to operate in functional calculi defined on the basis of more intricate integration paths, as well as nontrivial, more involved, approximation tools.  This requires more work to be proven than the latter one. This is because we consider more general functions that require working with approximation tools, and moreover our family of functional calculus presents more intricate integration paths.
	\end{enumerate}
	
	The  rest of the paper is organized as follows.  In Section \ref{sect21} we fix some notations and introduce some definitions.  In Section \ref{Sec2} we develop the functional calculus of bisectorial operators and its extensions. Section \ref{sectorialSection} is devoted to the passage from bisectorial to sectorial operators, where we also give a note regarding holomorphic functional calculus for sectorial operators. In Section \ref{Sec4} we give some generation of bounded holomorphic semigroups results by the  general sectional operators we have constructed in Section \ref{sectorialSection}. The complete theory and applications of the generalized Black--Scholes equation  are contained in Section \ref{BlackScholesSection}. We conclude the paper with \ref{Appendix} where some useful results have been proven.

	\section{Notations and preliminary results} \label{notations}
	
	In this section, we fix some notations,  give some definitions and introduce some well-known results as they are used throughout the remainder of the paper.
	We start with some notations and definitions.
	
	\subsection{Notations and definitions}\label{sect21}
	Given any $\varphi\in (0,\pi)$, we denote the sector
	\begin{align}\label{sector}
		S_{\varphi}:=\Big\{z\in\CC: \left|\mbox{arg}(z)\right|<\varphi\Big\},
	\end{align}
	and set $S_0 := (0,\infty)$.  If $\lambda\in\CC$, we shall let $\Re \lambda$ denote the real part of $\lambda$.
	
	%\begin{wrapfigure}{r}{0.52\textwidth} \label{fig1} %this figure will be at the right
	%	\centering
	%	\includegraphics[width=0.52\textwidth]{Sectorial operators.png}
	%	\caption{Illustration of the spectrum of a sectorial operator.}
	%	\label{figSector}
	%\end{wrapfigure}
	
	Let $A$ with domain $\mathcal{D}(A)$ be a closed linear operator in a Banach space $X$. We shall denote by $\sigma(A)$ the spectrum of $A$, $\sigma_p(A)$ the point spectrum of $A$,  and by $\rho(A):=\CC\setminus\sigma(A)$, the resolvent set of $A$.  We also let $\widetilde\sigma(A):=\sigma(A)\cup\{\infty\}$.
	For $\lambda\in\rho(A)$ we shall let $R(\lambda,A):=(\lambda I-A)^{-1}$ be the resolvent operator of $A$. Then,  $R(\lambda,A)$ is a bounded linear operator from $X$ into $X$. By $\mathcal R(A)$, we shall mean the range space of $A$ and $\mathcal N(A)$ shall denote the null set (or the kernel) of the operator $A$.  
	
	We shall denote by ${\mathcal L(X)}$ the space of all bounded linear operators from a Banach space $X$ into $X$.  If $B\in\mathcal L(X)$, we shall let $\|B\|_{\mathcal L(X)}$ be the operator norm of $B$. 
	
	Finally, we use throughout the notation $h\lesssim g$ to denote $h\le M g$, for some constant $M$,  when the dependence of the constant $M$ on some physical parameters is not relevant, and so it is suppressed.
	%Also $\mathcal D(B)$ shall denote the domain of the operator $B$.
	
	%Here is our definition of sectorial operators.

	\begin{definition}
		A closed linear operator $A$ with domain $\mathcal D(A)$ in a Banach space $X$ is said to be a {\bf sectorial operator} if there exists $\varphi \in [0,\pi)$ such that $\sigma(A)\subset \overline{S_{\varphi}}$ and, for every $\varphi' \in (\varphi, \pi)$, there exists a constant $M_{\varphi'}>0$ such that
		\begin{align*}
			\|\lambda R(\lambda,A)\|_{\mathcal L(X)}\le M_{\varphi'}\;\mbox{ for all }\lambda\in \CC\setminus S_{\varphi'}.
		\end{align*}
	\end{definition}

	Next,  for any $\omega \in (0,\pi/2]$ and $a\geq 0$ a real number,  we set 
	%	\begin{equation*}
		%		BS_{\omega,a} :=
		%		\begin{cases}
			%			\left\{ z \in \CC: |\arg (z-a)| > \omega \text{ and } |\arg(z+a)| < \pi - \omega\right\} \;\;&\mbox{ if } \omega\in (0,\pi/2)  \mbox{ and }  a\ge 0\\
			%			\\
			%			\left \{z \in \CC \, : \, |\Re z| < a\right\} &\mbox{ if } \omega=\pi/2 \mbox{ and } a>0\\
			%			\\
			%			i\RR &\mbox{ if } \omega =\pi/2 \mbox{ and } a=0.
			%		\end{cases}
		%	\end{equation*}
	\begin{equation*}
		BS_{\omega,a} :=
		\begin{cases}
			(-a + S_{\pi-\omega}) \cap (a - S_{\pi-\omega}) \;\;&\mbox{ if } \omega\in (0,\pi/2)  \mbox{ and }  a\ge 0,\\
			\\
			i\RR &\mbox{ if } \omega =\pi/2 \mbox{ and } a=0.
		\end{cases}
	\end{equation*}
	
	%For $\omega=\pi/2$ and $a>0$, let $BS_{\omega,a} := \{z \in \CC \, : \, |\Re z| < a\}$. Finally, let $BS_{\pi/2,0} = i \RR$. Therefore, $BS_{\omega,a}$ is defined for all $(\omega,a) \in (0,\pi/2] \times [0,\infty)$. 
	
	\begin{figure}[h]
	%\begin{SCfigure}[0.45][h]
		\centering
		\includegraphics[width=1\textwidth]{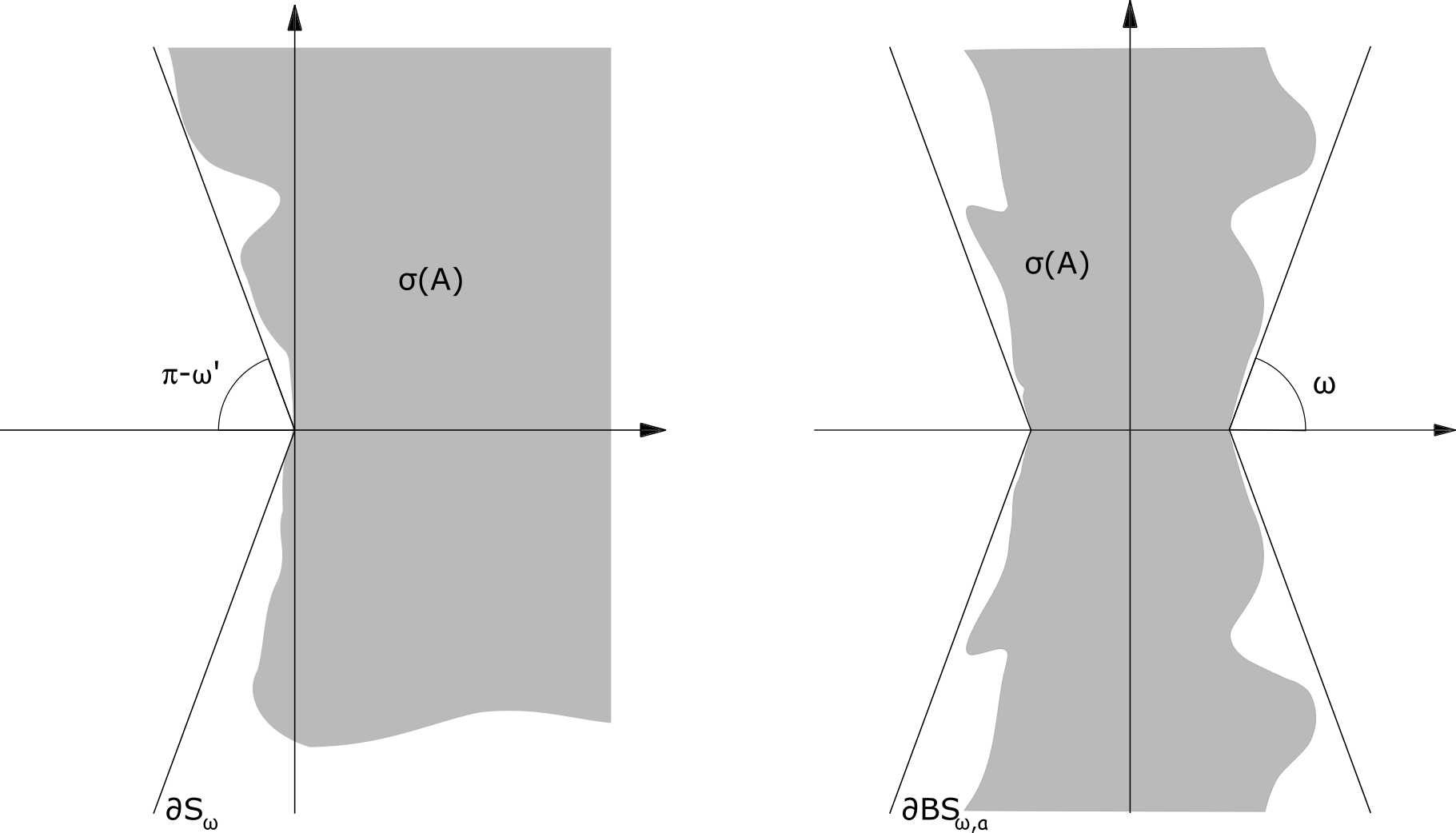}
		\caption{Illustration of the spectrum of a sectorial operator (left) and bisectorial operator (right).}
		\label{fig1}
		%\end{SCfigure}
	\end{figure}
	
	\begin{definition}
		Let $(\omega,a) \in (0,\pi/2] \times [0,\infty)$ and let $A$ be a closed linear operator in a Banach space $X$.  We will say that $A$ is a {\bf bisectorial operator} of angle $\omega$ and half-width $a$ if the following conditions hold:
		\begin{itemize}
			\item $\sigma (A) \subset \overline{BS_{\omega,a}}$.
			\item For all $\omega' \in (0, \omega)$, $A$ satisfies the resolvent bound
			\begin{align*}
				\sup \Big\{ \min\{|\lambda-a|, |\lambda+a|\} \| R(\lambda,A)\|_{\mathcal L(X)} \, : \, \lambda \notin \overline{BS_{\omega',a}} \Big\}< \infty .
			\end{align*}
		\end{itemize}
	\end{definition}

	Given a Banach space $X$, we will denote the set of all bisectorial operators of angle $\omega$ and half-width $a$ in $X$ by $\BSect(\omega,a)$. We omit an explicit mention to $X$ for the sake of simplicity in our notations.  Notice that a closed operator $A \in \BSect(\omega,a)$ if and only if both $aI+A$ and $aI-A$ are sectorial of angle $\pi-\omega$. % (see e.g. \cite{haase2006functional}).

	Next, we introduce the notion of bounded holomorphic semigroups in the sense of  \cite[Definition 3.7.3]{arendt2011vector}.
	
	%\begin{wrapfigure}{r}{0.52\textwidth} \label{fig2} %this figure will be at the right
	%	\centering
	%	\includegraphics[width=0.52\textwidth]{Bisectorial.png}
	%	\caption{Illustration of the spectrum of a bisectorial operator.}
	%	\label{figBisector}
	%\end{wrapfigure}
	
	\begin{definition}\label{semigroupDef}
		Let $\delta \in (0,\pi/2]$.  A mapping $T : S_\delta \to \mathcal{L}(X)$  is called a {\bf bounded holomorphic semigroup} (of angle $\delta$) if it has the following properties:
		\begin{enumerate}[(a)]
			\item The semigroup property:  $T(z)T(z') =T (z + z')$ for all $z,  z' \in S_\delta$.
			\item The mapping $T : S_\delta \to \mathcal{L}(X)$ is holomorphic.
			\item The mapping $T$ satisfies: $\sup_{w\in S_{\delta'}} \|T(w)\|_{\mathcal L(X)}< \infty$ for every $\delta' \in (0, \delta)$.
		\end{enumerate}
	\end{definition}
	
	It is known that every bounded holomorphic semigroup $T$ on a Banach space $X$  is determined by its generator $A$, which is a closed linear operator given by
	\begin{equation}\label{genInf}
		\begin{cases}
			\displaystyle \mathcal D(A)=\left\{x\in\ X: \lim_{t \downarrow 0} \frac{T(t)x - x}{t} \;\mbox{exists}\right\}\\
			\displaystyle  A x = \lim_{t \downarrow 0} \frac{T(t)x - x}{t}, \quad x\in \mathcal D(A).
		\end{cases}
	\end{equation} 
	%and where $\mathcal{D}(B)$ is given by those $x\in X$ for which the above limit exists. 
	
	Another way to define the infinitesimal generator is through its resolvent $R(\lambda, A)$ by using the Laplace transform. Namely,
	\begin{align}\label{resolventOfaSemigroup}
		R(\lambda, A) = \int_0^\infty e^{-\lambda t} T(t) \, dt, \quad \Re \lambda >0.
	\end{align}
	
	For any $\delta \in (0,\pi/2]$, it is well known that a closed linear operator $A$ is the generator of a bounded holomorphic semigroup of angle $\delta$ if and only if $-A$ is a sectorial operator of angle $\pi/2-\delta$. Even more, one has that
	\begin{align*}
		T (w) = \exp_{-w} (A), \quad w \in S_\delta,
	\end{align*}
	where we make use of the primary functional calculus of sectorial operators and where $\exp_{-w}(z):= \exp (-wz)$. 

%Next, we introduce the following definition.
	
	\begin{definition}
For $\delta \in (0,\pi/2]$, we say that a mapping $T : S_\delta \to \mathcal{L}(X)$ is an  {\bf exponentially bounded holomorphic semigroup} if it satisfies the bounded semigroup property, is holomorphic (namely (a) and (b) in Definition \ref{semigroupDef} are satisfied) and it holds that the set $\{\|T(w)\|_{\mathcal L(X)} :w \in S_{\delta'}, \, |w| \leq 1 \}$ is bounded for every $\delta' \in (0,\delta)$. 
		
		In particular, for each $\delta' \in (0,\delta)$ one can find $M_{\delta'}, \rho_{\delta'}\geq 0$ such that $\|T(w)\|_{\mathcal L(X)} \leq M_{\delta'} e^{\rho_{\delta'} \Re w}$ for all $w \in S_{\delta'}$. As a consequence,  one can define the generator $A$ of an exponentially bounded holomorphic semigroup $T$ in the same way as in \eqref{genInf} or \eqref{resolventOfaSemigroup} for all $\lambda$ with $\Re \lambda$ large enough. 
	\end{definition}
	
	It is readily seen that $A$ generates an exponentially bounded holomorphic semigroup if and only if there is some $\rho \in \RR$ such that $A-\rho$ generates a bounded holomorphic semigroup (of possibly strictly smaller angle). 
	\begin{definition}
		The space $\mathbb D_T$ of strong continuity of a (holomorphic) semigroup $T$ of angle $\delta$ is defined by
		\begin{align*}
			\mathbb D_T:= \left\{x\in X:\;	\lim_{S_{\delta'} \ni w \to 0} T(w)x = x \quad \text{for all } \delta' \in (0,\delta)\right\}.
		\end{align*}
	\end{definition}
	
	It is well known that the space  $\mathbb D_T$ is precisely the closure of the domain of its generator, that is,  $\mathbb D_T=\overline{\mathcal{D}(A)}$. We refer the reader to \cite[Section 3.4]{haase2006functional} and \cite[Section II.4.a]{engel2000one} for more details about holomorphic semigroups. 
	
	%Next, we introduce the notion of $(L^1-L^\infty)$-interpolation spaces.  For a Borel measurable function $f:(0,\infty)\to\CC$ the distribution function is defined by $d_{|f|}(\lambda)=m\{t\in (0,\infty): |f(t)|>\lambda\}$ for $\lambda>0$, where $dm$ denotes the Lebesgue measure.  We let $\mathcal S_0(0,\infty):=\{f:(0,\infty)\to\CC:\;d_{|f|}(\lambda)<\infty \text{ for some } \lambda > 0\}$. For $f\in \mathcal S_0(0,\infty)$ we define
	%$$f^\star(t):=\inf\{\lambda>0:\; d_{|f|}(\lambda)\le t\}\;\mbox{ for } t>0.$$
	%Then $f^\star: (0,\infty)\to (0,\infty)$ is decreasing, right-continuous and equimeasurable with $|f|$ (that is,  $d_{f^\star}=d_{|f|}$). The function $f^\star$ is called the decreasing rearrangement of $|f|$.
	
	%Given $f,g\in \mathcal S_0(0,\infty)$, we say that $g$ is submajorized by $f$ is
	%\begin{align}\label{subm}
	%\int_0^tg^\star(s)\;ds\le \int_0^tf^\star(s)\;ds \;\mbox{ for all } t>0.
	%\end{align} 
	%Suppose that $E$ is a linear subspace of $\mathcal S_0(0,\infty)$, which is a Banach space with respect to the norm $\|\cdot\|_E$. Then the Banach space $(E,\|\cdot\|_E)$, with $E\subset (L^1+L^\infty)(0,\infty)$ is {\bf an exact $(L^1-L^\infty)$-interpolation space} if and only if,
	%\begin{align*}
	%f\in E, g\in  \mathcal S_0(0,\infty) \mbox{ and } f,g \mbox{ satisfy } \eqref{subm} \mbox{ imply that } g\in E \mbox{ and } \|g\|_E\le\|f\|_E. 
	%\end{align*}
	
	Next, we introduce the notion of $(L^1-L^\infty)$-interpolation spaces. Let $E$ be a Banach space consisting of functions with domain $(0,\infty)$, for which the inclusions $(L^1(0,\infty) \cap L^\infty(0,\infty)) \subset E \subset (L^1(0,\infty) + L^\infty(0,\infty))$ hold and are continuous. We will say that $E$ is a {\bf$(L^1-L^\infty)$-interpolation space} if, for every linear operator $S: (L^1(0,\infty) + L^\infty(0,\infty)) \to (L^1(0,\infty) + L^\infty(0,\infty))$ that restricts to bounded operators $S|_{L^1(0,\infty)}:L^1(0,\infty) \to L^1(0,\infty), \, S|_{L^\infty(0,\infty)}: L^\infty(0,\infty) \to L^\infty(0,\infty)$, then it holds that the restriction to $E$, $S|_E: E\to E$, is well defined and bounded. 
	%Moreover, if for every $T$ like above, we have that $\|T|_E\| \leq \max\{\|T|_{L^1}\|, \|T|_{L^\infty}\|\}$, we will say that $E$ is an {\bf exact $(L^1-L^\infty)$-interpolation space}. It is known that every $(L^1-L^\infty)$ space can be equivalently renormed so it becomes exact, see \cite[Proposition III.1.13]{bennett1988interpolation}.
	This class includes many of the classical function spaces (e.g.  $L^p$-spaces,  Orlicz spaces, Lorenz spaces, Marcinkiewiecz spaces). Also, $E$ is said to have an {\bf order continuous norm} if $\|f_n\|_E \to 0$ for every sequence of functions $f_n \in E$ converging to $0$ almost everywhere and for which $|f_n|$ is non-increasing. For more details about $(L^1-L^\infty)$-interpolation spaces, we refer to the monograph \cite{bennett1988interpolation}.

	We conclude this section by recalling the definition of the Riemann-Liouville and Weyl fractional integrals and derivatives. 
	%Let $\mathcal{S}$ be the Schwartz space on $(0,\infty)$, i.e. the space of functions $f$ in $C^{(\infty}(0,\infty)$ for which $|f^{(n)}(x)| \lesssim x^{-m}$ for all $n,m \in \NN$.
	Let $\alpha > 0$ be a real number  and $f$ a suitable function defined on $(0,\infty)$. The Riemann-Liouville fractional integral of order $\alpha$ of $f$,  denoted $\riemannDerivative^{-\alpha}f$, and the Weyl fractional integral of order $\alpha$ of $f$,  denoted $W^{-\alpha}f$, are respectively given by
	\begin{align}\label{RLFI}
		\riemannDerivative^{-\alpha}f (x) := \frac{1}{\Gamma(\alpha)} \int_0^x (x-y)^{\alpha-1} f(y) \, dy, \quad x > 0,
	\end{align}
	and
	\begin{align}\label{WFI}
		W^{-\alpha}f (x) := \frac{1}{\Gamma(\alpha)} \int_x^\infty (y-x)^{\alpha-1} f(y) \, dy, \quad x > 0,
	\end{align}
	where $\Gamma$ denotes the usual Euler-Gamma function.
	
	The Riemann-Liouville fractional derivative of order $\alpha$ of $f$, denoted $D^{\alpha}f$, and the Weyl fractional derivative of order $\alpha$ of $f$, denoted $W^\alpha f$, are respectively given by
	\begin{align}\label{RLFD}
		\riemannDerivative^\alpha f(x) := \frac{d^n}{dx^n} D^{-(n-\alpha)}f(x), \quad x > 0,
	\end{align}
	and
	\begin{align}\label{WFD}
		W^\alpha f(x) := (-1)^n\frac{d^n}{dx^n} W^{-(n-\alpha)}f(x), \quad x > 0,
	\end{align}
	where $n$ is the smallest integer greater than or equal to $\alpha$.  We refer to the monograph  \cite{samko1993fractional} and the references therein for the class of functions for which the above expressions  \eqref{RLFI}-\eqref{WFD} exist.
	
	%Next for any $s \in \RR$, let $m^s$ denote the multiplication operator by $x^s$. We shall let
	%\begin{align*}
	%	\mathcal{D}^{-\alpha} &:= \Gamma(\alpha +1) m^{-\alpha} I^{-\alpha}, \quad &\mathcal{D}^\alpha = (\mathcal{D}^{-\alpha})^{-1},
	%	\\ \mathcal{W}^{-\alpha} &:= \Gamma(\alpha +1)  W^{-\alpha} m^{-\alpha}, \quad &\mathcal{W}^\alpha = (\mathcal{W}^{-\alpha})^{-1}.
	%\end{align*}
	%Note that these operators are well defined due to the fact that the operators $I^{-\alpha}, W^{-\alpha}$ and $m^{-\alpha}$ and injective. 

	%\section{Functional calculus for bisectorial operators and its extension}\label{Sec2}
	
	%\textcolor{red}{Include two sentences to describe the main topics of the present section.}

	\subsection{The natural functional calculus (NFC) for bisectorial operators}\label{Sec2}

	Next, we proceed to develop a functional calculus for bisectorial operators which is analogous to the functional calculus for sectorial operators presented in \cite{haase2006functional}. This functional calculus for bisectorial operators will extend the ones considered in \cite{morris2010local, schweiker2000asymptotics} and the references therein. 
	
	For any $(\varphi,a) \in (0,\pi/2] \times [0,\infty)$, we have the algebra of holomorphic functions
	\begin{align*}
		\EOVarphi:= &\Bigg\{f \in H^\infty (BS_{\varphi,a}) \, : \, \int_{\partial (BS_{\omega', a})} \frac{|f(z)|}{\min\{|z-a|,|z+a|\}} |dz| < \infty, 
		\\ & \hfill\qquad\qquad\qquad\qquad\qquad \text{for all } \varphi< \omega' \leq \pi/2 \,\text{ and } \lim_{z \to  -a, a, \infty} f(z) = 0 \Bigg\},
	\end{align*}
	where $H^\infty (\Omega)$ denotes the algebra of bounded holomorphic functions in an open subset $\Omega \subset \CC$. It is readily seen that $\EOVarphi$ is an ideal of $H^\infty (BS_{\varphi,a})$. 
	
	Given a bisectorial operator $A \in \BSect (\omega,a)$ for some $\omega > \varphi$, we define the algebraic homomorphism $\Phi: \EOVarphi \to \mathcal{L}(X)$ given by
	\begin{align*}
		\Phi(f) := f(A) := \frac{1}{2\pi i} \int_\Gamma f(z) R(z,A)\, dz, 
	\end{align*}
	where $\Gamma$ is the positively oriented boundary of the  bisector $BS_{\omega',a}$ for any $\omega' \in (\varphi, \omega)$ (see Figure 1). It is readily seen that $f(A)$ is well defined and does not depend on the choice of $\omega'$. Moreover, one obtains the following result.
	
	\begin{lemma}
		The mapping $\Phi: \EOVarphi \to \mathcal{L}(X)$ satisfies the following properties:
		\begin{enumerate}[(a)]
			\item $\Phi$ is a homomorphism of algebras.
			\item If $T\in \mathcal{L}(X)$ commutes with the resolvent operator $R(\lambda,A)$ of $A$, then it also commutes with each operator $\Phi(f)$, where $f \in \EOVarphi$.
			\item For $\lambda \notin \overline{BS_{\varphi,a}}$ and $f \in \EOVarphi$, one has
			\begin{align*}
				R(\lambda, A) \Phi(f) = \Phi(f) R(\lambda, A) = \left( \frac{f(z)}{\lambda -z}\right)(A).
			\end{align*}
			\item  If
			$$f(z)=\frac{z}{(\lambda -z)(\mu - z)}\;\mbox{ for }  \lambda, \mu \notin \overline{BS_{\varphi,a}},$$
			then $f \in \EOVarphi$ and
			\begin{align*}
				f(A) = A R(\lambda, A) R(\mu, A) \quad \text{for } \lambda, \mu \notin \overline{BS_{\varphi,a}}.
			\end{align*}
		\end{enumerate}
	\end{lemma}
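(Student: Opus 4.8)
The plan is to establish the four properties of the functional calculus homomorphism $\Phi$ by leveraging the Cauchy integral formula together with the resolvent identity, following the standard template for functional calculi of sectorial operators but adapted to the bisectorial integration path. Throughout I would fix $\omega' \in (\varphi, \omega)$ and let $\Gamma = \partial(BS_{\omega',a})$ denote the positively oriented boundary, noting that the integrand $f(z) R(z,A)$ is holomorphic between two such contours, so Cauchy's theorem licenses deforming $\Gamma$ freely within the strip $(\varphi,\omega)$; this independence of $\omega'$ is the geometric fact underlying every subsequent manipulation. For convergence, I would observe that the defining integrability condition on $\EOVarphi$ combined with the resolvent bound $\min\{|z-a|,|z+a|\}\,\|R(z,A)\|_{\mathcal L(X)} \lesssim 1$ guarantees $\int_\Gamma |f(z)|\,\|R(z,A)\|_{\mathcal L(X)}\,|dz| < \infty$, so $f(A)$ is a genuine bounded operator.

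\emph{For (a)}, to show $\Phi(fg) = \Phi(f)\Phi(g)$, I would write the product $f(A)g(A)$ as a double contour integral over two nested contours $\Gamma_1$ (inside) and $\Gamma_2$ (outside), both boundaries of bisectors at slightly different angles $\omega'_1 < \omega'_2$. Applying the resolvent identity $R(z,A)R(w,A) = \frac{R(z,A) - R(w,A)}{w - z}$ splits the integrand, and then one integrates in each variable separately. Here the key point is that for $z$ on the inner contour, the Cauchy integral $\frac{1}{2\pi i}\int_{\Gamma_2} \frac{g(w)}{w - z}\,dw = g(z)$ (since $z$ lies inside $\Gamma_2$), while the symmetric term integrates to zero because $w$ lies outside $\Gamma_1$; the decay condition $\lim_{z \to -a,a,\infty} f(z) = 0$ and boundedness ensure the contributions near the three singular points $-a, a, \infty$ vanish. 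Linearity of $\Phi$ is immediate from linearity of the integral.

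\emph{For (b)}, if $T$ commutes with every $R(\lambda,A)$, then since $\Phi(f)$ is defined as a norm-convergent integral of scalar multiples of resolvents, $T$ passes through the integral and commutes with $\Phi(f)$. \emph{For (c)}, with $\lambda \notin \overline{BS_{\varphi,a}}$ fixed, the function $z \mapsto \frac{f(z)}{\lambda - z}$ belongs to $\EOVarphi$ (it inherits boundedness, the decay at $-a,a,\infty$, and the integrability, since $\lambda$ is bounded away from the contour). Writing $R(\lambda,A)\Phi(f)$ as a contour integral and applying the resolvent identity $R(\lambda,A)R(z,A) = \frac{R(\lambda,A) - R(z,A)}{z - \lambda}$, the $R(\lambda,A)$ term integrates to zero (as $\lambda$ is outside $\Gamma$, the Cauchy integral of $\frac{f(z)}{z-\lambda}$ vanishes by the decay hypothesis), leaving precisely $\left(\frac{f(z)}{\lambda - z}\right)(A)$; commutativity with $R(\lambda,A)$ on the other side is symmetric.

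\emph{For (d)}, I would first verify directly that $f(z) = \frac{z}{(\lambda-z)(\mu-z)}$ lies in $\EOVarphi$: it is holomorphic and bounded on $BS_{\varphi,a}$ since $\lambda,\mu$ are off the closed bisector, it decays like $|z|^{-1}$ at $\infty$ and is finite at $\pm a$ (so the limits at $-a, a, \infty$ vanish as required, the $\infty$ limit being the crucial one), and the $|z|^{-1}$ decay secures integrability against $\min\{|z-a|,|z+a|\}^{-1}|dz|$. To identify $f(A)$, I would apply part (c) twice (or compute the contour integral directly using the partial-fraction-type identity $\frac{z}{(\lambda-z)(\mu-z)} R(z,A)$) together with the fact that $A R(\lambda,A) = \lambda R(\lambda,A) - I$ on the appropriate domain; the contour integral of the $-I$ piece vanishes because the remaining scalar integrand is holomorphic and decaying, yielding $A R(\lambda,A)R(\mu,A)$. \textbf{The main obstacle} I anticipate is the bookkeeping of vanishing boundary contributions at the three singular points $-a$, $a$, and $\infty$ of the bisector: unlike the single-sector case, the bisectorial contour has two finite "corners" at $\pm a$ as well as two branches going to infinity, so each Cauchy-theorem and residue argument must carefully justify that small arcs around $\pm a$ and large arcs at infinity contribute nothing, which is exactly what the definition of $\EOVarphi$ (simultaneous decay at $-a, a, \infty$ plus the weighted integrability) was engineered to guarantee.
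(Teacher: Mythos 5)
Your overall strategy --- Cauchy's theorem plus the resolvent identity, with nested bisectorial contours for multiplicativity and the weighted integrability condition of $\EOVarphi$ controlling the contributions near $-a$, $a$, $\infty$ --- is exactly the argument the paper has in mind; its proof consists of one sentence citing Cauchy's theorem, the resolvent identity, and the analogous sectorial result in \cite[Proposition 2.2]{haase2005general}, and parts (a)--(c) of your write-up are a correct expansion of that. (One cosmetic remark: since larger angle means a \emph{smaller} bisector, the contour $\partial(BS_{\omega_2',a})$ with $\omega_1'<\omega_2'$ is the \emph{inner} one, so your labels for which contour encloses which should be swapped; the mechanism of the computation is unaffected.)

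There is, however, a genuine slip in your treatment of (d). You write that $f(z)=\frac{z}{(\lambda-z)(\mu-z)}$ ``is finite at $\pm a$ (so the limits at $-a,a,\infty$ vanish as required).'' Finiteness of the limit is not the same as the limit being $0$: for $a>0$ one has $f(\pm a)=\frac{\pm a}{(\lambda\mp a)(\mu\mp a)}\neq 0$ in general, so the condition $\lim_{z\to -a,a}f(z)=0$ in the definition of $\EOVarphi$ fails, and so does the weighted integrability, since $|f(z)|/\min\{|z-a|,|z+a|\}\sim c/|z-a|$ near $z=a$ with $c\neq 0$. Your verification is therefore only valid when $a=0$ (where $f(0)=0$ and the $|z|^{-1}$ decay at infinity does the rest). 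For $a>0$ the membership claim as literally stated needs repair --- e.g.\ one should work with a function vanishing at all three singular points, such as $\frac{z(z^2-a^2)}{(\lambda-z)(\mu-z)(\nu-z)^2}$, or subtract from $f$ the elementary rational functions matching its values at $\pm a$ (as is done later in the paper, cf.\ the functions $\frac{b^2-a^2}{2a}\frac{a\pm z}{b^2-z^2}$ in the proof of Proposition \ref{existenceProposition}) and treat the correction terms separately. This is arguably an imprecision inherited from the statement itself, but your parenthetical papers over it with an invalid inference rather than addressing it, so the step as written would fail.
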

	
	\begin{proof}
		The lemma is an immediate consequence of straightforward applications of  Cauchy's theorem and the resolvent identity. See \cite[Proposition 2.2]{haase2005general} for an analogous result about sectorial operators.
	\end{proof}
	
	Next,  we add some functions in order to make our functional calculus to contain certain class of resolvents. Set
	\begin{align*}
		\mathcal{E} (BS_{\varphi,a}) := \EOVarphi \oplus \CC \frac{1}{b+z} \oplus \CC \frac{1}{b-z}\;\mbox{ for any } b \in \CC \backslash \overline{BS_{\varphi,a}}. 
	\end{align*}
	It is easy to see that the definition of $\mathcal{E} (BS_{\varphi,a})$ is independent on the particular choice of $b$.
	
	Then, one can extend $\Phi$ from $\mathcal{E}_0(BS_{\varphi,a})$ to $\mathcal{E} (BS_{\varphi,a})$ by defining 
	$$\Phi\left(\frac{1}{b+z}\right) = -R(-b,A)\mbox{ and } \Phi\left(\frac{1}{b-z}\right) = R(b,A),$$
	so that $\Phi : \mathcal{E}(BS_{\varphi,a}) \to \mathcal{L}(X)$ is an algebraic homomorphism. Moreover, $(\mathcal{E} (BS_{\varphi,a}), \mathcal{M}(BS_{\varphi,a}), \Phi)$ is an abstract functional calculus (see e.g.  \cite{haase2005general}), where $\mathcal{M}(\Omega)$ denoted the algebra of meromorphic functions with domain an open set $\Omega \subset \CC$. 
	%\textcolor{red}{Actually, we will only work with holomorphic functions. This is because if $f$ has any pole, then its image cannot be contain in a proper sector of $\CC$.}
	
	\begin{definition}
		Let $(\varphi,a) \in (0,\pi/2] \times [0,\infty)$. We introduce the following notions.
		\begin{enumerate}[(a)]
			\item A function $f \in \mathcal{M}(BS_{\varphi,a})$ is said to be regularizable by $\mathcal{E} (BS_{\varphi,a})$ if there exists $e \in \mathcal{E}(BS_{\varphi,a})$ such that $e(A)$ is injective and $ef \in \mathcal{E}(BS_{\varphi,a})$. 
			
			\item For any regularizable $f \in \mathcal{M}(BS_{\varphi,a})$ we set
			\begin{align*}
				f(A):= e(A)^{-1} (ef)(A).
			\end{align*}
		\end{enumerate}
	\end{definition}
	
	By \cite[Lemma 3.2]{haase2005general}, one has that this definition is independent of the regularizer $e$, and that $f(A)$ is a well defined closed operator. 
	
	Finally,  we set 
	\begin{align*}
		\mathcal{M}(BS_{\varphi,a})_A &:= \{ f \in \mathcal{M}(BS_{\varphi,a}) \, : \, f \text{ is regularizable by } \mathcal{E}(BS_{\varphi,a})\},
	\end{align*}
	and
	\begin{align*}
		H(A) &:= \{f \in \mathcal{M}(BS_{\varphi,a})_A \, : \, f(A) \in \mathcal{L}(X)\}.
	\end{align*}
	
	This meromorphic functional calculus satisfies the following properties.
	%\textcolor{red}{We may don't need all these results. However, it seems to me that is good to have them stated.}
	\begin{lemma}\label{functionalCalculusProperties}
		Let $(\varphi,a) \in (0,\pi/2] \times [0,\infty)$ and let also $f \in \mathcal{M}(BS_{\varphi,a})_A$ with $A \in \BSect(\omega,a)$. Then the following assertions hold.
		\begin{enumerate}[(a)]
			\item If $S\in \mathcal{L}(X)$ commutes with $A$, that is, $SA \subset AS$, then $S$ also commutes with $f(A)$, i.e. $S f(A) \subset f(A)S$.
			\item $\mathbf{1}(A) = I$ and $z(A) = A$.
			\item Let $g \in \mathcal{M}(BS_{\varphi,a})_A$. Then
			\begin{align}\label{fgA}
				f(A) + g(A) \subset (f+g)(A),\;\mbox{ and }\; f(A) g(A) \subset (f g)(A).
			\end{align}
			Furthermore, $\mathcal{D}(f(A)g(A)) = \mathcal{D}((fg)(A)) \cap \mathcal{D}(g(A))$, and one has equality in \eqref{fgA} if $g(A) \in \mathcal{L}(X)$.
			\item $\Phi : H(A) \to \mathcal{L}(X)$ is a homomorphism of unital algebras.
			\item Let $g \in H(A)$ such that $g(A)$ is bounded and injective. Then $f(A) = g(A)^{-1} f(A) g(A)$.
			\item Let $\lambda \in \CC$. Then 
			\begin{align*}
				\frac{1}{\lambda - f(z)} \in \mathcal{M}(BS_{\varphi,a})_A \iff \lambda - f(A) \text{ is injective}.
			\end{align*}
			If this is the case, $(\lambda-f(z))^{-1}(A) = (\lambda -f(A))^{-1}$. In particular, $\lambda \in \rho(A)$ if and only if $(\lambda - f(z))^{-1} \in H(A)$.
		\end{enumerate}
	\end{lemma}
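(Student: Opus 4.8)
The plan is to recognise that, by the construction carried out just above, the triple $(\mathcal{E}(BS_{\varphi,a}), \mathcal{M}(BS_{\varphi,a}), \Phi)$ is an abstract functional calculus in the sense of \cite{haase2005general}. Consequently, assertions (a), (c), (d), (e) and (f) are not special to the bisectorial geometry; they are instances of the general properties of such calculi and follow from the corresponding statements in \cite{haase2005general} (commutation with operators commuting with the resolvents, the sum and product inclusions together with the domain identity, the homomorphism property on $H(A)$, the regularization/conjugation identity, and the inverse rule for $(\lambda - f)^{-1}$). The only point that genuinely uses the concrete definition of $\Phi$ — and hence the only one I would prove by hand — is assertion (b), namely $\mathbf{1}(A) = I$ and $z(A) = A$, since these are precisely what identifies the abstract calculus with the operator $A$ itself.

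For (b), I would fix $b \in \CC \setminus \overline{BS_{\varphi,a}}$ and take as regularizer
\[
e(z) := \frac{1}{b^2 - z^2} = \frac{1}{2b}\left(\frac{1}{b-z} + \frac{1}{b+z}\right) \in \mathcal{E}(BS_{\varphi,a}).
\]
Using the definition of $\Phi$ on the two adjoined resolvent functions together with the resolvent identity $R(b,A) - R(-b,A) = -2b\,R(b,A)R(-b,A)$, one computes $e(A) = -R(b,A)R(-b,A)$, a product of two injective operators and hence injective, so $e$ is admissible. Since $e \cdot \mathbf{1} = e$, we get $\mathbf{1}(A) = e(A)^{-1} e(A) = I$ at once. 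For the coordinate function, note that although $z \notin \mathcal{E}(BS_{\varphi,a})$, the product
\[
e(z)\, z = \frac{z}{b^2 - z^2} = \frac{1}{2}\left(\frac{1}{b-z} - \frac{1}{b+z}\right) \in \mathcal{E}(BS_{\varphi,a}),
\]
so $z$ is regularizable by $e$ and $(ez)(A) = \tfrac{1}{2}\big(R(b,A) + R(-b,A)\big)$.

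It then remains to verify $(ez)(A) = e(A)\,A$ on $\mathcal{D}(A)$, for then $z(A) = e(A)^{-1}(ez)(A) = A$. Using $A R(\lambda,A) = \lambda R(\lambda,A) - I$ and the same resolvent identity, a short computation gives, for $x \in \mathcal{D}(A)$,
\[
e(A) A x = -R(b,A)R(-b,A)\,A x = \tfrac{1}{2}\big(R(b,A) + R(-b,A)\big) x = (ez)(A)x ,
\]
while the equality of domains $\mathcal{D}(z(A)) = \mathcal{D}(A)$ follows from the injectivity of $e(A)$, the identity $e(A)^{-1} = b^2 - A^2$ on $\mathcal{D}(A^2)$, and the closedness of $z(A)$.

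With (b) in hand, the remaining items are quick: (d) combines the unitality $\mathbf{1}(A) = I$ with the product rule of (c), where equality holds because every $f \in H(A)$ has $f(A) \in \mathcal{L}(X)$; (e) follows from (a) applied to $S = g(A)$, which is bounded and commutes with all resolvents of $A$, the conjugation identity being the injective bounded case of the regularization formula; and (f) is the abstract inverse rule, together with the observation that boundedness of $(\lambda - f(A))^{-1}$ is exactly membership of $(\lambda - f(z))^{-1}$ in $H(A)$. The main obstacle I anticipate is bookkeeping rather than conceptual: carrying out the resolvent-identity computation in (b) cleanly and checking the domain equality there, and — before invoking \cite{haase2005general} for the other items — confirming that the concrete integration path bounding $BS_{\omega',a}$ and the two adjoined resolvent functions genuinely satisfy the axioms of an abstract functional calculus, so that the abstract machinery transfers to our bisectorial setting without change.
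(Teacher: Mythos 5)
Your proposal is correct and follows essentially the same route as the paper, whose entire proof is a citation of the abstract functional calculus framework in Section 3 of \cite{haase2005general}, exactly the machinery you invoke for (a), (c), (d), (e) and (f). Your additional hand verification of (b) via the regularizer $e(z) = (b^2-z^2)^{-1}$ is accurate --- the identities $e(A) = -R(b,A)R(-b,A)$, $(ez)(A) = \tfrac{1}{2}\bigl(R(b,A)+R(-b,A)\bigr)$ and the resolvent-identity computation of $e(A)Ax$ all check out --- and merely makes explicit what the paper subsumes in that citation.
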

	\begin{proof}
		The lemma is a direct consequence of the results contained in \cite[ Section 3 ]{haase2005general}.
	\end{proof}
	
	Since the inclusions $\mathcal{E}(BS_{\varphi,a}) \subset \mathcal{E}(BS_{\varphi',a}), \, \mathcal{M}(BS_{\varphi,a}) \subset \mathcal{M}(BS_{\varphi',a})$ hold for any $\varphi < \varphi' < \omega$, we can form the inductive limits
	\begin{align*}
		\mathcal{E}[BS_{\omega,a}] := \bigcup_{0<\varphi<\omega} \mathcal{E}(BS_{\varphi,a}), \quad \mathcal{M}[BS_{\omega,a}] := \bigcup_{0<\varphi<\omega} \mathcal{M}(BS_{\varphi,a}), \quad
		\mathcal{M}[BS_{\omega,a}]_A := \bigcup_{0<\varphi<\omega} \mathcal{M}(BS_{\varphi,a})_A.
	\end{align*}
	If $f\in  \MOmega_A$, that is, if $f$ is regularizable by $\EOmega$, then we say that ``$f(A)$ is defined by the natural functional calculus {\bf(NFC)} for bisectorial operators''.
	
	\subsection{Extensions of the natural functional calculus (NFC)}\label{extensionCalculusSubsection}
	
	Let $d \in \{-a,a\}$ with $a>0$ and consider an operator $A\in \BSect(\omega,a)$ for which $dI-A$ is invertible, that is, $d \notin \sigma(A)$. Fix $0<\varphi<\omega$. Then, there is an $\varepsilon \in (0,a)$ such that the ball $\overline{B_\varepsilon(d)} \subset \rho (A)$, where $B_\varepsilon (d) := \{ z \in \CC: \, |z-d| < \varepsilon\}$.  Consider the algebra
	\begin{align*}
		&\mathcal{E}_0 (BS_{\varphi,a,d,\varepsilon}) :=\Bigg\{ f\in H^\infty (BS_{\varphi,a}  \backslash \overline{B_\varepsilon(d)}): \, \lim_{z\to -d,\infty} f(z) = 0 \text{ and} 
		\\ & \qquad  \qquad \qquad \qquad\int_{\partial (BS_{\omega',a}  \backslash\overline{B_\varepsilon(d)})} \frac{|f(z)| }{|d-z|} \, |dz| < \infty \quad \text{ for all } \varphi < \omega' \leq \frac{\pi}{2} \Bigg\},
	\end{align*}
	and set $\mathcal{E} (BS_{\varphi,a,d, \varepsilon}) := \mathcal{E}_0 (BS_{\varphi,a,d,\varepsilon}) \oplus \CC(1/(b-z))$ for any $b>a$. One can extend our elementary functional calculus $\Phi$ from $\mathcal{E}(BS_{\varphi,a})$ to the algebra $\mathcal{E} (BS_{\varphi,a,d,\varepsilon})$ by integrating on a positively oriented parametrization of the boundary $\partial (BS_{\omega',a}  \backslash\overline{B_\varepsilon(d)})$ for any $ \varphi < \omega' \leq \frac{\pi}{2}$, where we avoid integration near the point $d$ and therefore the functions in this new algebra do not need to satisfy any regularity conditions near $d$. This yields a new abstract functional calculus $(\mathcal{E}(BS_{\varphi,a,d,\varepsilon}), \mathcal{M}(BS_{\varphi,a}), \Phi)$ which is an extension of the former. 
	
	Since the new algebra $\mathcal{E} (BS_{\varphi,a,d,\varepsilon})$ is larger than $\mathcal{E}(BS_{\varphi,a})$, more functions $f \in \mathcal{M}(BS_{\varphi,a})$ become regularizable.  If $f$ is regularizable by $\mathcal{E}(BS_{\varphi,a,d,\varepsilon})$, we say that {\bf $f(A)$ is defined by the NFC for $d-$invertible operators}. 
	
	A similar extension is possible when $A\in \BSect(\omega,a)$ is bounded. Again, let $0< \varphi < \omega$ and let $R > \max\{a, r(A)\}$, where $r(A)$ denotes the spectral radius of $A$. Then one considers the algebra
	\begin{align*}
		&\Bigg\{f \in H^\infty (BS_{\varphi,a} \cap B_R(0)):  \, \lim_{z\to -a} f(z)= \lim_{z\to a} f(z)= 0 \mbox{ and}\\
		& \qquad \int_{\partial BS_{\omega',a}, |z|<R/2} \frac{|f(z)|}{\min\{|z-a|,|z+a|\}} |dz| < \infty \text{ for all } \varphi < \omega' \leq \frac{\pi}{2} \Bigg\},
	\end{align*}
	and adds the spaces $\CC (1/(b-z))$ and $\CC (1/(b+z))$ for any $b>a$. The elementary calculus of this algebra is constructed along the lines of the former elementary calculus. If a function $f$ is regularizable by this algebra,  {\bf we say that $f(A)$ is defined by the NFC for bounded bisectorial operators}.
	
	Similar remarks apply when $A \in \BSect(\omega,a)$ satisfies any combination of the above properties, avoiding integration near the appropriate selection of the points in $\{-a,a,\infty\}$. A minor difference appears if $a = 0$ and $A$ is invertible: in that case the two top and bottom branches merge without further consequences. Clearly, Lemma \ref{functionalCalculusProperties} remains valid when it is adapted appropriately to any of this NFC.

	\begin{definition}
		Let $A\in \BSect(\omega,a)$ and set $M_A := \{-a,a,\infty\} \cap \widetilde{\sigma}(A)$. From now on, we will denote by $\mathcal{M}[\Omega_A]$ the class of meromorphic functions defined on an open set related to the larger primary functional calculus of $A$. Moreover, $\mathcal{M}_A$ will denote the class of regularizable functions $\mathcal{E}_A$ of the larger natural functional calculus of $A$. For instance, if $M_A = \{-a, \infty\}$, then $\mathcal{M}[\Omega_A]$ and $\mathcal{M}_A$ will refer to the calculus of $a-$invertible bisectorial operators; if $M_A = \{-a\}$, then they will refer to the calculus of $a-$invertible and bounded bisectorial operators, and so on.
		
Finally, if $f\in  \mathcal{M}_A$, that is, if $f$ is regularizable by $\mathcal{E}_A$ for the appropriate primary functional calculus of $A$, then we say that $f(A)$ is defined by the natural functional calculus (NFC) of $A$.
	\end{definition}
	
Next, our goal is to give a sufficient condition for a meromorphic function $f \in \mathcal{M}(BS_{\varphi,a})$ to be defined by the NFC of $A$. We will ask $f$ to satisfy a regularity property near the singular points $M_A:=\{-a,a,\infty\} \cap \widetilde{\sigma}(A)$.
	
	\begin{definition}
		Let $A \in \BSect(\omega,a)$, $f\in \mathcal{M}[\Omega_A]$ and $d \in M_A\cap\{-a,a\}$. 
		\begin{enumerate}[(a)]	
			\item 	We say that $f$ is regular at $d$ if $\lim_{z \to d} f(z) =: c_d \in \CC$ exists and, for small enough $\varepsilon > 0$ and some $\varphi< \omega$,
			\begin{align*}
				\int_{\partial (BS_{\omega',a} \cap B_\varepsilon(d))}  \left|\frac{f(z)-c_d}{z-d}\right| |dz| < \infty, \quad \text{for all } \omega' \in \left(\varphi, \frac{\pi}{2}\right).
			\end{align*}
			\item 	Similarly, if $\infty \in M_A$, we say that $f$ is regular at $\infty$ if $\lim_{z\to \infty} f(z) =: c_\infty \in \CC$ exists and, for large enough $R>0$,  and some $\varphi< \omega$,
			\begin{align*}
				\int_{\partial BS_{\omega',a}, |z| > R}  \left|\frac{f(z)-c}{z}\right| |dz| < \infty, \quad \text{for all } \omega' \in \left(\varphi, \frac{\pi}{2}\right).
			\end{align*}
			\item 	We say that $f$ is quasi-regular at $d\in M_A$ if $f$ or $1/f$ is regular at $d$. 
			
			\item Finally, we say that $f$ is (quasi)-regular at $M_A$ if $f$ is (quasi)-regular at each point of $M_A$.
		\end{enumerate}
	\end{definition}
	
\begin{remark}
	Note that if $f$ is regular at $M_A$ with every limit being not equal to $0$, then $1/f$ is also regular at $M_A$. If $f$ is quasi-regular at $M_A$, then $\mu-f$ and $1/f$ are also quasi-regular at $M_A$ for each $\mu \in \CC$. A function $f$ which is quasi-regular at $M_A$ has well defined limits in $\CC_\infty$ as $z$ tends to each point of $M_A$.
\end{remark}

	\begin{lemma}\label{differentNFCLemma}
		Let $A\in \BSect(\omega,a)$ and $f\in \mathcal{M}[\Omega_A]$. Assume that $f$ is regular at $M_A$ and that all the poles of $f$ are contained in $\CC \backslash \sigma_p(A)$. Then, $f(A)$ is defined by the NFC of $A$, that is, $f \in \mathcal{M}_A$. More precisely, the following assertions hold true.
		\begin{enumerate}[(a)]
			\item If $M_A = \{-a,a,\infty\}$,  then $f(A)$ is defined by the NFC for bisectorial operators.
			\item If $M_A = \{-a, \infty\}$,  then $f(A)$ is defined by the NFC for $a$-invertible bisectorial operators. Analogous statement is true if $M_A = \{a,\infty\}$.
			\item If $M_A =  \{-a,a\}$, then $f(A)$ is defined by the NFC for bounded bisectorial operators.
			\item If $M_A = \{\infty\}$, then $f(A)$ is defined by the NFC for $a$-invertible and $-a$-invertible bisectorial operators.
			\item If $M_A = \{-a\}$, then $f(A)$ is defined by the NFC for $a$-invertible and bounded bisectorial operators. Analogous statement is true if $M_A = \{a\}$.
			\item If $M_A = \emptyset$, then $f(A)$ is defined by the NFC for bounded, $a$-invertible and $-a$-invertible bisectorial operators.
		\end{enumerate}
		Moreover, if the poles of $f$ are contained in $\rho(A)$, then $f(A) \in \mathcal{L}(X)$.
	\end{lemma}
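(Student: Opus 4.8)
The plan is to verify the definition of regularizability head-on: I will exhibit, for whichever primary calculus of $A$ is dictated by $M_A$, a single regularizer $e$ with $e(A)$ injective and $ef$ in the corresponding elementary algebra; the closedness and well-definedness of $f(A)=e(A)^{-1}(ef)(A)$ then follow from Lemma \ref{functionalCalculusProperties} and the discussion in Subsection \ref{extensionCalculusSubsection}. The six cases (a)--(f) need not be argued separately. Indeed, by the construction of the extended calculi, every point of $\{-a,a,\infty\}\setminus M_A$ lies in $\rho(A)$ and is absorbed by passing to the NFC that does not integrate near it and imposes no regularity there; hence the regularizer and all integrability conditions only ever involve the points of $M_A$. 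So it suffices to build $e$ so that $ef$ is holomorphic, vanishes at each point of $M_A$, and satisfies the weighted integrability of the elementary algebra near each point of $M_A$, uniformly in $\omega'\in(\varphi,\pi/2]$.

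First I would fix $\varphi<\omega$ and $b\notin\overline{BS_{\varphi,a}}$ with $|b|>a$, and construct $e$ as a finite product of elementary factors. For each finite corner $d\in M_A\cap\{-a,a\}$ at which $f$ has nonzero limit $c_d$, include a factor $(d-z)/(b-z)$, producing a simple zero at $d$; if $\infty\in M_A$, include sufficiently many powers of $(b-z)^{-1}(b+z)^{-1}$ to force the whole product to vanish at infinity; and for each pole $p_j$ of $f$, of order $m_j$, include $(p_j-z)^{m_j}/(b-z)^{m_j}$ to cancel it. Since $f$ is regular at $M_A$, its poles do not accumulate at the corners, so on the region relevant to the calculus they are finite in number and the product is finite. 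A short computation via partial fractions shows this $e$ lies in the appropriate elementary algebra $\mathcal{E}$.

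The core of the argument is to check $ef\in\EOVarphi$ (in the relevant extended sense). Holomorphy of $ef$ is immediate once the pole-cancelling factors are in place; vanishing of $ef$ at each $d\in M_A\cap\{-a,a\}$ holds because either $c_d\neq0$ and $e$ contributes a zero, or $c_d=0$ and $f$ itself vanishes, while the added resolvent powers kill $ef$ at $\infty$. The delicate point, which I expect to be the main obstacle, is the weighted integrability $\int |ef(z)|/\min\{|z-a|,|z+a|\}\,|dz|<\infty$ near each corner, carried out along the non-smooth contours $\partial(BS_{\omega',a})$. Near a corner $d\in\{-a,a\}$ the weight is comparable to $|z-d|$; when a corner zero has been placed and $f$ is bounded (its limit exists) the integrand is bounded on the bounded arcs, which suffices, whereas in the borderline case $c_d=0$ where no zero is placed it is precisely the regularity integral $\int|f(z)-c_d|/|z-d|\,|dz|<\infty$ that yields convergence. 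Near $\infty$ the extra resolvent powers make $|ef(z)|/|z|$ decay faster than $|z|^{-1}$. Matching these estimates to each of the elementary algebras of Subsection \ref{extensionCalculusSubsection} is the technical heart of the proof.

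It remains to see that $e(A)$ is injective and, under the stronger hypothesis, that $f(A)$ is bounded. By Lemma \ref{functionalCalculusProperties}(c), $e(A)$ factors (commutingly) as a product of the operators attached to its elementary factors: the resolvent factors $R(\pm b,A)^k$ are injective, the pole factors yield $(p_jI-A)^{m_j}$ times resolvents and are injective exactly because $p_j\notin\sigma_p(A)$ by hypothesis, and each corner factor yields $(dI-A)$ times a resolvent, which is injective since a zero is placed at $d$ only when $d\notin\sigma_p(A)$ (when $c_d=0$ no corner zero is needed, so no constraint arises there). Hence $e(A)$ is injective, $f$ is regularizable, and $f(A)$ is a well-defined closed operator. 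For the final assertion, if every pole lies in $\rho(A)$ I would subtract from $f$ its principal parts: each is a polynomial in $(p_j-z)^{-1}$, mapping under the calculus to a polynomial in the \emph{bounded} resolvent $R(p_j,A)$, while the remainder is holomorphic and regular at $M_A$ and therefore lies, up to constants and resolvents, in the bounded part $H(A)$ of the calculus. Consequently $f(A)\in\mathcal{L}(X)$.
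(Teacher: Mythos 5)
Your overall strategy --- exhibiting an explicit rational regularizer that cancels the poles and controls the behaviour at the points of $M_A$, then checking membership in the relevant elementary algebra --- is exactly the strategy behind the sectorial result the paper cites for this lemma, and most of the estimates you sketch are sound: the weighted integrability near a corner via the regularity integral when $c_d=0$, the bounded-integrand argument when a zero is present, the decay at $\infty$ coming from added resolvent powers, and the reduction of the six cases to ``only the points of $M_A$ matter.''

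There is, however, one genuine gap: the injectivity of $e(A)$. You include a factor $(d-z)/(b-z)$ for every corner $d\in M_A\cap\{-a,a\}$ at which $c_d\neq 0$, and then justify injectivity of the resulting operator factor $(dI-A)R(b,A)$ by asserting that ``a zero is placed at $d$ only when $d\notin\sigma_p(A)$.'' That assertion is unsupported and contradicts your own construction: the hypothesis only excludes the \emph{poles} of $f$ from $\sigma_p(A)$, and a corner $d=\pm a$ may well be an eigenvalue of $A$ while $f$ has a nonzero finite limit there --- a situation the paper explicitly allows and exploits later (see Lemma \ref{notInjectivityLemma} and the sums over $d\in\sigma_p(A)\cap\{-a,a\}$ in the proof of Theorem \ref{compositionRule}). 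With your $e$, $e(A)$ annihilates $\mathcal{N}(dI-A)\neq\{0\}$ and is not a regularizer. The repair is standard and actually shortens the proof: put no zeros at the corners. Since $\mathcal{E}(BS_{\varphi,a})=\mathcal{E}_0(BS_{\varphi,a})\oplus\CC\frac{1}{b+z}\oplus\CC\frac{1}{b-z}$ already tolerates nonzero finite limits at $\pm a$ (choose the two scalar coefficients so as to match the limits of $ef$ at $\pm a$; the difference then vanishes there and is integrable against $|dz|/\min\{|z-a|,|z+a|\}$ precisely by the regularity of $f$ and the smoothness of the rational factors), it suffices to take $e$ to be the product of the pole-cancelling factors $(p_j-z)^{m_j}/(b-z)^{m_j}$ together with, when $\infty\in M_A$, powers of $\left((b-z)(b+z)\right)^{-1}$; every factor of $e(A)$ is then injective exactly by the hypothesis on the poles. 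Equivalently, first subtract from $f$ a rational function matching its limits at the points of $M_A$ (as the paper does with the functions $f_d$ in the proof of Proposition \ref{domainProp}) and regularize only the poles of the remainder.
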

	
	\begin{proof}
		The proof is analogous to the corresponding result for sectorial operators, see \cite[Lemma 6.2]{haase2005spectral}. We omit the details for the sake of brevity.
	\end{proof}

	We conclude this section by giving a spectral inclusion result for bisectorial operators.
	% which proof follows the same techniques used for the case of sectorial operators in \cite[Proposition 6.3]{haase2005spectral}. 
	
	\begin{proposition}\label{spectralInclusionProposition}
		Let $A \in \BSect(\omega,a)$, and take $f \in \mathcal{M}_A$ to be quasi-regular at $M_A$. Then 
		\begin{align*}
			\widetilde{\sigma}(f(A)) \subset f(\widetilde{\sigma}(A)).
		\end{align*}
	\end{proposition}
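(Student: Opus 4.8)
The plan is to establish the inclusion point by point: I would show that every $\lambda \in \CC_\infty$ with $\lambda \notin f(\widetilde{\sigma}(A))$ belongs to the extended resolvent set of $f(A)$, i.e. $\lambda \notin \widetilde{\sigma}(f(A))$. Here $f$ is evaluated at the points $d \in M_A$ through its limits $c_d \in \CC_\infty$, which exist exactly because $f$ is quasi-regular at $M_A$ (see the Remark preceding Lemma \ref{differentNFCLemma}). The two workhorses are the last assertion of Lemma \ref{functionalCalculusProperties}(f), which says $\lambda \in \rho(f(A))$ if and only if $(\lambda - f)^{-1} \in H(A)$, and Lemma \ref{differentNFCLemma}, which turns ``regular at $M_A$ with all poles in $\rho(A)$'' into ``defined by the NFC of $A$ and bounded''. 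Both are used in the version adapted to whichever extended calculus corresponds to the set $M_A$.

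For a finite value $\lambda \in \CC \setminus f(\widetilde{\sigma}(A))$ I would set $g := (\lambda - f)^{-1} \in \mathcal{M}[\Omega_A]$ and check the two hypotheses of Lemma \ref{differentNFCLemma}. The poles of $g$ are precisely the points $z$ with $f(z) = \lambda$; if such a $z$ lay in $\sigma(A)$ we would get $\lambda = f(z) \in f(\widetilde{\sigma}(A))$, a contradiction, so all poles of $g$ lie in $\rho(A)$. For regularity at a point $d \in M_A \cap \{-a,a\}$ note first that $c_d \neq \lambda$ (otherwise $\lambda = c_d \in f(\widetilde{\sigma}(A))$). If $c_d \in \CC$, then writing
\[
g(z) - \frac{1}{\lambda - c_d} = \frac{f(z) - c_d}{(\lambda - f(z))(\lambda - c_d)}
\]
and using that $\lambda - f(z)$ is bounded away from $0$ near $d$ gives $\bigl| g(z) - (\lambda - c_d)^{-1}\bigr| \lesssim |f(z) - c_d|$, so the weighted integral defining regularity of $g$ at $d$ is dominated by that of $f$, which is finite because $f$ is genuinely regular at $d$ (directly if $c_d = 0$, and by the Remark if $c_d \neq 0$, since then $1/f$ being regular forces $f$ to be regular). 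If instead $c_d = \infty$, so $1/f$ is regular at $d$ with limit $0$, I would write $g = -(1/f)\,(1 - \lambda/f)^{-1}$ and use that $(1 - \lambda/f)^{-1}$ is bounded near $d$ to bound the regularity integral of $g$ by that of $1/f$. The point $d = \infty$ (when $\infty \in M_A$) is handled by the same computation with the weight $|z|^{-1}$. Hence $g$ is regular at $M_A$, Lemma \ref{differentNFCLemma} yields $g(A) \in \mathcal{L}(X)$, so $g \in H(A)$, and Lemma \ref{functionalCalculusProperties}(f) gives $\lambda \in \rho(f(A))$.

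It remains to treat $\lambda = \infty$. The condition $\infty \notin f(\widetilde{\sigma}(A))$ means that $f$ has no pole on $\sigma(A)$ and that $c_d \in \CC$ for every $d \in M_A$. Exactly as above, finiteness of all the $c_d$ together with quasi-regularity forces $f$ itself to be regular at every point of $M_A$, and the poles of $f$ (the points where $f = \infty$) lie in $\rho(A)$. The closing statement of Lemma \ref{differentNFCLemma} then gives $f(A) \in \mathcal{L}(X)$, i.e. $f(A)$ is bounded, which is precisely $\infty \notin \widetilde{\sigma}(f(A))$. Combining the two cases proves $\widetilde{\sigma}(f(A)) \subset f(\widetilde{\sigma}(A))$.

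I expect the main obstacle to be the regularity bookkeeping at the singular set $M_A$: the hypothesis only provides \emph{quasi}-regularity (either $f$ or $1/f$ is regular), so at each $d$ one must convert this into genuine regularity of the relevant function ($g = (\lambda - f)^{-1}$ in the finite case, $f$ in the case $\lambda = \infty$), carefully separating the subcases $c_d \in \CC \setminus \{0\}$, $c_d = 0$ and $c_d = \infty$ and verifying the corresponding weighted $L^1$-bounds. A secondary, more bureaucratic point is to make sure that Lemma \ref{functionalCalculusProperties}(f) and Lemma \ref{differentNFCLemma} are invoked in the form appropriate to the particular extended NFC selected by $M_A$, as flagged in the Remark after the construction of the extensions.
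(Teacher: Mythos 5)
Your proof is correct and takes exactly the route the paper intends: the paper omits the argument entirely, deferring to the sectorial analogue (Proposition 6.3 of the cited Haase reference), and that argument is precisely the one you reconstruct --- reduce $\lambda \notin f(\widetilde{\sigma}(A))$ to showing $(\lambda-f)^{-1} \in H(A)$ via Lemma \ref{functionalCalculusProperties}(f), after checking regularity at $M_A$ and the location of the poles so that Lemma \ref{differentNFCLemma} applies. Your case analysis at the singular points (converting quasi-regularity into genuine regularity of $(\lambda-f)^{-1}$, resp.\ of $f$ when $\lambda=\infty$) is the right bookkeeping and is carried out correctly.
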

	
	\begin{proof}
		The proof follows as the case of sectorial operators, see \cite[Proposition 6.3]{haase2005spectral}.  We omit the details for the sake of brevity.
	\end{proof}

	\section{From bisectorial to sectorial operators and the NFC for sectional operators}\label{sectorialSection} 
	
	We start with relationship between bisectorial and sectorial operators.
	
	\subsection{From bisectorial to sectorial operators}
	In this section, we present a connection between bisectorial operators and sectorial operators, namely Theorem \ref{sectorialTheorem}. The method to prove this connection is based on the proof of the scaling property for sectorial operators given in \cite[Proposition 5.2]{auscher1997holomorphic}, but its proof requires longer and more sophisticated techniques because of the more intricate setting.

	We start with a new definition referring to the limit behavior of a function at the singular points $\{-a,a,\infty\}$. By $|f(z)| \sim |g(z)|$ as $z \to d \in \CC_\infty$, we mean that there is some neighborhood $\Omega$ of $d$ such that $|g(z)| \lesssim |f(z)| \lesssim |g(z)|$ for all $z \in \Omega$.  
	
	\begin{definition}\label{polynomialLimits}
		Let $A\in \BSect(\omega,a)$, $f\in \mathcal{M}[\Omega_A]$, $d \in \overline{\mathcal{D}(f)}$, and $c \in \CC$. 
		\begin{enumerate}[(a)]
			\item We say that $f(z) \to c$ exactly polynomially as $z\to d$ if there is $\alpha > 0$ such that $|f(z) - c|  \sim |z-d|^\alpha$ as $z \to d$ if $d \in \CC$, or such that $|f(z)-c| \sim |z|^{-\alpha}$ as $z \to \infty$ if $d = \infty$.
			
			\item We say that $f(z) \to \infty$ exactly polynomially as $z\to d$ if $(1/f)(z) \to 0$ exactly polynomially as $z\to d$.
			
			%\item We say that $f(z) \to \infty$  exactly polynomially as $z \to \infty$ if $f(z^{-1}) \to 0$ exactly polynomially as $z\to 0$. 
		\end{enumerate}
	\end{definition}
	
	Now,  we fix some notations for the rest of this section. From now on, $A$ will be a bisectorial operator on a Banach space $X$ of angle $\omega \in (0, \pi/2]$ and half-width $a \geq 0$, i.e. $A \in \BSect(\omega,a)$, and recall that $M_A = \{a,-a,\infty\} \cap \widetilde{\sigma}(A)$. For any $\lambda \in \CC, \, f \in \mathcal{M}[\Omega_A]$,  we let $R_f^\lambda \in \mathcal{M}[\Omega_A]$ be given by
	\begin{align}\label{hf}
		R_f^\lambda (z) := \frac{\lambda}{\lambda-f(z)}, \qquad z \in \mathcal{D}(f).
	\end{align}
	Moreover, we will also consider $\auxangle \in [0, \pi)$ and a function $\auxfunct \in \mathcal{M}_A$ satisfying the following conditions:
	\begin{enumerate}[(a)]
		\item $\mathcal{R}(\auxfunct) \subset \overline{S_\auxangle}\cup \{\infty\}$.
		\item $\auxfunct$ is quasi-regular at $M_A$. In particular, it has limits in $M_A$, which we denote by $c_d \in \CC_\infty$ for $d \in M_A$.
		\item $\auxfunct$ has exactly polynomial limits at $M_A \cap \auxfunct^{-1}(\{0,\infty\})$.	
	\end{enumerate}
	
	\begin{remark}\label{openMappingRemark}
		By the open mapping theorem, Property (a) implies that $\auxfunct$ does not have any zeros (unless $\auxfunct =0$) or poles in $\mathcal{D}(\auxfunct)$. In particular, if $\auxfunct\neq 0$, then both $\auxfunct$ and $\auxfunct^{-1}$ are holomorphic.
	\end{remark}
	
	Next,  we present a family of functions which will be crucial to prove our main result of this section. Recall that a meromorphic function $f$ belongs to $H(A) \subset \mathcal{M}_A$ if and only if $f(A)$ is a bounded operator.
	
	\begin{definition}\label{boundingFunctionsDef}
		Assume that we have a family of functions $(f^\lambda)_{\lambda \notin \overline{S_\auxangle}} \subset H(A)$. We say that $(f^\lambda)_{\lambda \notin \overline{S_\auxangle}}$ makes $(R_\auxfunct^\lambda)_{\lambda \notin \overline{S_{\auxangle}}}$ $\varepsilon-$uniformly bounded at $d \in M_A$ with respect to the NFC of $A$ if, for any $\varepsilon \in (0,\pi - \auxangle)$, it satisfies the following properties:
		\begin{enumerate}[(a)]
			\item $|f^\lambda(z)|$ is uniformly bounded for all $z \in \mathcal{D}(f^\lambda)$ and $\lambda \notin \overline{S_{\auxangle+\varepsilon}}$.
			\item $\|f^\lambda(A)\|_{\mathcal L(X)}$ is uniformly bounded for all $\lambda \notin \overline{S_{\auxangle+\varepsilon}}$.
			\item Let $\Gamma$ be an integration path for the NFC of $A$ (see Section \ref{extensionCalculusSubsection}). Then, there exists a neighborhood $\Omega_{d'}$ containing each $d' \in M_A \backslash\{d\}$ for which
			\begin{align*}
				\sup_{\lambda \notin \overline{S_{\auxangle+\varepsilon}}} \int_{\Gamma \cap \Omega_{d'}} |f^\lambda(z)|\|R(z,A)\|_{\mathcal L(X)} \, |dz| < \infty \quad \text{for each } d' \in M\backslash\{d\}.
			\end{align*}
			\item Let $\Gamma$ be as above. Then, there exists a neighborhood $\Omega_d$ containing $d$ for which
			\begin{align*}
				\sup_{\lambda \notin \overline{S_{\auxangle+\varepsilon}}} \int_{\Gamma \cap \Omega_d} |R_\auxfunct^\lambda(z) - f^\lambda(z)| \|R(z,A)\|_{\mathcal L(X)}\, |dz| < \infty.
			\end{align*}
		\end{enumerate}
	\end{definition}
	
	The following lemmas will be useful in finding the family of functions that makes $(R_\auxfunct^\lambda)_{\lambda \notin \overline{S_{\auxangle}}}$ $\varepsilon-$uniformly bounded. Let $d(z,\Omega)$ denote the distance between a point $z \in \CC$ and a set $\Omega \subset \CC$.
	
	\begin{lemma}\label{basicIneqLemma}
		Let $\varepsilon > 0$. We have that $d(z,\CC\backslash \overline{S_{\auxangle+\varepsilon}}) \gtrsim |z|$ for all $z \in \overline{S_\auxangle}$ and that $d(w, \overline{S_\auxangle}) \gtrsim |w|$ for all $w \notin \overline{S_{\auxangle+\varepsilon}}$. As a consequence, $|w/(w-z)|$ and $|z/(w-z)|$ are uniformly bounded for all $z \in \overline{S_\auxangle}$ and $w \notin \overline{S_{\auxangle+\varepsilon}}$.
	\end{lemma}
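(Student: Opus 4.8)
The plan is to reduce everything to an elementary estimate for the distance from a point to a ray, exploiting the angular gap $\varepsilon$ between the two sectors. Writing a nonzero point in polar form as $re^{i\theta}$, recall that the distance from $re^{i\theta}$ to the ray $\{se^{i\psi}:s\ge 0\}$ equals $r\sin(|\theta-\psi|)$ when $|\theta-\psi|\le\pi/2$ and equals $r$ (the nearest point being the origin) when $|\theta-\psi|\ge\pi/2$; in either case it is at least $r\sin(\min(|\theta-\psi|,\pi/2))$. I work in the relevant regime $\auxangle+\varepsilon<\pi$, so that $\CC\setminus\overline{S_{\auxangle+\varepsilon}}$ is a genuine open sector bounded by the two rays at angles $\pm(\auxangle+\varepsilon)$.

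First I would prove the two distance bounds. For $z=re^{i\theta}\in\overline{S_\auxangle}$ we have $|\theta|\le\auxangle$, and the boundary of $\CC\setminus\overline{S_{\auxangle+\varepsilon}}$ consists of the rays at angles $\pm(\auxangle+\varepsilon)$; the angular separation of $\theta$ from the nearer of these is $(\auxangle+\varepsilon)-|\theta|\ge\varepsilon$, so the distance from $z$ to either boundary ray, and hence to the whole complement, is at least $r\sin(\min(\varepsilon,\pi/2))$. With $c:=\sin(\min(\varepsilon,\pi/2))>0$ this gives $d(z,\CC\setminus\overline{S_{\auxangle+\varepsilon}})\ge c|z|$. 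The second bound is symmetric: for $w=\rho e^{i\phi}\notin\overline{S_{\auxangle+\varepsilon}}$ we have $|\phi|>\auxangle+\varepsilon$, and by the symmetry $\theta\mapsto-\theta$ it suffices to treat $\phi>0$. Minimizing $|\phi-\theta|$ over $\theta\in[-\auxangle,\auxangle]$ shows that the nearest point of $\overline{S_\auxangle}$ to $w$ lies on the boundary ray at angle $\auxangle$, whose angular separation from $w$ is $\phi-\auxangle>\varepsilon$; hence $d(w,\overline{S_\auxangle})\ge\rho\sin(\min(\varepsilon,\pi/2))\ge c|w|$ with the same constant $c$.

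Finally I would deduce the two uniform ratio bounds from the point-to-set inequality. Since $z\in\overline{S_\auxangle}$, we get $|w-z|\ge d(w,\overline{S_\auxangle})\ge c|w|$, so $|w/(w-z)|\le 1/c$; and since $w\in\CC\setminus\overline{S_{\auxangle+\varepsilon}}$, likewise $|w-z|\ge d(z,\CC\setminus\overline{S_{\auxangle+\varepsilon}})\ge c|z|$, so $|z/(w-z)|\le 1/c$. As $c$ depends only on the fixed gap $\varepsilon$, both ratios are bounded uniformly in $z\in\overline{S_\auxangle}$ and $w\notin\overline{S_{\auxangle+\varepsilon}}$, as claimed.

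The only delicate points are matters of bookkeeping rather than a genuine obstacle: the case in which the perpendicular foot falls behind the origin (so the nearest point on a ray is the origin itself and the distance is exactly $r$, still $\ge r\sin(\min(\varepsilon,\pi/2))$); the reduction to $\phi>0$ by symmetry; and checking that the non-convexity of $\overline{S_\auxangle}$ when $\auxangle>\pi/2$ does not move the nearest boundary point off the ray at angle $\auxangle$. Uniformity of $c$ is immediate since it depends only on $\varepsilon$.
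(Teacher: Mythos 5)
Your proof is correct and follows essentially the same route as the paper: an elementary angular computation giving $d(z,\CC\setminus\overline{S_{\auxangle+\varepsilon}})\gtrsim|z|$ and $d(w,\overline{S_\auxangle})\gtrsim|w|$, followed by $|w-z|\ge\max\{d(z,\CC\setminus\overline{S_{\auxangle+\varepsilon}}),d(w,\overline{S_\auxangle})\}$. Your extra bookkeeping for the case where the perpendicular foot falls behind the origin is in fact slightly more careful than the paper's stated equality $d(z,\CC\setminus\overline{S_{\auxangle+\varepsilon}})=|z|\sin(\auxangle+\varepsilon-|\arg z|)$, which only holds when that angle is at most $\pi/2$, but the final bound is unaffected.
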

	
	\begin{proof}
		The first two inequalities follow from the fact that $d(z, \CC\backslash \overline{S_{\auxangle+\varepsilon}}) = |z| \sin(\auxangle + \varepsilon - |\arg(z)|) \geq |z| \sin \varepsilon$ and $d(w, \overline{S_{\auxangle}}) = |w| \sin(|\arg w|-\auxangle) \geq |w| \sin \varepsilon$. The other inequalities follow from what we have already proven,  and that $|z-w| \geq \max\{ d(z, \CC\backslash \overline{S_{\auxangle+\varepsilon}}) , d(w, \overline{S_{\auxangle}})\}$ for all $z \in \overline{S_\auxangle}$ and $w \notin \overline{S_{\auxangle+\varepsilon}}$.
	\end{proof}
	
	Lemma \ref{basicIneqLemma} will be used very frequently throughout the paper. We will not make no further reference to it for the sake of brevity.  In particular, as an immediate application we have the following result.
	
	\begin{lemma}\label{IneqLemma}
		Let $c \in \overline{S_{\auxangle}}\backslash \{0\}$, $\varepsilon \in (0, \pi-\auxangle)$ and $f \in \mathcal{M}[\Omega_A]$ such that $\mathcal{R}(f) \subset \overline{S_\auxangle} \cup \{\infty\}$.  Then,  
		\begin{align*}
			\left|R_f^\lambda(z) - \frac{\lambda}{\lambda-c}\right| &\lesssim \min\{1,|f(z) - c|\},
			\\ \left|R_f^\lambda(z) \right| &\lesssim \min\{1,|\lambda| |f(z)|^{-1}\},
			\\ \left|R_f^\lambda(z) - 1 \right| &\lesssim \min\{1,|\lambda|^{-1} |f(z)|\},
		\end{align*}
		where all inequalities hold for all $z \in \mathcal{D}(f)$ and $\lambda \notin \overline{S_{\auxangle+\varepsilon}}$.
	\end{lemma}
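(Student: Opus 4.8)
The plan is to deduce all three estimates from the single geometric fact recorded in Lemma~\ref{basicIneqLemma}, used in the sharpened form that for $z \in \overline{S_\auxangle}$ and $\lambda \notin \overline{S_{\auxangle+\varepsilon}}$ one has $|\lambda - z| \gtrsim \max\{|\lambda|, |z|\}$. This is exactly what the proof of Lemma~\ref{basicIneqLemma} yields, since $|\lambda - z| \geq \max\{d(z, \CC\setminus\overline{S_{\auxangle+\varepsilon}}),\, d(\lambda, \overline{S_\auxangle})\}$ and each distance is $\gtrsim$ the corresponding modulus. First I would apply this to the two admissible points lying in $\overline{S_\auxangle}$, namely $f(z)$ (which is finite because $z \in \mathcal{D}(f)$ and $\mathcal{R}(f) \subset \overline{S_\auxangle}\cup\{\infty\}$) and the fixed $c$, so as to record once and for all the denominator bounds $|\lambda - f(z)| \gtrsim \max\{|\lambda|, |f(z)|\}$ and $|\lambda - c| \gtrsim \max\{|\lambda|, |c|\}$, valid uniformly in $\lambda \notin \overline{S_{\auxangle+\varepsilon}}$ and $z \in \mathcal{D}(f)$.

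For the second estimate, since $R_f^\lambda(z) = \lambda/(\lambda-f(z))$, feeding the bound $|\lambda-f(z)|\gtrsim|\lambda|$ into the numerator gives $|R_f^\lambda(z)|\lesssim 1$, while $|\lambda - f(z)|\gtrsim |f(z)|$ gives $|R_f^\lambda(z)|\lesssim |\lambda|\,|f(z)|^{-1}$; taking the minimum yields the claim. The third estimate is handled identically after writing $R_f^\lambda(z)-1 = f(z)/(\lambda-f(z))$, so the same two denominator bounds produce $\lesssim 1$ and $\lesssim |\lambda|^{-1}|f(z)|$ respectively. In the degenerate case $f(z)=0$ both sides of the third estimate vanish and the $|\lambda|\,|f(z)|^{-1}$ term is vacuous in the second, so nothing is lost there.

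For the first estimate I would compute the algebraic identity $R_f^\lambda(z) - \lambda/(\lambda-c) = \lambda\,(f(z)-c)\,/\,[(\lambda-f(z))(\lambda-c)]$. The bound by $1$ is immediate, as each of the two terms $R_f^\lambda(z)$ and $\lambda/(\lambda-c)$ is individually $\lesssim 1$ by Lemma~\ref{basicIneqLemma}. For the bound by $|f(z)-c|$ the task reduces to controlling the prefactor $|\lambda|/(|\lambda-f(z)|\,|\lambda-c|)$: using the $\max$-form of the two denominator bounds, this is at most $|\lambda|/(\max\{|\lambda|,|f(z)|\}\,\max\{|\lambda|,|c|\}) \le |\lambda|/(|\lambda|\cdot|c|) = 1/|c|$, a constant depending only on the fixed $c\neq 0$. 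Hence the prefactor is $\lesssim 1$ and the estimate by $|f(z)-c|$ follows, and the minimum gives the assertion.

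The only genuine subtlety, and the main obstacle, lies in this last step: a naive use of only $|\lambda - c|\gtrsim|\lambda|$ would leave an uncontrolled residual factor $1/|\lambda-f(z)|$, so it is essential to exploit the sharper $\max$-form of the denominator bounds in order to cancel the $|\lambda|$ appearing in the numerator, and then to absorb the remaining $1/|c|$ into the implicit constant, which is legitimate precisely because $c$ is a fixed nonzero parameter. Everything else is a direct and uniform consequence of Lemma~\ref{basicIneqLemma}.
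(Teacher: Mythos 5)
Your proof is correct and follows essentially the same route as the paper: the same algebraic identities for $R_f^\lambda(z)-\lambda/(\lambda-c)$, $R_f^\lambda(z)$, and $R_f^\lambda(z)-1$, combined with the lower bounds $|\lambda-w|\gtrsim\max\{|\lambda|,|w|\}$ from Lemma~\ref{basicIneqLemma} applied to $w=f(z)$ and $w=c$. The ``subtlety'' you flag is handled the same way (implicitly) in the paper, which bounds $|\lambda/(\lambda-f(z))|\lesssim 1$ and $|1/(\lambda-c)|\lesssim 1/|c|$ as separate factors.
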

	
	\begin{proof}
		It follows from Lemma \ref{basicIneqLemma} that all the above functions are uniformly bounded.
		
		Now, let $c \in \overline{S_{\auxangle}}$.  Applying Lemma \ref{basicIneqLemma}, one gets that
		\begin{align*}
			\left| R_f^\lambda (z) - \frac{\lambda}{\lambda-c}\right| & = \left|\frac{\lambda}{\lambda - f(z)} \frac{f(z) - c}{\lambda-c}\right| \lesssim \left|\frac{f(z)-c}{\lambda-c}\right| \lesssim |f(z) - c|,
		\end{align*}
		for all $z \in \mathcal{D}(f), \, \lambda \notin \overline{S_{\auxangle+\varepsilon}}$. Likewise, one obtains that
		\begin{align*}
			|R_f^\lambda(z)| = \left|\frac{\lambda}{\lambda - f(z)}\right| \lesssim |\lambda| |f(z)|^{-1},
			\qquad |R_f^\lambda(z)-1| = \left|\frac{f(z)}{\lambda - f(z)}\right| \lesssim |\lambda|^{-1} |f(z)|,
		\end{align*}
		for all $z \in \mathcal{D}(f), \, \lambda \notin \overline{S_{\auxangle+\varepsilon}}$. The proof is finished.
	\end{proof}
	
The following lemma gives an integral bound which will be very useful to prove Proposition \ref{existenceProposition} below.

	\begin{lemma}\label{integralBoundLemma}
		Let $I \subset \RR^+$ be a measurable subset and let $(f_\nu)_{\nu}:I \to \CC$ be a family of complex-valued functions. Let $F_1, F_2:I \to \RR^+$ be some positive functions which are integrable with respect to the measure $dx/x$, and let $r_\nu>0$ for all indices $\nu$, and $s_n, t_m> 0$ for $n = 1, ..., N$, $m = 1,..., M$ for some $N,M\in \NN$. Assume that 
		\begin{align}\label{f-gamma}
			|f_\nu(x)| \lesssim \min\left\{F_1(x) + \sum_{n\leq N} (r_\nu x)^{s_n}, F_2(x) + \sum_{m\leq M} (r_\nu x)^{-t_m}\right\}, 
		\end{align}
		for all $x \in I$ and all indices $\nu$. Then
		\begin{align*}
			\sup_{\nu} \int_I |f_\nu(x)| \frac{dx}{x} < \infty.
		\end{align*}
	\end{lemma}
	
	\begin{proof}
		By adding terms of the type $(r_\nu x)^{t_m}$ to the first expression inside the minimum in \eqref{f-gamma},  and terms of the type $(r_\nu x)^{-s_n}$, one can assume that $N = M$ and $s_n = t_n$ for all $n = 1,...,N$. It follows that
		\begin{align*}
			\int_I |f_\nu(x)| \frac{dx}{x} & \lesssim \int_I \Big(F_1(x) + F_2(x)\Big) \frac{dx}{x} 
			+ \int_I \min\left\{\sum_{n\leq N} (r_\nu x)^{s_n}, \sum_{n\leq N} (r_\nu x)^{-s_n}\right\} \frac{dx}{x}.
		\end{align*}
		By the integrability condition on $F_1, F_2$, it suffices to bound the second term for all indices $\nu$. By bounding the integral on $I$ by the integral on $(0,\infty)$, and a simple change of variable, one gets that
		\begin{align*}
			&\int_I \min\left\{\sum_{n\leq N} (r_\nu x)^{s_n}, \sum_{n\leq N} (r_\nu x)^{-s_n}\right\} \frac{dx}{x} 
			= \int_0^\infty \min\left\{\sum_{n\leq N} x^{s_n}, \sum_{n\leq N} x^{-s_n}\right\} \frac{dx}{x} 
			\\ & \quad =  \sum_{n\leq N} \left(\int_0^1 x^{s_n-1} \, dx + \int_1^\infty x^{-s_n-1} \, dx \right)< \infty.
		\end{align*}
		The proof is concluded.
	\end{proof}
	
	In order to prove the main result of this section, some integrals related to resolvent operators need to be bounded. The techniques to bound them vary from one case to another, as shows the proposition below.
	
	\begin{proposition}\label{existenceProposition}
		For each point $d \in M$, there exists a family of functions $(f^\lambda)_{\lambda \notin \overline{S_{\auxangle}}}$ that makes $(R_\auxfunct^\lambda)_{\lambda \notin \overline{S_{\auxangle}}}$ $\varepsilon-$uniformly bounded at $d$ with respect to the NFC of $A$.
	\end{proposition}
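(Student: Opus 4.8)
The plan is to construct the family $(f^\lambda)_{\lambda\notin\overline{S_\auxangle}}$ by correcting $R_\auxfunct^\lambda$ near each point of $M_A$ according to the limiting value of $\auxfunct$ there, and to check (a)--(d) by localising every integral at the points $-a,a,\infty$. The key preliminary remark is that outside $M_A$ the resolvent norm $\|R(z,A)\|_{\mathcal L(X)}$ is bounded while the part of the path $\Gamma$ lying outside the neighbourhoods $\Omega_{d'}$ has finite length, so that region contributes a uniform constant and only the behaviour in small neighbourhoods of the singular points matters. Near $d'\in M_A$ the bisectorial resolvent bound gives $\|R(z,A)\|_{\mathcal L(X)}\lesssim|z-d'|^{-1}$ (and $\lesssim|z|^{-1}$ at $\infty$), so each local term is an integral $\int|f^\lambda(z)|\,|z-d'|^{-1}\,|dz|$ which, after the substitution $x=|z-d'|$, is exactly of the form controlled by Lemma \ref{integralBoundLemma}.

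For the construction I would first fix rational localisers $(e_{d'})_{d'\in M_A}\subset H(A)$, each equal to $1$ at $d'$, vanishing to first order at the other points of $M_A$, and built from the elementary resolvent functions $1/(b\pm z)$; these separate the singular points, including any that carry the same value of $\auxfunct$. With $c_{d'}=\lim_{z\to d'}\auxfunct(z)$ and $L_{d'}^\lambda=\lim_{z\to d'}R_\auxfunct^\lambda(z)=\lambda/(\lambda-c_{d'})$, the principle is: at the distinguished point $d$ let $f^\lambda$ reproduce $R_\auxfunct^\lambda$, so that $R_\auxfunct^\lambda-f^\lambda$ is integrable there; at each other $d'$ replace $R_\auxfunct^\lambda$ by an integrable correction, subtracting the constant $L_{d'}^\lambda e_{d'}$ when $c_{d'}\in\CC\setminus\{0\}$ and inserting a bump correction modelled on $R_\auxfunct^\lambda(1-R_\auxfunct^\lambda)$ when $c_{d'}\in\{0,\infty\}$, i.e. at a zero or pole, where the exactly polynomial hypothesis is available.

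Conditions (a), (c) and (d) then reduce to Lemma \ref{IneqLemma}. Property (a) is immediate from $|R_\auxfunct^\lambda|,|L_{d'}^\lambda|\lesssim1$ and the fixed bounded $e_{d'}$. For (c) and (d) two regimes occur: near a finite non-zero limit, $|R_\auxfunct^\lambda-L_{d'}^\lambda|\lesssim|\auxfunct-c_{d'}|$ and the regularity of $\auxfunct$ render $\int|{\cdot}|\,|z-d'|^{-1}|dz|$ finite uniformly in $\lambda$; near a zero or pole the product estimate $|R_\auxfunct^\lambda(1-R_\auxfunct^\lambda)|\lesssim\min\{|\auxfunct|/|\lambda|,\,|\lambda|/|\auxfunct|\}$ together with $|\auxfunct(z)|\sim|z-d'|^{\pm\alpha}$ produces the two-sided power behaviour, at the scale $|\lambda|^{\mp1/\alpha}$, demanded by Lemma \ref{integralBoundLemma}. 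One sees here why merely subtracting the constant at a zero or pole is not enough: the remainder obeys only the one-sided bound $\lesssim\min\{1,|\auxfunct|/|\lambda|\}$ (resp. $\lesssim\min\{1,|\lambda|/|\auxfunct|\}$), whose resolvent integral diverges logarithmically as $\lambda\to0$ (resp. $\lambda\to\infty$), so the bump correction, and with it the exactly polynomial hypothesis, is indispensable.

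The hard part will be condition (b), and it concentrates entirely at the zeros and poles. At a finite non-zero limit there is nothing to do: $f^\lambda-L_{d'}^\lambda e_{d'}$ is integrable against the resolvent uniformly in $\lambda$, since the rate $|\auxfunct-c_{d'}|$ is independent of $\lambda$, so $f^\lambda(A)=L_{d'}^\lambda e_{d'}(A)+(\text{bounded})$ is controlled. At a zero or pole $d$, by contrast, $R_\auxfunct^\lambda$ passes between its two limits across the $\lambda$-dependent scale $|z-d|\sim|\lambda|^{\pm1/\alpha}$, and no uniformly bounded approximant can track this: an $f^\lambda$ that were integrable against the resolvent near $d$ would, through (d), force $R_\auxfunct^\lambda$ to be integrable there too, which it is not. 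Hence $\|f^\lambda(A)\|_{\mathcal L(X)}$ cannot be obtained from $\int|f^\lambda|\,\|R(z,A)\|_{\mathcal L(X)}$ and must instead exploit the cancellation invisible to absolute values, by representing $f^\lambda(A)$ through the resolvents $R(z,A)$ with the $\lambda$-scaling built in --- the phenomenon behind the fractional-power bound $\|\lambda(\lambda+A^\alpha)^{-1}\|_{\mathcal L(X)}\lesssim\int_0^\infty\frac{\lambda s^\alpha}{s^{2\alpha}+\lambda^2}\frac{ds}{s}=\frac{\pi}{2\alpha}$, which is $\lambda$-independent precisely because of the scale invariance the exactly polynomial hypothesis supplies. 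Carrying this out for a general $\auxfunct$ along the more intricate bisectorial paths, with the approximation tools announced in the introduction, is where the main effort goes.
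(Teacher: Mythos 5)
Your overall strategy --- localise at each point of $M_A$ with rational cut-offs built from $1/(b\pm z)$, treat a finite nonzero limit $c_d$ by matching the constant $\lambda/(\lambda-c_d)$ there, and reduce all integral estimates to Lemma \ref{IneqLemma}, the resolvent bound $\|R(z,A)\|_{\mathcal L(X)}\lesssim \min\{|z-a|,|z+a|\}^{-1}$ on $\Gamma$, and Lemma \ref{integralBoundLemma} after the substitution $x=|z-d|$ --- is exactly the paper's; Steps 1 and 4 of its proof are your ``finite nonzero limit'' case almost verbatim, with localisers $\frac{b^2-a^2}{2a}\frac{a\pm z}{b^2-z^2}$ and $\frac{a^2-z^2}{b^2-z^2}$. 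You have also correctly located the real difficulty: at a point $d$ where $\auxfunct$ has a zero or pole, property (d) forces $f^\lambda$ to follow $R_\auxfunct^\lambda$ across the $\lambda$-dependent scale $|z-d|\sim|\lambda|^{\pm1/\alpha}$, so property (b) cannot be extracted from $\int_\Gamma|f^\lambda(z)|\,\|R(z,A)\|_{\mathcal L(X)}\,|dz|$.

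But the proposal stops precisely at that point: you announce that one must ``represent $f^\lambda(A)$ through the resolvents with the $\lambda$-scaling built in'' and that carrying this out is ``where the main effort goes'', without producing the object that does it. The missing idea is that the surrogate at a zero (say $d=a$ with $|\auxfunct(z)|\sim|z-a|^\alpha$) should be a \emph{rational function of $z$}, namely $\frac{|\lambda|^{1/\alpha}}{|\lambda|^{1/\alpha}+a-z}$ times the fixed localiser (and $\frac{a-z}{|\lambda|^{-1/\alpha}+a-z}$ at a pole). Then $f^\lambda(A)$ equals, up to a fixed bounded factor, $|\lambda|^{1/\alpha}R(|\lambda|^{1/\alpha},A-aI)$, so property (b) is nothing but the bisectorial resolvent estimate for $A$ itself: no cancellation argument or scale-invariant integral computation for a general $\auxfunct$ is required, and the exactly-polynomial hypothesis is spent entirely on property (d), where it guarantees that this surrogate and $R_\auxfunct^\lambda$ switch between their limiting values $1$ and $0$ at the same scale, putting the difference in the two-sided form required by Lemma \ref{integralBoundLemma}. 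Your candidates, by contrast, are functions of $\auxfunct$ --- $R_\auxfunct^\lambda$ itself at the distinguished point, or the bump $R_\auxfunct^\lambda(1-R_\auxfunct^\lambda)=-\lambda\auxfunct/(\lambda-\auxfunct)^2$ elsewhere --- and a uniform bound on their operator norms is essentially the sectoriality of $\auxfunct(A)$ that the whole construction is meant to establish, so the argument as sketched is circular at exactly the step you flag as hard. A smaller point: Definition \ref{boundingFunctionsDef}(c) only asks that $|f^\lambda|$ itself be uniformly integrable against the resolvent near the points $d'\neq d$, which the fixed $\lambda$-independent vanishing factor already provides; introducing $\lambda$-dependent corrections at those points is unnecessary and only creates further terms whose operator norms would have to be controlled in (b).
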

	
	\begin{proof}
		We will proceed by examining all the possible cases. Throughout the proof $\varepsilon$ is any appropriate number in $ (0,\pi - \auxangle)$ whenever it appears. Also,  $b > a$ for the rest of the proof. We proceed in several steps.
		
		{\bf Step 1}: Let $d = a$ and $c_a \in \overline{S_{\auxangle}} \backslash \{0,\infty\}$.  We claim that the family of functions given by
		\begin{align*}
			f_{a, c_a}^\lambda(z) := \frac{\lambda}{\lambda-c_a} \frac{b^2-a^2}{2a} \frac{a+z}{b^2-z^2}, \quad z  \in \mathcal{D}(\auxfunct),
		\end{align*}
		makes $(R_\auxfunct^\lambda)_{\lambda \notin \overline{S_{\auxangle}}}$ $\varepsilon-$uniformly bounded  at $a$.  Indeed,  it follows from Lemma \ref{basicIneqLemma} that $|f_{a,c_a}^\lambda (z)|$ is uniformly bounded for all $z \in \mathcal{D}(f_{a,c_a}^\lambda) = \mathcal{D}(\auxfunct)$ and $\lambda \notin \overline{S_{\auxangle+\varepsilon}}$. Moreover, $f_{a,c_a}^\lambda \in H(A)$ with 
		\begin{align*}
			f_{a,c_a}^\lambda (A) = \frac{\lambda}{\lambda-c_a} \frac{b^2-a^2}{2a} (A+aI) R(b^2,A^2),
		\end{align*}
		so $\|f_{a,c_a}^\lambda (A)\|_{\mathcal L(X)}$ is also uniformly bounded for all $\lambda \notin \overline{S_{\auxangle+\varepsilon}}$ (recall that $\sigma(A^2)= \sigma(A)^2$). It is clear that the integrability property (c) in Definition \ref{boundingFunctionsDef} holds,  since again $\lambda/(\lambda-c_a)$ is bounded and $(z+a)/(b^2-z^2)$ is integrable with respect to $\|R(z,A)\|_{\mathcal L(X)}|dz|$ at the neighborhoods of $-a$ and $\infty$. Finally, we have that
		\begin{align*}
			\left|R_\auxfunct^\lambda(z) - f_{a,c_a}^\lambda(z)\right| &\leq \left| R_\auxfunct^\lambda(z) - \frac{\lambda}{\lambda - c_a}\right| + \left|\frac{\lambda}{\lambda-c_a}\right| \left|\frac{b^2-a^2}{2a} \frac{z+a}{b^2-z^2} - 1\right|
			%&\leq \left| R_g^\lambda(z) - \frac{\lambda}{\lambda - c_a}\right| + \left|f_{a,c_a}^\lambda - \frac{\lambda}{\lambda-c_a}\right|
			\lesssim |\auxfunct(z) - c_a| + |z-a|,
		\end{align*}
		where the first estimate is obtained by an application of Lemma \ref{IneqLemma},  and the second one by using Lemma \ref{basicIneqLemma} and Taylor's expansion of order $1$. Since $\auxfunct$ is regular at $a$ with limit $c_a$, it follows that $|R_\auxfunct^\lambda(z) - f_{a,c_a}^\lambda(z)|$ satisfies the integrability property (d) in Definition \ref{boundingFunctionsDef} and the claim is proven.
		
		{\bf Step 2}:  Next,  let $d = a$ and $c_a = 0$. Since $a \in M_A \cap \auxfunct^{-1}(\{0,\infty\})$,  it follows that in a neighborhood $\Omega_a$ containing $a$ and a real number $\alpha >0$, we have that $|\auxfunct(z)| \sim |z-a|^\alpha$ for all $z \in \Omega_a \cap \mathcal{D}(\auxfunct)$. We consider the family of functions given by
		\begin{align*}
			f_{a,0}^\lambda (z) := \frac{|\lambda|^{1/\alpha}}{|\lambda|^{1/\alpha}+a-z} \frac{b^2-a^2}{2a} \frac{a+z}{b^2-z^2}, \quad z\in \mathcal{D}(\auxfunct).
		\end{align*}
		Let us show that $(f_{a,0}^\lambda)_{\lambda \notin \overline{S_{\auxangle+\varepsilon}}}$ satisfies the desired properties. First of all, it is clear that $|f_{a,0}^\lambda(z)|$ is uniformly bounded for all $z \in \mathcal{D}(f_{a,0}^\lambda) = \mathcal{D}(\auxfunct)$ and $\lambda \notin \overline{S_{\auxangle+\varepsilon}}$. Moreover, $f_{a,0}^\lambda \in H(A)$ with 
		\begin{align*}
			f_{a,0}^\lambda (A) = \frac{b^2-a^2}{2a} (A+aI) R(b^2, A^2) |\lambda|^{1/\alpha} R(|\lambda|^{1/\alpha}, A-aI).
		\end{align*}
		Thus, it follows from the definition of bisectorial operators that $\|f_{a,0}^\lambda (A)\|_{\mathcal L(X)}$ is uniformly bounded for all $\lambda \notin \overline{S_{\auxangle+\varepsilon}}$. It is readily seen that it satisfies Property (c) in Definition \ref{boundingFunctionsDef},  since $|\lambda|^{1/\alpha}/(|\lambda|^{1/\alpha}+a-z)$ is uniformly bounded. 
		
		Next, we consider $|R_\auxfunct^\lambda (z) - f_{c_a,0}^\lambda (z)|$ in $\Omega_a \cap \mathcal{D}(\auxfunct)$. On the one hand, by the triangle inequality and various applications of Lemmas \ref{basicIneqLemma} and \ref{IneqLemma},  we get 
		\begin{align*}
			| R_\auxfunct^\lambda (z) - f_{a,0}^\lambda (z)| & \leq \left|R_\auxfunct^\lambda(z)\right| + \frac{b^2-a^2}{2a}\left|\frac{a+z}{b^2-z^2}\right| \left|\frac{|\lambda|^{1/\alpha}}{|\lambda|^{1/\alpha}+a-z}\right|
			\\ & \lesssim |\lambda| |\auxfunct(z)|^{-1} + |\lambda|^{1/\alpha} |z-a|^{-1}
			\lesssim |\lambda^{-1/\alpha} (z-a)|^{-\alpha} + |\lambda^{-1/\alpha} (z-a)|^{-1},
		\end{align*} 
		for all $z \in \Omega_a \cap \mathcal{D}(\auxfunct)$ and $\lambda \notin \overline{S_{\auxangle+\varepsilon}}$. On the other hand, it follows that
		\begin{align*}
			| R_\auxfunct^\lambda (z) - f_{a,0}^\lambda (z)| & \leq |R_\auxfunct^\lambda (z) -1| + |f_{a,0}^\lambda(z) -1|. 
		\end{align*}
		Another application of Lemma \ref{IneqLemma} yields that $|R_\auxfunct^\lambda (z) -1| \lesssim |\lambda|^{-1} |\auxfunct(z)| \lesssim |\lambda^{-1/\alpha} (z-a)|^\alpha$ for all $z \in \Omega_a \cap \mathcal{D}(\auxfunct)$ and $\lambda \notin \overline{S_{\auxangle+\varepsilon}}$. Moreover, one gets that
		\begin{align*}
			|f_{a,0}^\lambda(z) -1| &\leq \left|f_{a,0}^\lambda (z) - \frac{b^2-a^2}{2a} \frac{a+z}{b^2-z^2}\right| + \left|\frac{b^2-a^2}{2a} \frac{a+z}{b^2-z^2} -1 \right|
			%\\ & \leq \frac{b^2-a^2}{2a} \left| \frac{a+z}{b^2-z^2}\right| \left|\frac{|\lambda|^{1/\alpha}}{|\lambda|^{1/\alpha}+a-z} - 1 \right| + \left| \frac{b^2-a^2}{2a} \frac{a+z}{b^2-z^2} - 1 \right|
			\\ & \lesssim \left|\frac{a-z}{|\lambda|^{1/\alpha}+a-z}\right| + |z-a|
			\lesssim |\lambda^{-1/\alpha} (z-a)| + |z-a|,
		\end{align*}
		for all $z \in \Omega_a \cap \mathcal{D}(\auxfunct)$ and $\lambda \notin \overline{S_{\auxangle+\varepsilon}}$.  Summarizing,  if we set $U_\lambda(z) := |\lambda|^{1/\alpha} |z-a|$,  then we obtain that
		\begin{align*}
			|R_\auxfunct^\lambda(z) - f_{a,0}^\lambda(z)| \lesssim \min\left\{ \sum_{j\in \{1,\alpha\}} U_\lambda(z)^{-j}, |z-a| + \sum_{j\in \{1,\alpha\}} U_\lambda(z)^j   \right\},
		\end{align*}
		for all $z \in \Omega_a \cap \mathcal{D}(\auxfunct)$ and $\lambda \notin \overline{S_{\auxangle+\varepsilon}}$.  An application of Lemma \ref{integralBoundLemma} together with the bound of the resolvent of a bisectorial operator, yield that $(f_{a,0}^\lambda)_{\lambda \notin \overline{S_{\auxangle}}}$ satisfies Property (d) in Definition \ref{boundingFunctionsDef}, so in fact $(f_{a,0}^\lambda)_{\lambda \notin \overline{S_{\auxangle}}}$ makes $(R_\auxfunct^\lambda)_{\lambda \notin \overline{S_{\auxangle}}}$ $\varepsilon-$uniformly bounded at $a$ with respect to the NFC of $A$.
		
		{\bf Step 3}:  Next,  let $d = a$ and $c_a = \infty$.  By hypothesis,  in a neighborhood $\Omega_a$ containing $a$,  and a real number $\alpha >0$, we have that $|\auxfunct(z)| \sim |z-a|^{-\alpha}$ for all $z \in \Omega_a \cap \mathcal{D}(\auxfunct)$. Set
		\begin{align*}
			f_{a,\infty}^\lambda (z) :=  \frac{a-z}{|\lambda|^{-1/\alpha}+a-z} \frac{b^2-a^2}{2a}\frac{a+z}{b^2-z^2}, \quad z \in \mathcal{D}(\auxfunct).
		\end{align*}
		Similar reasoning as in the above cases together with the observation that 
		\begin{align*}
			(a I -A)R(|\lambda|^{-1/\alpha},A-aI) = I - |\lambda|^{-1/\alpha} R(|\lambda|^{-1/\alpha}, A-aI),
		\end{align*}
		leads to the fact that the family $(f_{a,\infty}^\lambda)_{\lambda \notin \overline{S_{\auxangle}}}$ satisfies Properties (a), (b) and (c) in Definition \ref{boundingFunctionsDef}.  By Lemma \ref{basicIneqLemma},  it easily follows that $|f_{a,\infty}^\lambda(z)| \lesssim |\lambda|^{1/\alpha} |z-a|$. Therefore,  the triangle inequality and  an application of Lemma \ref{IneqLemma} yield
		\begin{align*}
			|R_\auxfunct^\lambda(z) - f_{a,\infty}^\lambda (z)| & \leq |\lambda^{1/\alpha} (z-a)|^\alpha + |\lambda^{1/\alpha}(z-a)|,
		\end{align*}
		for all $z \in \Omega_a \cap \mathcal{D}(\auxfunct)$ and $\lambda \notin \overline{S_{\auxangle+\varepsilon}}$.  This implies that
		\begin{align*}
			|R_\auxfunct^\lambda(z) - f_{a,\infty}^\lambda (z)| &\leq \left|R_\auxfunct^\lambda (z) - \frac{a-z}{|\lambda|^{-1/\alpha}+a-z} \right| 
			+ \left|\frac{a-z}{|\lambda|^{-1/\alpha}+a-z}\right| \left|\frac{b^2-a^2}{2a}\frac{a+z}{b^2-z^2} - 1\right| 
			\\ &\lesssim \left|R_\auxfunct^\lambda (z) - \frac{a-z}{|\lambda|^{-1/\alpha}+a-z} \right|  + |z-a|,
		\end{align*}
		where we have used again the Taylor expansion of order $1$ and the fact that $|(a-z)/(|\lambda|^{-1/\alpha}+a-z)|$ is uniformly bounded. In addition, it follows that
		\begin{align*}
			\left|R_\auxfunct^\lambda (z) - \frac{a-z}{|\lambda|^{-1/\alpha}+a-z} \right|  =& \left|\frac{\lambda |\lambda|^{-1/\alpha} + \auxfunct(z) (a-z)}{(\lambda - \auxfunct(z))(|\lambda|^{-1/\alpha}+a-z)}\right|
			\\  \leq & \left|\frac{\lambda |\lambda|^{-1/\alpha}}{(\lambda - \auxfunct(z))(|\lambda|^{-1/\alpha}+a-z)}\right| + \left|\frac{ \auxfunct(z) (a-z)}{(\lambda - \auxfunct(z))(|\lambda|^{-1/\alpha}+a-z)}\right|
			\\ \lesssim & |\lambda^{1/\alpha}(z-a)|^{-1} + |\lambda^{1/\alpha} (z-a)|^{-\alpha}, 
		\end{align*}
		for all $z \in \Omega_a \cap \mathcal{D}(\auxfunct)$ and $\lambda \notin \overline{S_{\auxangle+\varepsilon}}$, 
		where we have used various applications of Lemmas \ref{basicIneqLemma} and \ref{IneqLemma} in the last step. Finally, reasoning as in the case before with Lemma \ref{integralBoundLemma}, one obtains that $(f_{a,\infty}^\lambda)_{\lambda \notin \overline{S_{\auxangle}}}$ satisfies also Property (d) in Definition \ref{boundingFunctionsDef}.
		
		{\bf Step 4}:  Similar reasoning as in Step 1 shows that, if either $-a, \infty \in M_A$ with $c_{-a}, c_\infty \in \overline{S_{\auxangle}} \backslash \{0,\infty\}$, the families of functions given by
		\begin{align*}
			f_{-a,c_{-a}}^\lambda(z) &:= \frac{\lambda}{\lambda-c_{-a}} \frac{b^2-a^2}{2a} \frac{a-z}{b^2-z^2}, \quad z \in \mathcal{D}(\auxfunct)
			\\ f_{\infty,c_\infty}^\lambda(z) &:= \frac{\lambda}{\lambda-c_\infty} \frac{a^2-z^2}{b^2-z^2}, \quad \quad\quad \qquad z \in \mathcal{D}(\auxfunct),
		\end{align*}
		make $(R_\auxfunct^\lambda)_{\lambda \notin \overline{S_{\auxangle}}}$ $\varepsilon-$uniformly bounded at $-a$ and $\infty$,  respectively,  with respect to the NFC of $A$.
		
		{\bf Step 5}:  Let either $-a,\infty \in M_A$ with polynomial limits $c_a, c_\infty = 0$ of exactly order $\alpha>0$, that is, $|\auxfunct(z)| \sim |z+a|^\alpha$ near $z=-a$ and $|\auxfunct(z)| \sim |z|^{-\alpha}$ near $z=\infty$,  respectively. Proceeding as in Step 2,  one has that the families of functions given by
		\begin{align*}
			f_{-a,0}^\lambda (z) &:= \frac{|\lambda|^{1/\alpha}}{|\lambda|^{1/\alpha}+a+z} \frac{b^2-a^2}{2a} \frac{a-z}{b^2-z^2}, \quad z \in \mathcal{D}(\auxfunct),
			\\ f_{\infty,0}^\lambda(z) &:= \frac{b-z}{|\lambda|^{-1/\alpha}+b-z} \frac{a^2-z^2}{b^2-z^2}, \quad \qquad \quad z \in \mathcal{D}(\auxfunct),
		\end{align*}
		satisfy the desired properties.
		
		{\bf Step 6}:  Finally, assume that either $-a,\infty \in M_A$ with polynomial limits $c_a, c_\infty = \infty$ of exactly order $\alpha>0$, i.e. $|\auxfunct(z)| \sim |z+a|^{-\alpha}$ near $z=-a$ and $|\auxfunct(z)| \sim |z|^\alpha$ near $z=\infty$,  respectively.  Analogous computations as in Step 3 yields to the fact that the families of functions given by
		\begin{align*}
			f_{-a,\infty}^\lambda (z) &:=  \frac{a+z}{|\lambda|^{-1/\alpha}+a+z} \frac{b^2-a^2}{2a}\frac{a-z}{b^2-z^2}, \quad z \in \mathcal{D}(\auxfunct),
			\\ f_{\infty,\infty}^\lambda(z) &:= \frac{|\lambda|^{1/\alpha}}{|\lambda|^{1/\alpha}+b-z} \frac{a^2-z^2}{b^2-z^2}, \quad \qquad\qquad z \in \mathcal{D}(\auxfunct),
		\end{align*}
		make $(R_\auxfunct^\lambda)_{\lambda \notin \overline{S_{\auxangle}}}$ $\varepsilon$-uniformly bounded at $-a$ and $\infty$ respectively with respect to the NFC of $A$. The proof is complete.
	\end{proof}
	
	\begin{remark}
		If $M_A$ is a proper subset of $\{-a,a,\infty\}$ one can slightly simplify the families of functions given in the proof of Proposition \ref{existenceProposition}. More precisely, for $d\in M_A$, one can eliminate the terms in $f_d^\lambda$ that make the functions uniformly bounded and integrable in the rest of the points in $M_A$. For instance, if $M_A = \{a,\infty\}$ then the behavior of the functions near $-a$ is irrelevant. Thus, if $c_a \notin \{0,\infty\}$, then the family of functions given by
		\begin{align*}
			\frac{\lambda}{\lambda-c_a} \frac{b-a}{b-z}, \quad z \in \mathcal{D}(\auxfunct),
		\end{align*}
		also satisfies the desired properties.
	\end{remark}
	
	\begin{remark}\label{integrationRemark}
		Let $(f_d^\lambda)_{\lambda \notin \overline{S_{\auxangle}}}$ be a family of functions that makes $(R_\auxfunct^\lambda)_{\lambda \notin \overline{S_{\auxangle}}}$ $\varepsilon-$uniformly bound at each $d\in M_A$ with respect to the NFC of $A$. From the bounds appearing in the proof of Proposition \ref{existenceProposition},  one obtains that in fact $(R_\auxfunct^\lambda - \sum_{d\in M_A} f_d^\lambda) \in \mathcal{E}_0$ for all $\lambda \notin \overline{S_{\auxangle}}$.
	\end{remark}

	We are now ready to state the main result of this section.
	
	\begin{theorem}\label{sectorialTheorem}
		Let $(\omega,a) \in (0,\pi/2] \times [0,\infty)$ and $\finalangle \in [0,\pi)$. Let $A \in \BSect(\omega,a)$ in a Banach space $X$ and $\finalfunct \in \mathcal{M}_A$. Assume the following:
		\begin{enumerate}[(a)]
			\item For any $\auxangle > \finalangle$, one can find $\varphi \in (0, \omega)$ such that $\finalfunct(BS_{\varphi,a}) \subset \overline{S_{\auxangle}} \cup\{\infty\}$.
			\item $\finalfunct$ is quasi-regular at $M_A$.
			\item $\finalfunct$ has exactly polynomial limits at $M_A \cap \finalfunct^{-1}(\{0,\infty\})$.
		\end{enumerate}
	Then, $\finalfunct(A)$ is a sectorial operator of angle $\finalangle$.
	\end{theorem}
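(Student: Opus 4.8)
The plan is to establish the two conditions defining a sectorial operator of angle $\finalangle$ for $\finalfunct(A)$: first the spectral inclusion $\sigma(\finalfunct(A)) \subset \overline{S_\finalangle}$, and then the resolvent estimate $\sup\{\|\lambda R(\lambda,\finalfunct(A))\|_{\mathcal L(X)} : \lambda \notin S_{\beta'}\} < \infty$ for every $\beta' \in (\finalangle,\pi)$. For the inclusion I would invoke Proposition \ref{spectralInclusionProposition}: since $\finalfunct \in \mathcal{M}_A$ is quasi-regular at $M_A$ by hypothesis (b), one has $\widetilde{\sigma}(\finalfunct(A)) \subset \finalfunct(\widetilde{\sigma}(A))$. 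Now $\sigma(A) \subset \overline{BS_{\omega,a}}$, and $\overline{BS_{\omega,a}}$ sits inside $BS_{\varphi,a}$ for every $\varphi < \omega$ except at the vertices $\pm a$, at which $\finalfunct$ has well-defined limits in $\CC_\infty$; hence hypothesis (a) forces $\finalfunct(\widetilde{\sigma}(A)) \subset \overline{S_\auxangle} \cup \{\infty\}$ for every $\auxangle > \finalangle$. Intersecting over $\auxangle \downarrow \finalangle$ gives $\sigma(\finalfunct(A)) \subset \overline{S_\finalangle}$, as required.

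For the resolvent estimate, fix $\beta' \in (\finalangle,\pi)$, choose $\auxangle \in (\finalangle,\beta')$ and, by hypothesis (a), a $\varphi \in (0,\omega)$ with $\finalfunct(BS_{\varphi,a}) \subset \overline{S_\auxangle} \cup \{\infty\}$, enlarging $\varphi$ if necessary so that also $\finalfunct \in \mathcal{M}(BS_{\varphi,a})_A$. Regarding $A$ as an element of $\BSect(\varphi,a)$ and $\auxfunct := \finalfunct|_{BS_{\varphi,a}}$ as a function with $\mathcal{R}(\auxfunct) \subset \overline{S_\auxangle}\cup\{\infty\}$, hypotheses (b) and (c) show that $\auxfunct$ and $\auxangle$ satisfy the standing assumptions of this section. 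By the spectral inclusion just proven, every $\lambda \notin \overline{S_\auxangle}$ lies in $\rho(\finalfunct(A))$, and Lemma \ref{functionalCalculusProperties}(f) together with \eqref{hf} gives $\lambda R(\lambda,\finalfunct(A)) = R_\auxfunct^\lambda(A)$. It therefore suffices to bound $\|R_\auxfunct^\lambda(A)\|_{\mathcal L(X)}$ uniformly for $\lambda \notin \overline{S_{\auxangle+\varepsilon}}$ for some fixed $\varepsilon \in (0,\beta'-\auxangle)$: since then $\auxangle + \varepsilon < \beta'$, the resulting bound covers all of $\CC \setminus S_{\beta'}$.

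The core of the argument is this uniform bound. I would apply Proposition \ref{existenceProposition} to obtain, for each $d \in M_A$, a family $(f_d^\lambda)_{\lambda \notin \overline{S_\auxangle}} \subset H(A)$ making $(R_\auxfunct^\lambda)$ $\varepsilon$-uniformly bounded at $d$, and write $R_\auxfunct^\lambda(A) = \sum_{d \in M_A} f_d^\lambda(A) + \big(R_\auxfunct^\lambda - \sum_{d\in M_A} f_d^\lambda\big)(A)$. The finite sum is uniformly bounded by property (b) of Definition \ref{boundingFunctionsDef}. By Remark \ref{integrationRemark} the remainder belongs to $\mathcal{E}_0$, so it equals the Cauchy integral $\frac{1}{2\pi i}\int_\Gamma \big(R_\auxfunct^\lambda - \sum_d f_d^\lambda\big)(z)\,R(z,A)\,dz$ along an admissible path $\Gamma$ for the relevant NFC. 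Splitting $\Gamma$ into the pieces $\Gamma \cap \Omega_d$ near each $d \in M_A$ and the complementary compact bulk, one bounds the near-singularity pieces uniformly using property (d) for $f_d^\lambda$ and property (c) for the $f_{d'}^\lambda$ with $d' \neq d$, while on the bulk $R_\auxfunct^\lambda$ (by Lemma \ref{IneqLemma}) and each $f_d^\lambda$ (by property (a)) are uniformly bounded and $\|R(z,A)\|_{\mathcal L(X)}$ is bounded along a path of finite length. Summing these contributions yields the desired uniform bound and completes the proof.

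The \emph{main obstacle} is not in this theorem but in Proposition \ref{existenceProposition}, which supplies the regularizing families $f_d^\lambda$ and their delicate estimates; the present proof is essentially an assembly of those estimates. Within the assembly, the point demanding the most care is the integral remainder: one must verify that the decomposition of $\Gamma$ is compatible with whichever variant of the NFC (bounded, $d$-invertible, or full) is dictated by $M_A = \{-a,a,\infty\}\cap \widetilde{\sigma}(A)$, and that the neighborhoods $\Omega_d$ furnished by Definition \ref{boundingFunctionsDef} can be taken pairwise disjoint so that the near-singularity and bulk estimates do not interfere. By contrast, the bookkeeping relating $\finalangle$, $\auxangle$, $\varepsilon$ and $\beta'$ is routine once the uniform integral bound is secured.
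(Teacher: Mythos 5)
Your proposal is correct and follows essentially the same route as the paper's proof: spectral inclusion via Proposition \ref{spectralInclusionProposition}, the identity $\lambda R(\lambda,\finalfunct(A)) = R_{\auxfunct}^{\lambda}(A)$ from Lemma \ref{functionalCalculusProperties}, the decomposition $R_{\auxfunct}^{\lambda} = \sum_{d} f_d^{\lambda} + \bigl(R_{\auxfunct}^{\lambda} - \sum_{d} f_d^{\lambda}\bigr)$ using Proposition \ref{existenceProposition} and Remark \ref{integrationRemark}, and the same splitting of the contour into neighborhoods of the points of $M_A$ and a bulk piece. The only difference is cosmetic bookkeeping (you fix $\beta'$ and intersect over $\auxangle \downarrow \finalangle$, the paper invokes the infimum characterization of the sectoriality angle), and you correctly attribute the uniform bound on $\|f_d^{\lambda}(A)\|_{\mathcal L(X)}$ to property (b) of Definition \ref{boundingFunctionsDef}.
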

	
	\begin{proof}
		Our result will follow once we have proven that $\finalfunct(A)$ is a sectorial operator of angle $\auxangle$ for all $\auxangle > \finalangle$. Indeed, if that's true, we will have that
		\begin{align*}
			\finalangle \geq \inf_{\auxangle \in [0,\pi)} \{\auxangle \, : \, \finalfunct(A) \text{ is sectorial of angle } \auxangle\},
		\end{align*}
		which implies that $\finalfunct(A)$ is a sectorial operator of angle $\finalangle$, see the beginning of \cite[Section 2.1]{haase2006functional}.
		
		Indeed,  let $\auxangle > \finalangle$ and set $\auxfunct := \finalfunct|_{BS_{\varphi,a}}$, where $\varphi\in (0,\omega)$ is chosen such that $\mathcal{R}(\auxfunct) \subset \overline{S_\auxangle}\cup \{\infty\}$. Notice that $\auxfunct(A) = \finalfunct(A)$. Now, the spectral inclusion $\widetilde{\sigma}(\auxfunct(A)) \subset \overline{S_\auxangle} \cup \{\infty\}$ holds by Proposition \ref{spectralInclusionProposition}. It remains to prove the bound for the resolvent. More precisely,  we have to show that for all $\varepsilon > 0$, 
		\begin{align*}
			\sup_{\lambda \notin \overline{S_{\auxangle+\varepsilon}}} \| \lambda R(\lambda, \auxfunct(A))\|_{\mathcal L(X)} < \infty.
		\end{align*}
		By Lemma \ref{functionalCalculusProperties}, it follows that $\lambda R(\lambda, \auxfunct(A)) = R_\auxfunct^\lambda (A)$ for all $\lambda \notin \overline{S_\auxangle}$. Also, by Proposition \ref{existenceProposition}, we have that for each $d\in M_A$, there exist some families of functions $(f_d^\lambda)_{\lambda \notin \overline{S_{\auxangle}}}$ which make $(R_\auxfunct^\lambda)_{\lambda \notin \overline{S_{\auxangle}}}$ $\varepsilon$-uniformly bounded at $d$ with respect to the NFC of $A$. We have that
		\begin{align}\label{hrg}
			\|\lambda R(\lambda, \auxfunct(A))\|_{\mathcal L(X)} & \leq \left\|\left(R_\auxfunct^\lambda - \sum_{d\in M_A} f_d^\lambda\right)(A)\right\|_{\mathcal L(X)} + \sum_{d\in M_A} \left\|  f_d^\lambda (A)\right\|_{\mathcal L(X)}.
		\end{align}
		By Property (c) in Definition \ref{boundingFunctionsDef}, one has that $\sup_{\lambda \notin \overline{S_{\auxangle+\varepsilon}}} \|f_d^\lambda (A)\|_{\mathcal L(X)} < \infty$ for each $d \in M_A$.  It remains to uniformly bound the first term.
		
		Let $\Gamma$ be an integration path for the primary functional calculus of $A$, and $(\Omega_d)_{d\in M_A}$ some appropriate open sets for which $d\in \Omega_d$ and the uniform integral bounds of Definition \ref{boundingFunctionsDef} hold for each $(f_d^\lambda)_{\lambda \notin \overline{S_\auxangle}}$. Since $(R_\auxfunct^\lambda - \sum_{d\in M_A} f_d^\lambda) \in \mathcal{E}_0$ (see Remark \ref{integrationRemark}), one has that
		\begin{align}\label{hrg-2}
			\left\|\left(R_\auxfunct^\lambda - \sum_{d\in M_A} f_d^\lambda\right)(A)\right\|_{\mathcal L(X)}  \leq \int_\Gamma \left|R_\auxfunct^\lambda (z) - \sum_{d\in M_A} f_d^\lambda (z) \right| \|R(z,A)\|_{\mathcal L(X)}\, |dz|.
		\end{align}
		Next, we split the integral on $\Gamma$ to the sum of integrals on $\Gamma \cap \Omega_d$ for each $d \in M_A$, and $\Gamma \backslash \left(\cup_{d\in M_A} \Omega_d\right)$. Notice that by Property (b) in Definition \ref{boundingFunctionsDef} and Lemma \ref{IneqLemma}, $|f_d^\lambda(z)|$ and $|R_\auxfunct^\lambda(z)|$ are uniformly bounded. Thus, 
		\begin{align*}
			&\sup_{\lambda \notin \overline{S_{\auxangle+\varepsilon}}} \int_{\Gamma \backslash \left(\cup_{d\in M_A} \Omega_d\right)} \left|R_\auxfunct^\lambda (z) - \sum_{d'\in M_A} f_{d'}^\lambda (z) \right| \|R(z,A)\|_{\mathcal L(X)}\, |dz| 
			\lesssim  \int_{\Gamma \backslash \left(\cup_{d\in M} \Omega_d\right)} \|R(z,A)\|_{\mathcal L(X)}\, |dz| < \infty.
		\end{align*}
		Finally, for each $d \in M_A$, one has that
		\begin{align*}
			&\sup_{\lambda \notin \overline{S_{\auxangle+\varepsilon}}} \int_{\Gamma \cap \Omega_d} \left|R_\auxfunct^\lambda (z) - \sum_{d'\in M_A} f_{d'}^\lambda (z) \right| \|R(z,A)\|_{\mathcal L(X)}\, |dz|
			\\ \leq  & \sup_{\lambda \notin \overline{S_{\auxangle+\varepsilon}}} \sum_{d' \in M_A\backslash\{d\}} \int_{\Gamma \cap \Omega_d} |f_{d'}^\lambda (z)| \|R(z,A)\|_{\mathcal L(X)}\, |dz|
			+ \sup_{\lambda \notin \overline{S_{\auxangle+\varepsilon}}} \int_{\Gamma \cap \Omega_d} \left|R_\auxfunct^\lambda (z) - f_d^\lambda (z) \right| \|R(z,A)\|_{\mathcal L(X)}\, |dz| .
		\end{align*}
		But,  these two supremums of the integrals are finite by Properties (c) and (d) in Definition \ref{boundingFunctionsDef},  respectively. Combining these estimates with \eqref{hrg}-\eqref{hrg-2} we get the resolvent bound, and as a consequence $\auxfunct(A) = \finalfunct(A)$ is a sectorial operator of angle $\auxangle$ for all $\auxangle > \finalangle$. The proof is finished.
	\end{proof}
	
	%We obtain the following particular case for $g(z) = (z-a)^\alpha$ and $\omega = \frac{\pi}{2}$.
	The following corollaries are immediate consequences of Theorem \ref{sectorialTheorem}.  The first one has been already stated in \cite[Theorem 4.6]{baeumer2009unbounded} for the particular case where $-A$ generates a bounded group.
	
	\begin{corollary}\label{StripCor}
		Let $a\geq 0$,  $A \in \BSect(\pi/2, a)$, and let $\alpha > 0$ with $\alpha$ not an odd number, so $\alpha \in (2n-1, 2n+1)$ for a unique $n\in \NN$. Then, for any $\varepsilon >0$,  there exists  $\rho \geq 0$ such that $\rho  I + (-1)^{n} (A+a I)^\alpha$ is a sectorial operator of angle $\pi\left|\frac{\alpha}{2} - n\right|+ \varepsilon$. Moreover, if $a = 0$ then we can take $\rho = 0$.
	\end{corollary}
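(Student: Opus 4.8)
The plan is to apply Theorem \ref{sectorialTheorem} to the function
$$\finalfunct(z) := \rho + (-1)^n (z+a)^\alpha,$$
where $(z+a)^\alpha$ is the principal branch, which is holomorphic on every $BS_{\varphi,a}$ with $\varphi<\pi/2$ because the shift $z\mapsto z+a$ maps such a bisector into $S_{\pi-\varphi}\subset\CC\setminus(-\infty,0]$, and to take $\finalangle:=\pi\left|\frac{\alpha}{2}-n\right|+\varepsilon$. Writing $\delta:=\alpha-2n\in(-1,1)$, note that $\pi|\frac{\alpha}{2}-n|=\frac{\pi}{2}|\delta|<\frac{\pi}{2}$, so the target angle is admissible (I may assume $\varepsilon$ small). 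Since $\finalfunct(A)=\rho I+(-1)^n(A+aI)^\alpha$, the corollary follows once the hypotheses of Theorem \ref{sectorialTheorem} are verified.

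First I would check $\finalfunct\in\mathcal{M}_A$ together with hypotheses (b)--(c). The power $(z+a)^\alpha$ is quasi-regular at $M_A$: at $\pm a$ it has the finite limits $\rho$ and $\rho+(-1)^n(2a)^\alpha$, with $|(\finalfunct(z)-c_{\pm a})/(z\mp a)|\sim|z\mp a|^{\alpha-1}$ integrable near $\pm a$ (so $\finalfunct$ is regular there), while at $\infty$ one has $\finalfunct\to\infty$ and $1/\finalfunct$ is regular; this gives (b). For (c), the only points of $M_A$ where $\finalfunct\in\{0,\infty\}$ are $\infty$ (and $-a$ when $\rho=0$), where $\finalfunct$ blows up, respectively vanishes, exactly polynomially of order $\alpha$; taking $\rho$ large also secures $\finalfunct(a)\neq 0$. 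Regularizability is then obtained exactly as in Lemma \ref{differentNFCLemma}: multiplying $\finalfunct$ by a sufficiently high negative power of $(b^2-z^2)$ lands it in $\mathcal{E}_0$, the injectivity of the regularizer being guaranteed by working in the extended calculus of Section \ref{extensionCalculusSubsection} dictated by $M_A$ (equivalently, $(A+aI)^\alpha$ is the fractional power of the sectorial operator $A+aI$). Hence $\finalfunct\in\mathcal{M}_A$.

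Hypothesis (a) is the crux. Set $w=z+a$ and observe $\arg((-1)^n w^\alpha)\equiv\alpha\arg w+n\pi\pmod{2\pi}$. Fix $\eta\in(0,\varepsilon/(2\alpha))$ and split the directions in $BS_{\varphi,a}$ according to $\arg w$. For the \emph{good} directions $|\arg w\mp\frac{\pi}{2}|<\eta$, the reduced argument $\theta_0:=\arg((-1)^n w^\alpha)$ satisfies $|\theta_0\mp\frac{\delta\pi}{2}|=\alpha|\arg w\mp\frac{\pi}{2}|<\varepsilon/2$, so $|\theta_0|<\frac{\pi}{2}|\delta|+\frac{\varepsilon}{2}<\frac{\pi}{2}$; writing $\finalfunct=\rho+se^{i\theta_0}$ with $s=|w|^\alpha\geq 0$ and using that $\tan(\arg(\rho+se^{i\theta_0}))=\frac{s\sin\theta_0}{\rho+s\cos\theta_0}$ has the sign of $\theta_0$ and modulus at most $|\tan\theta_0|$, we obtain $|\arg\finalfunct|\leq|\theta_0|<\pi|\frac{\alpha}{2}-n|+\frac{\varepsilon}{2}$, \emph{independently of $\rho\geq0$}. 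For the \emph{bad} directions $|\arg w|\leq\frac{\pi}{2}-\eta$, these are confined to a bounded set $\{|w|\leq R_0(a,\eta)\}$ uniformly for $\varphi$ close to $\pi/2$, since the asymptotic opening of $BS_{\varphi,a}$ at $\infty$ is the narrow cone $\arg w\in(\varphi,\pi-\varphi)$ about $\pm\frac{\pi}{2}$ which excludes them; thus $|(-1)^n w^\alpha|\leq R_0^\alpha$, and choosing $\rho>R_0^\alpha/\sin\varepsilon$ gives $|\arg\finalfunct|<\varepsilon$. Picking finally $\varphi$ close to $\pi/2$ so that the opening $(\varphi,\pi-\varphi)$ falls inside the good band, we conclude $\finalfunct(BS_{\varphi,a})\subset\overline{S_{\pi|\frac{\alpha}{2}-n|+\varepsilon}}$, which is (a) (the same $\varphi$ serving every $\auxangle>\finalangle$). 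Theorem \ref{sectorialTheorem} then yields that $\rho I+(-1)^n(A+aI)^\alpha$ is sectorial of angle $\pi|\frac{\alpha}{2}-n|+\varepsilon$.

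When $a=0$ the bisector $BS_{\varphi,0}$ is a double cone about $i\RR$, so every direction has $|\arg w|\in(\varphi,\pi-\varphi)$ and lies in the good band once $\varphi$ is close to $\pi/2$; there are no bad directions, the modulus is irrelevant, and the good-direction estimate applies verbatim with $\rho=0$ (now $\finalfunct=(-1)^n z^\alpha$ maps directly into $\overline{S_{\pi|\frac{\alpha}{2}-n|+\varepsilon}}$ and is quasi-regular with exactly polynomial limits at $M_A=\{0,\infty\}\cap\widetilde{\sigma}(A)$), which proves that $\rho=0$ is admissible. The main obstacle is organizing the choices of $\eta$, $\varphi$ and $\rho$ in hypothesis (a): because $\finalfunct=\rho(1+(-1)^n(w/\rho^{1/\alpha})^\alpha)$ is scale invariant, enlarging $\rho$ alone cannot tame the argument, and the argument only closes because the good-direction bound is $\rho$-free while the bad directions are trapped in a $\rho$-independent bounded set of $w$.
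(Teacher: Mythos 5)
Your proposal is correct and follows exactly the route the paper intends: Corollary \ref{StripCor} is stated there as an immediate consequence of Theorem \ref{sectorialTheorem} applied to $\finalfunct(z)=\rho+(-1)^n(z+a)^\alpha$, and you supply precisely the verification of hypotheses (a)--(c) that the paper leaves implicit. Your splitting into ``good'' directions near $\arg(z+a)=\pm\pi/2$ (where the bound is $\rho$-free) and ``bad'' directions trapped in a bounded set (handled by taking $\rho$ large) is the right way to make hypothesis (a) work, and the $a=0$ case with $\rho=0$ comes out as you describe.
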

	
	\begin{corollary}\label{bisectorialCor}
		Let $0<  \omega \leq \frac{\pi}{2}$ and $a \geq 0$. Let $A \in \BSect(\omega,a)$ in a Banach space $X$ and $\finalfunct \in \mathcal{M}_A$. Assume that there are  $\finalangle\in [\pi/2,\pi)$ and $b\geq0$ such that the following hold:
		\begin{enumerate}[(a)]
			\item For any $\auxangle > \finalangle$, one can find $\varphi \in (0, \omega)$ for which $g(BS_{\varphi,a}) \subset \overline{BS}_{\auxangle,b}$.
			\item $\finalfunct$ is quasi-regular at $M_A$.
			\item $\finalfunct$ has exactly polynomial limits at $M_A \cap \finalfunct^{-1}(\{-b,b,\infty\})$.
		\end{enumerate} 
		Then, $\finalfunct(A)$ is a bisectorial operator of angle $\pi-\finalangle$ and half-width $b$. 
	\end{corollary}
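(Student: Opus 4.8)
The plan is to deduce the statement from two applications of Theorem~\ref{sectorialTheorem}. Recall the observation made right after the definition of bisectorial operators: a closed operator $B$ belongs to $\BSect(\pi-\finalangle,b)$ if and only if both $bI+B$ and $bI-B$ are sectorial of angle $\pi-(\pi-\finalangle)=\finalangle$. I would apply this with $B=\finalfunct(A)$. Since the constant function is handled by the calculus and $bI=b\,\mathbf{1}(A)\in\mathcal L(X)$, Lemma~\ref{functionalCalculusProperties}(c) yields the identities $(b+\finalfunct)(A)=bI+\finalfunct(A)$ and $(b-\finalfunct)(A)=bI-\finalfunct(A)$, with $b\pm\finalfunct\in\mathcal M_A$ because any regularizer of $\finalfunct$ also regularizes $b\pm\finalfunct$. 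Consequently it suffices to show that each of the two functions $h\in\{b+\finalfunct,\,b-\finalfunct\}$ satisfies hypotheses (a)--(c) of Theorem~\ref{sectorialTheorem} with target angle $\finalangle$, so that $(b+\finalfunct)(A)$ and $(b-\finalfunct)(A)$ are both sectorial of angle $\finalangle$, whence $\finalfunct(A)\in\BSect(\pi-\finalangle,b)$.

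For hypothesis (a) of Theorem~\ref{sectorialTheorem} I would use the geometry encoded in the equivalence above: by the very definition $\overline{BS}_{\auxangle,b}=(-b+\overline{S_{\auxangle}})\cap(b-\overline{S_{\auxangle}})$, a point $z$ lies in $\overline{BS}_{\auxangle,b}$ exactly when both $b+z$ and $b-z$ lie in $\overline{S_{\auxangle}}$, so that $b\pm\overline{BS}_{\auxangle,b}\subset\overline{S_{\auxangle}}$. Thus, given $\auxangle>\finalangle$, the $\varphi\in(0,\omega)$ provided by hypothesis (a) of the corollary satisfies $\finalfunct(BS_{\varphi,a})\subset\overline{BS}_{\auxangle,b}$, and applying the two inclusions gives $(b\pm\finalfunct)(BS_{\varphi,a})\subset\overline{S_{\auxangle}}\cup\{\infty\}$ \emph{simultaneously} and with the same $\varphi$. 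This is precisely hypothesis (a) of Theorem~\ref{sectorialTheorem} for both choices of $h$.

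Hypotheses (b) and (c) amount to bookkeeping at the points of $M_A$. For (b), the remark following the definition of quasi-regularity shows that $\mu-\finalfunct$ is quasi-regular at $M_A$ for every $\mu\in\CC$; since quasi-regularity is plainly invariant under $\finalfunct\mapsto-\finalfunct$, both $b-\finalfunct$ and $b+\finalfunct=b-(-\finalfunct)$ are quasi-regular at $M_A$. For (c), observe that at a point $d\in M_A$ one has $(b\pm\finalfunct)(d)\in\{0,\infty\}$ exactly when $\finalfunct(d)\in\{\mp b,\infty\}$, so that
\[
M_A\cap(b\pm\finalfunct)^{-1}(\{0,\infty\})\ \subset\ M_A\cap\finalfunct^{-1}(\{-b,b,\infty\}),
\]
which is exactly the set on which hypothesis (c) of the corollary grants $\finalfunct$ an exact polynomial order $\alpha$. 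Where $\finalfunct(z)\to c$ with $c=\mp b$ one has $(b\pm\finalfunct)(z)=\pm(\finalfunct(z)-c)$, hence $|(b\pm\finalfunct)(z)|=|\finalfunct(z)-c|$ tends to $0$ exactly polynomially of order $\alpha$; where $\finalfunct(z)\to\infty$ the bounded shift by $b$ is negligible, so $|(b\pm\finalfunct)(z)|\sim|\finalfunct(z)|$ blows up exactly polynomially of the same order. Thus (c) holds for $h=b\pm\finalfunct$.

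With (a)--(c) verified, Theorem~\ref{sectorialTheorem} gives that $(b+\finalfunct)(A)$ and $(b-\finalfunct)(A)$ are sectorial of angle $\finalangle$, and the opening reduction then yields $\finalfunct(A)\in\BSect(\pi-\finalangle,b)$, i.e. $\finalfunct(A)$ is bisectorial of angle $\pi-\finalangle$ and half-width $b$. I expect the only delicate points to be the functional-calculus identities $(b\pm\finalfunct)(A)=bI\pm\finalfunct(A)$ (regularizability together with the equality in Lemma~\ref{functionalCalculusProperties}(c)) and the transfer of exact polynomial orders in (c); the geometric inclusion in (a) and the quasi-regularity in (b) are immediate.
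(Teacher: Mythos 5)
Your argument is correct and is precisely the route the paper intends: the paper offers no details, merely declaring the corollary an ``immediate consequence'' of Theorem~\ref{sectorialTheorem}, and your reduction via the equivalence $\finalfunct(A)\in\BSect(\pi-\finalangle,b)\iff bI\pm\finalfunct(A)$ sectorial of angle $\finalangle$, together with the verification of hypotheses (a)--(c) for $b\pm\finalfunct$ and the identity $(b\pm\finalfunct)(A)=bI\pm\finalfunct(A)$ from Lemma~\ref{functionalCalculusProperties}, fills in exactly the missing steps. The only caveat is that your reading $\overline{BS}_{\auxangle,b}=(-b+\overline{S_{\auxangle}})\cap(b-\overline{S_{\auxangle}})$ is not literally the paper's definition of $BS_{\omega,a}$ (which uses $S_{\pi-\omega}$ and only covers $\omega\le\pi/2$), but it is the only interpretation under which the hypothesis is coherent for $\auxangle>\finalangle\ge\pi/2$ and under which the conclusion follows, so the resolution is the right one.
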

	
	\subsection{The natural functional calculus for sectorial operators}\label{sectorialSubsection}

	Most proofs which we have presented in this text are generic, and as a consequence, the results shown here will hold for functional calculus analogous to that in Sections \ref{Sec2} and \ref{extensionCalculusSubsection}. In particular, it is straightforward to adapt the preceding results to the case of NFC of sectorial operators that can be found in \cite{haase2005spectral}, where the reader can find the definitions of the appropriate versions of the function spaces $\mathcal{E} (S_\varphi), \mathcal{E}_0 (S_\varphi), \mathcal{M}(S_\varphi)_A$,... For instance, one obtains the following version of Theorem \ref{sectorialTheorem} adapted to this NFC.
	
	\begin{theorem}
		Let $0 \leq  \omega < \pi$, $\finalangle\in [0,\pi)$,  $A$  a sectorial operator of angle $\omega$ in a Banach space $X$,  and $\finalfunct \in \mathcal{M}_A$ in the sense of the NFC for sectorial operators. Assume that the following hold:
		\begin{enumerate}[(a)]
			\item For any $\auxangle > \finalangle$, one can find $\varphi \in (\omega, \pi)$ such that $\finalfunct(S_\varphi) \subset \overline{S_{\auxangle}} \cup\{\infty\}$.
			\item $\finalfunct$ is quasi-regular at $ \{0,\infty\} \cap \widetilde{\sigma}(A)$.
			\item $\finalfunct$ has exactly polynomial limits at $\{0,\infty\} \cap \widetilde{\sigma}(A) \cap \finalfunct^{-1}(\{0,\infty\})$.
		\end{enumerate}   
		Then, $\finalfunct(A)$ is sectorial of angle $\finalangle$.
	\end{theorem}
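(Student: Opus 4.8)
The plan is to transcribe the proof of Theorem~\ref{sectorialTheorem} into the sectorial setting, the only structural change being that the triple of singular points $M_A=\{-a,a,\infty\}\cap\widetilde{\sigma}(A)$ is replaced by the pair $N_A:=\{0,\infty\}\cap\widetilde{\sigma}(A)$, and the bisector boundary by a sector boundary $\partial S_{\varphi'}$. As in that proof, the first reduction is to show that $\finalfunct(A)$ is sectorial of angle $\auxangle$ for \emph{every} $\auxangle>\finalangle$; since
\[
\finalangle\ \geq\ \inf_{\auxangle\in[0,\pi)}\{\auxangle:\finalfunct(A)\text{ is sectorial of angle }\auxangle\},
\]
this yields sectoriality of the sharp angle $\finalangle$ exactly as at the beginning of \cite[Section~2.1]{haase2006functional}. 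Fix such a $\auxangle$ and, using hypothesis~(a), pick $\varphi\in(\omega,\pi)$ with $\finalfunct(S_\varphi)\subset\overline{S_\auxangle}\cup\{\infty\}$; set $\auxfunct:=\finalfunct|_{S_\varphi}$, so that $\auxfunct(A)=\finalfunct(A)$ and $\mathcal R(\auxfunct)\subset\overline{S_\auxangle}\cup\{\infty\}$.

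First I would record the spectral inclusion $\widetilde{\sigma}(\auxfunct(A))\subset\overline{S_\auxangle}\cup\{\infty\}$, which follows from the sectorial analogue of Proposition~\ref{spectralInclusionProposition} (using the quasi-regularity~(b)) together with hypothesis~(a). It then remains only to prove the resolvent estimate $\sup_{\lambda\notin\overline{S_{\auxangle+\varepsilon}}}\|\lambda R(\lambda,\auxfunct(A))\|_{\mathcal L(X)}<\infty$ for every $\varepsilon>0$. By the sectorial version of Lemma~\ref{functionalCalculusProperties}(f) one has $\lambda R(\lambda,\auxfunct(A))=R_\auxfunct^\lambda(A)$ for $\lambda\notin\overline{S_\auxangle}$, where $R_\auxfunct^\lambda(z)=\lambda/(\lambda-\auxfunct(z))$ as in~\eqref{hf}; so the whole problem is to bound $\|R_\auxfunct^\lambda(A)\|_{\mathcal L(X)}$ uniformly.

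The heart of the argument is the sectorial counterpart of Proposition~\ref{existenceProposition}: for each $d\in N_A$ I must produce a family $(f_d^\lambda)_{\lambda\notin\overline{S_\auxangle}}\subset H(A)$ that makes $(R_\auxfunct^\lambda)$ $\varepsilon$-uniformly bounded at $d$ in the sense of Definition~\ref{boundingFunctionsDef}. The building blocks are now the sectorial resolvents $b(bI+A)^{-1}$ (value $1$ at $0$, decaying at $\infty$) and $A(bI+A)^{-1}$ (value $1$ at $\infty$, vanishing at $0$), together with their scaled versions. Concretely, for the regular case $c_d\notin\{0,\infty\}$ I would take
\[
f_{0,c_0}^\lambda(z)=\frac{\lambda}{\lambda-c_0}\,\frac{b}{b+z},\qquad f_{\infty,c_\infty}^\lambda(z)=\frac{\lambda}{\lambda-c_\infty}\,\frac{z}{b+z};
\]
while for a polynomial limit of order $\alpha$, say $c_0=0$ with $|\auxfunct(z)|\sim|z|^\alpha$ near $0$, I would insert the scaling factor $|\lambda|^{1/\alpha}/(|\lambda|^{1/\alpha}+z)$, which corresponds to the uniformly bounded operator $|\lambda|^{1/\alpha}(|\lambda|^{1/\alpha}I+A)^{-1}$ (note that one must approach along the negative axis, since $|\lambda|^{1/\alpha}R(|\lambda|^{1/\alpha},A)$ need not be bounded). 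Properties~(a)--(c) of Definition~\ref{boundingFunctionsDef} are then immediate from boundedness and integrability of these rational factors along $\partial S_{\varphi'}$, exactly as in Steps~1--6 of Proposition~\ref{existenceProposition}. With these families in hand I would split
\[
\|\lambda R(\lambda,\auxfunct(A))\|_{\mathcal L(X)}\leq\Big\|\big(R_\auxfunct^\lambda-\textstyle\sum_{d\in N_A}f_d^\lambda\big)(A)\Big\|_{\mathcal L(X)}+\sum_{d\in N_A}\|f_d^\lambda(A)\|_{\mathcal L(X)},
\]
bound the second sum by property~(b), and estimate the first term by the contour integral $\int_{\partial S_{\varphi'}}|R_\auxfunct^\lambda-\sum_d f_d^\lambda|\,\|R(z,A)\|_{\mathcal L(X)}\,|dz|$, decomposed into neighbourhoods $\Omega_0,\Omega_\infty$ of the singular points and a bounded remainder, using properties~(c)--(d) together with Lemmas~\ref{basicIneqLemma} and~\ref{IneqLemma}, which transfer verbatim since they concern only scalar functions on sectors.

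The main obstacle is precisely the verification of property~(d) in the polynomial-limit cases: showing that $|R_\auxfunct^\lambda(z)-f_d^\lambda(z)|$ is integrable against $\|R(z,A)\|_{\mathcal L(X)}\,|dz|$ uniformly in $\lambda$ near $d$. As in Steps~2, 3, 5 and~6 of Proposition~\ref{existenceProposition}, this reduces, after the two-sided estimates of Lemma~\ref{IneqLemma}, to a bound of the shape $\min\{\sum_j U_\lambda(z)^{-j},\,\sum_j U_\lambda(z)^{j}\}$ with $U_\lambda(z)=|\lambda|^{1/\alpha}|z-d|$, whose uniform integrability is exactly the content of Lemma~\ref{integralBoundLemma}. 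Because $N_A$ has only two points and the sector boundary is simpler than the bisector boundary, the bookkeeping is lighter than in Theorem~\ref{sectorialTheorem}, and no genuinely new estimate is required; the argument is a faithful, and somewhat streamlined, adaptation.
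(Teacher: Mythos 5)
Your proposal is correct and follows exactly the route the paper intends: the paper's own ``proof'' of this theorem is simply the remark that the argument of Theorem \ref{sectorialTheorem} is generic and transfers to the sectorial NFC, and you have carried out that transfer faithfully, with the right replacement of $M_A$ by $\{0,\infty\}\cap\widetilde{\sigma}(A)$, the correct sectorial building blocks $b(bI+A)^{-1}$ and $A(bI+A)^{-1}$, and the correct observation that the scaling resolvents must be taken at points on the negative real axis. No gap.
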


	\section{Some generation results of holomorphic semigroups and their properties}\label{Sec4}

	As a consequence of the bijection between generators of bounded holomorphic semigroups and sectorial operators,  the results obtained in Section \ref{sectorialSection} encourage us to study the properties related to the holomorphic semigroup generated by $-\finalfunct(A)$ whenever $A$ is a bisectorial operator,  and $\finalfunct$ a meromorphic function satisfying the hypothesis of Theorem \ref{sectorialTheorem} with an angle strictly smaller than $\frac{\pi}{2}$. We state explicitly this connection in the following result.
	
	\begin{corollary}\label{holomorphicCor}
		Let $A, \finalangle,\finalfunct$ be as in Theorem \ref{sectorialTheorem}. In addition, assume that $\finalangle \in [0,\pi/2)$.	Then, $-\finalfunct(A)$ generates a bounded holomorphic semigroup $T_\finalfunct$ of angle $\frac{\pi}{2} - \finalangle$.
	\end{corollary}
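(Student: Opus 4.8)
The plan is to invoke the standard equivalence between generators of bounded holomorphic semigroups and sectorial operators, which has already been recorded in the excerpt. Recall from the discussion following Definition \ref{semigroupDef} that, for any $\delta \in (0,\pi/2]$, a closed operator $B$ generates a bounded holomorphic semigroup of angle $\delta$ if and only if $-B$ is a sectorial operator of angle $\pi/2 - \delta$. Thus the entire content of the corollary will reduce to identifying $-\finalfunct(A)$ as a generator by showing that $\finalfunct(A)$ is itself sectorial of the correct angle, and then reading off the semigroup angle from the correspondence.

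First I would apply Theorem \ref{sectorialTheorem} directly. The hypotheses of the corollary are precisely that $A$, $\finalangle$, and $\finalfunct$ satisfy the assumptions (a), (b), (c) of that theorem, with the additional constraint $\finalangle \in [0,\pi/2)$. Therefore Theorem \ref{sectorialTheorem} yields that $\finalfunct(A)$ is a sectorial operator of angle $\finalangle$. This is the substantive step, but it is already done for us: there is nothing further to prove about sectoriality beyond citing the theorem.

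Next I would set $\delta := \frac{\pi}{2} - \finalangle$. Since $\finalangle \in [0,\pi/2)$, we have $\delta \in (0,\pi/2]$, so $\delta$ is an admissible angle for a bounded holomorphic semigroup in the sense of Definition \ref{semigroupDef}. Because $\finalfunct(A)$ is sectorial of angle $\finalangle = \pi/2 - \delta$, the cited equivalence applies with $B = -\finalfunct(A)$: the operator $-\finalfunct(A)$ generates a bounded holomorphic semigroup of angle $\delta = \frac{\pi}{2} - \finalangle$. Denoting this semigroup by $T_\finalfunct$ gives exactly the assertion.

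I do not anticipate a genuine obstacle here, since the corollary is essentially a translation of Theorem \ref{sectorialTheorem} through a known bijection. The only point requiring a moment of care is the bookkeeping of angles: one must confirm that the sectoriality angle $\finalangle$ of $\finalfunct(A)$ matches $\pi/2 - \delta$ so that the semigroup angle comes out as $\frac{\pi}{2} - \finalangle$ rather than its complement, and that the strictness $\finalangle < \pi/2$ is exactly what guarantees $\delta > 0$, i.e. that the semigroup is genuinely holomorphic on an open sector rather than merely a $C_0$-semigroup. Both checks are immediate from the ranges stated in the definitions.
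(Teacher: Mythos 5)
Your proposal is correct and follows exactly the paper's own argument: apply Theorem \ref{sectorialTheorem} to conclude that $\finalfunct(A)$ is sectorial of angle $\finalangle$, then invoke the standard equivalence that $B$ is sectorial of angle $\finalangle<\pi/2$ if and only if $-B$ generates a bounded holomorphic semigroup of angle $\pi/2-\finalangle$. The angle bookkeeping you check is the only point of care, and you have it right.
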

	
	\begin{proof}
		This is an immediate consequence of Theorem \ref{sectorialTheorem}	and the fact that an operator $B$ is sectorial of angle $\finalangle < \frac{\pi}{2}$ if and only if $-B$ is the generator of a bounded holomorphic semigroup of angle $\frac{\pi}{2}-\finalangle$, see for example \cite[Theorem 4.6]{engel2000one} or \cite[Proposition 3.4.4]{haase2006functional}.
	\end{proof}

	%\begin{remark}
	%	We say that a mapping $T : S_\delta \to \mathcal{L}(X)$ is an exponentially bounded holomorphic semigroup if there is some $\rho \in \RR$ such that $w \mapsto \exp(-\rho w) T(w)$ is a bounded holomorphic semigroup. One can define the generator $B$ of an exponentially bounded holomorphic semigroup $T$ in the same way as in \eqref{genInf} or \eqref{resolventOfaSemigroup} for $\Re \lambda > \rho$. It is readily seen that $B$ generates an exponentially bounded holomorphic semigroup if and only if there is some $\rho \in \RR$ such that $B-\rho$ generates a bounded holomorphic semigroup.
	%\end{remark}
	
	The lemma below will be useful in the proof of our main results within the case where either $aI-A$ or $aI+A$ is not injective. Its proof is analogous to the related result for sectorial operators (see e.g.  \cite[Lemma 4.3]{haase2005general}).
	
	\begin{lemma}\label{notInjectivityLemma}
		Let $A \in \BSect(\omega,a),\, g \in \mathcal{M}_A$ with quasi-regular limits at $M_A$, and assume that $d \in \sigma_p(A)$. Then, $g$ has a finite limit $c_d$ at $d$.
	\end{lemma}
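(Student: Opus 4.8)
The plan is to argue by contradiction, exploiting the injectivity of a regularizer of $g$. The lemma is meant to be applied precisely when $aI-A$ or $aI+A$ fails to be injective, so the relevant case is $d\in\{-a,a\}$; then $d$ is a finite point of $\sigma_p(A)\subset\widetilde{\sigma}(A)$, hence $d\in M_A$, and the quasi-regularity of $g$ at $M_A$ already furnishes a limit $c_d\in\CC_\infty$ of $g$ at $d$. Thus it suffices to exclude $c_d=\infty$. (For a finite eigenvalue $d\notin\{-a,a\}$ the function $g$ is meromorphic near $d$, and the argument below shows it cannot have a pole there, so the conclusion is immediate in that case.)

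First I would establish the following ``eigenvalue character'' identity: if $x\neq0$ satisfies $Ax=dx$, then $R(z,A)x=(z-d)^{-1}x$ for all $z\in\rho(A)$, and consequently
\[
    h(A)x=\Big(\lim_{z\to d}h(z)\Big)x \qquad\text{for every elementary } h\in\mathcal{E}_A .
\]
For the resolvent summands this is immediate from $\Phi(1/(b-z))=R(b,A)$ and $\Phi(1/(b+z))=-R(-b,A)$ together with the eigenvector relation. For a summand $e_0\in\mathcal{E}_0$, whose limit at $d$ is $0$, it reduces to the scalar statement $\frac{1}{2\pi i}\int_\Gamma \frac{e_0(z)}{z-d}\,dz=0$, where $\Gamma$ is the integration path of the NFC of $A$. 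The point $d$ is the vertex of the bisector, so it lies on $\Gamma$ but not in the open region $BS_{\omega',a}$ that $\Gamma$ bounds, while $e_0(z)/(z-d)$ is holomorphic on that region; Cauchy's theorem then gives the vanishing. The only delicate issue is the corner $d$ itself, which I would handle by excising a small arc of radius $\varepsilon$ about $d$ and letting $\varepsilon\to0$: parametrizing the arc gives a contribution $\int e_0(d+\varepsilon e^{i\theta})\,i\,d\theta\to0$ because $e_0(z)\to0$ as $z\to d$, while the integrability condition defining $\mathcal{E}_0$ ensures absolute convergence of the full integral.

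With this identity at hand the argument concludes quickly. Suppose, for contradiction, that $c_d=\infty$, i.e.\ $|g(z)|\to\infty$ as $z\to d$, and choose a regularizer $e\in\mathcal{E}_A$ with $e(A)$ injective and $eg\in\mathcal{E}_A$. Since every function in $\mathcal{E}_A$ has a finite limit at $d$, so does $eg$; combined with $|g(z)|\to\infty$ this forces $e=(eg)/g\to0$ as $z\to d$. Applying the identity above to $e$ yields $e(A)x=(\lim_{z\to d}e(z))\,x=0$ with $x\neq0$, contradicting the injectivity of $e(A)$. Hence $c_d\in\CC$, which is the assertion. The same computation rules out a pole of $g$ at any finite eigenvalue (a pole would force the regularizer to vanish at that point, again contradicting injectivity), settling the case $d\notin\{-a,a\}$ as well.

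I expect the scalar contour computation at the vertex $d$ to be the only genuine obstacle: one must justify Cauchy's theorem on the unbounded bisectorial region when the boundary singularity sits exactly at the corner, using the $\mathcal{E}_0$-integrability condition both to guarantee convergence and to annihilate the excised-arc term. Once the character identity $h(A)x=(\lim_{z\to d}h(z))\,x$ is established, the remainder of the proof is purely formal.
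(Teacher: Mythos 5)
Your proof is correct and follows essentially the same route as the paper's: both rest on the eigenvector identity $e(A)x=e(d)x$ for a regularizer $e$ of $g$, from which the injectivity of $e(A)$ forces $\lim_{z\to d}e(z)\neq 0$ and hence a finite limit for $g=(eg)/e$ --- you merely phrase this last step contrapositively. Your contour computation at the corner $d$ (excising a small arc and using the $\mathcal{E}_0$-integrability together with $e_0(z)\to 0$ as $z\to d$) supplies the detail behind the step the paper dispatches with ``one can easily check.''
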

	
	\begin{proof}
		First, we have that the limit $c_d$ of $g$ at $d$ exists in $\CC_\infty$. Indeed, this is clear if $d \notin M_A$. And if $d \in M_A$, by hypothesis $g$ is quasi-regular at $d \in M_A$, and therefore has a limit $c_d \in \CC_\infty$.
		
		Second, let $e \in \mathcal{E}$ be a regularizer for $g$, so $eg \in \mathcal{E}$ and $e(A)$ is injective. One can easily check that $e(A)x = e(d)x$ for all $x \in \mathcal{N}(dI-A)$. This implies that $e(d) \neq 0$. Since $eg \in \mathcal{E}$,  we have that $g$ has a finite limit $c:=g(a)$ at $a$,  and the assertion follows.
	\end{proof}
	
	Recall that the space of strong continuity $\mathbb{D}_T$ of a (holomorphic) semigroup $T$ generated by $A$ is precisely $\overline{\mathcal{D}(A)}$. The following result characterizes the space $\mathbb D_T$ in our setting.  Let us point out that the result holds even if the angle of sectoriality $\finalangle$ of $\finalfunct(A)$ is greater or equal than $\frac{\pi}{2}$.
	
	% see for example \cite[Section 3.4]{haase2006functional}. 
	\begin{proposition}\label{domainProp}
		Let $A, \finalfunct$ be as in Theorem \ref{sectorialTheorem}. If $ \finalfunct^{-1}(\infty) \cap M_A = \emptyset$, then $\overline{\mathcal{D}(\finalfunct(A))} = X$. Otherwise,
		\begin{align*}
			\overline{\mathcal{D}(\finalfunct(A))} = \bigcap_{d \in \finalfunct^{-1}(\infty) \cap M_A} \overline{\mathcal{R}(dI-A)},
		\end{align*}
		where $\mathcal{R}(\infty I-A) := \mathcal{D}(A)$.
		%\begin{align*}
		%	\mathcal{D}(g(A)) \supset \mathcal{R}(A^{-n_\infty} (a-A)^{n_a} (a+A)^{n_{-a}}),
		%\end{align*}
		%where $n_d = 0$ if $d \notin g^{-1}(\infty) \cap M_A$.
	\end{proposition}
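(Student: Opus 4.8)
The plan is to characterize $\overline{\mathcal D(\finalfunct(A))}$ through the approximate identity $R_\finalfunct^{-t}(A)$, $t>0$, and to read off the limit behavior from the location of the points where $\finalfunct$ blows up. Write $S:=\finalfunct^{-1}(\infty)\cap M_A$. By Theorem \ref{sectorialTheorem}, $\finalfunct(A)$ is sectorial of angle $\finalangle<\pi$, so $-t\in\rho(\finalfunct(A))$ for every $t>0$, the operators $R_\finalfunct^{-t}(A)=t\,(tI+\finalfunct(A))^{-1}$ are uniformly bounded (Lemma \ref{functionalCalculusProperties} identifies them with the NFC of $R_\finalfunct^{-t}(z)=t/(t+\finalfunct(z))$, cf. \eqref{hf}), and the standard description of the closure of the domain of a sectorial operator gives
\begin{equation*}
\overline{\mathcal D(\finalfunct(A))}=\bigl\{x\in X:\ R_\finalfunct^{-t}(A)x\to x\ \text{as}\ t\to\infty\bigr\}.
\end{equation*}
If $S=\emptyset$, then $\finalfunct$ has finite limits at every point of $M_A$, and by condition (a) together with Remark \ref{openMappingRemark} it has no poles in $BS_{\varphi,a}$; Lemma \ref{differentNFCLemma} then yields $\finalfunct(A)\in\mathcal L(X)$, so $\mathcal D(\finalfunct(A))=X$ and there is nothing to prove. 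Assume $S\neq\emptyset$ from now on.

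For the inclusion $\overline{\mathcal D(\finalfunct(A))}\subseteq\bigcap_{d\in S}\overline{\mathcal R(dI-A)}$ I will show that $\mathcal R(R_\finalfunct^{-t}(A))\subseteq\overline{\mathcal R(dI-A)}$ for every $d\in S$ and every $t>0$; since each $\overline{\mathcal R(dI-A)}$ is closed and $R_\finalfunct^{-t}(A)x\to x$ on the domain closure, the inclusion follows. Fix a finite $d\in S\cap\{-a,a\}$. By Lemma \ref{notInjectivityLemma}, $d\notin\sigma_p(A)$, so $dI-A$ is an injective sectorial operator, and $y\in\overline{\mathcal R(dI-A)}$ is equivalent to $\varepsilon R(d\pm\varepsilon,A)y\to0$ as $\varepsilon\downarrow0$. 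Applying this to $y=R_\finalfunct^{-t}(A)x$ and writing $\varepsilon R(d\pm\varepsilon,A)R_\finalfunct^{-t}(A)$ through the NFC of $A$, the limit $\varepsilon\downarrow0$ is taken by dominated convergence in the defining contour integral: near $d$ one has $|R_\finalfunct^{-t}(z)|\lesssim t\,|z-d|^{\alpha_d}$ by the exactly polynomial blow-up of $\finalfunct$ (hypothesis (c) and Definition \ref{polynomialLimits}), which makes the product with the $|z-d|^{-1}$ growth of $\|R(z,A)\|_{\mathcal L(X)}$ integrable, while the cut-off factor tends to $0$ pointwise off $d$. The point $d=\infty$ is treated the same way with $sR(s,A)$ as $s\to\infty$ and the convention $\mathcal R(\infty I-A)=\mathcal D(A)$, using $|R_\finalfunct^{-t}(z)|\lesssim t\,|z|^{-\alpha_\infty}$ at infinity.

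For the reverse inclusion I will produce, in the spirit of Proposition \ref{existenceProposition}, a decomposition
\begin{equation*}
I-R_\finalfunct^{-t}(A)=\sum_{d\in S}u_d^{t}(A)+v^{t}(A),
\end{equation*}
where $u_d^{t}=\bigl(1-R_\finalfunct^{-t}\bigr)\chi_d$ is localized at $d$ by a fixed rational cut-off $\chi_d$ of the type used in Proposition \ref{existenceProposition}, and $v^{t}$ gathers the remaining terms, which are uniformly bounded and tend to $0$ pointwise off $S$; a dominated-convergence estimate (Lemmas \ref{IneqLemma} and \ref{integralBoundLemma}) then gives $v^{t}(A)\to0$ strongly on $X$. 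Since $|\finalfunct/(t+\finalfunct)|\lesssim1$ uniformly in $t$, the family $u_d^{t}(A)$ is uniformly bounded, so it suffices to show $u_d^{t}(A)x\to0$ on the dense set $\mathcal R(dI-A)$ (resp. $\mathcal D(A)$ when $d=\infty$): on $x=(dI-A)w$ the function $u_d^{t}(z)(d-z)$ acquires the factor $(d-z)$, which offsets the resolvent growth at $d$ and forces the integrand to $0$ pointwise as $t\to\infty$, whence $u_d^{t}(A)x\to0$ by dominated convergence. Consequently, for $x\in\bigcap_{d\in S}\overline{\mathcal R(dI-A)}$ every summand tends to $0$, so $R_\finalfunct^{-t}(A)x\to x$ and $x\in\overline{\mathcal D(\finalfunct(A))}$.

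The main obstacle is the uniform (in the auxiliary parameter) integrable domination required to pass to the limit inside the contour integrals simultaneously at all points of $M_A$. In particular, at the points of $M_A\setminus S$ the function $R_\finalfunct^{-t}$ has finite nonzero limits, so the relevant integrals must be taken in the extended NFC by regularizing at those points before applying dominated convergence; the necessary bounds are produced exactly as the bounding functions of Definition \ref{boundingFunctionsDef} and Proposition \ref{existenceProposition}, the polynomial-limit hypothesis (c) controlling $R_\finalfunct^{-t}$ near each $d\in S$ and Lemma \ref{integralBoundLemma} absorbing the resulting scale-invariant integrals. The endpoint $d=\infty$, handled throughout via the convention $\mathcal R(\infty I-A)=\mathcal D(A)$ and the resolvent-based approximate identity $sR(s,A)$, fits the same template.
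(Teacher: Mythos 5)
Your argument is correct in its overall strategy, but it is a genuinely different route from the one the paper takes, in both inclusions. The paper proves $\supset$ by building the explicit multiplicative regularizers $h_t$ of \eqref{regularisingFunction}, showing $\mathcal{R}(h_t(A))\subset\mathcal{D}(\finalfunct(A))$ and then invoking the known convergence $h_t(A)x\to x$ on $\bigcap_d\overline{\mathcal{R}(dI-A)}$ from the sectorial theory of $aI\pm A$; it proves $\subset$ by embedding $\mathcal{D}(\finalfunct(A))$ into $\mathcal{R}((dI-A)^{\alpha'})$ via the regularizer $(1+\finalfunct)^{-1}$ and then using fractional powers and the composition rule $f_{1/\alpha'}((aI+A)^{\alpha'})=aI+A$ to conclude $\overline{\mathcal{R}((dI-A)^{\alpha'})}\subset\overline{\mathcal{R}(dI-A)}$. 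You instead run everything through the single approximate identity $R_\finalfunct^{-t}(A)=t(t+\finalfunct(A))^{-1}$ of the sectorial operator $\finalfunct(A)$, transferring its convergence to and from the approximate identities $\varepsilon R(d\pm\varepsilon,A)$ of $aI\pm A$ by dominated convergence on the defining contour integrals; this avoids fractional powers and the composition rule entirely and recycles only Lemmas \ref{IneqLemma} and \ref{integralBoundLemma} and the localization devices of Proposition \ref{existenceProposition}, which makes the proof more self-contained. The price is that every limit ($\varepsilon\downarrow0$ in one direction, $t\to\infty$ in the other) must be justified inside the extended NFC after subtracting rational cutoffs at each point of $M_A$ where the integrand fails to vanish; you flag this but do not carry it out, and two details deserve explicit care: the reduction to $\finalfunct$ having limit $0$ at $M_A\setminus\finalfunct^{-1}(\infty)$ (which the paper performs at the outset) is still implicitly needed for your $v^t$ term, and the function $u_d^t(z)(d-z)$ does not decay at $\infty$ for finite $d$, so one should test on the dense set $\mathcal{R}((dI-A)R(b,A))$ (or insert a factor $(b-z)^{-1}$ into $\chi_d$) rather than on raw $\mathcal{R}(dI-A)$. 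These are repairable technicalities of the same flavor as those the paper itself handles, not gaps in the strategy.
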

	%We shall point out that one could take fractional power of order $\alpha_d+\varepsilon$ above, instead of entire powers $n_d$. We avoid doing that because the proof would get messier, being the above result enough for our purposes.
	
	\begin{proof}
		First of all, notice that if $\finalfunct^{-1}(\infty) \cap M_A = \emptyset$, then $\finalfunct^{-1}(\infty) = \emptyset$ (see Remark \ref{openMappingRemark}), so by the inclusion of the spectra (Proposition \ref{spectralInclusionProposition}), $\finalfunct(A) \in \mathcal{L}(X)$ and $\mathcal{D}(\finalfunct(A)) = X$.
		
		Let $d \in M_A$. Note that if $d \notin \finalfunct^{-1}(\infty)$, then $\finalfunct$ is regular at $d$ with $\finalfunct(d) \in \CC$. For any $b>a$, consider
		\begin{equation*}
			f_d(z) := \begin{cases}
				\displaystyle	\finalfunct(a)\frac{b^2-a^2}{2a}\frac{a+z}{b^2-z^2}, \quad & \mbox{ if }d = a,
				\\ \displaystyle \finalfunct(-a)\frac{b^2-a^2}{2a}\frac{a-z}{b^2-z^2}, \quad &  \mbox{ if }d = -a,
				\\ \displaystyle \finalfunct(\infty)\frac{a^2-z^2}{b^2-z^2}, \quad&  \mbox{ if }d = \infty.
			\end{cases}
		\end{equation*}
		Then, $\finalfunct-f_d$ is regular at $d$ with limit $0$, and the behavior of $\finalfunct - f_d$ at $M_A\backslash \{d\}$ remains the same as the behavior of $\finalfunct$ at those points. Moreover, since $f_d(A) \in \mathcal{L}(X)$, it follows that 
		\begin{align*}
			\mathcal{D}(\finalfunct(A)) =  \mathcal{D}( \finalfunct_\bullet(A)) \;\mbox{ where }\;  \finalfunct_\bullet(z):=\finalfunct(z)-\sum_{d \notin \finalfunct^{-1}(\infty) \cap M_A} f_d(z).
		\end{align*}
		Thus, we can assume that $\finalfunct$ has regular limits equal to $0$ at $M_A \backslash \finalfunct^{-1}(\infty)$. 
		%For all $d \in g^{-1}(\infty) \cap M_A$,  let $\alpha_d$ be the order of the polynomial limit of $g$ at $d$, and take $n_d \in \NN$ such that $n_d > \alpha_d$. Let $n_d = 0$ for any other $d \in \{-a,a,\infty\}$.
		
		Now, we proceed by showing both inclusions $\subset, \supset$ of the statement, starting with the latter one. For all $t>0$ small enough (for which $b \notin \sigma(tA)$), set
		\begin{align}\label{regularisingFunction}
			h_t(z) := \frac{(a-z)^{n_a} (a+z)^{n_{-a}} b^{n_\infty}}{(t+a-z)^{n_a} (t+a+z)^{n_{-a}} (b+tz)^{n_\infty}}, \quad z \in \mathcal{D}(\finalfunct),
		\end{align}
		with $n_d = 0$ if $d \notin \finalfunct^{-1}(\infty)\cap M_A$ and the rest $n_d\in \NN$ large enough so that $h_t \finalfunct \in \mathcal{E}$. Then $h_t \finalfunct(A) \in \mathcal{L}(X)$ with $\mathcal{D}((h_t \finalfunct)(A)) = X$, and note that $h_t^{-1} \in \mathcal{M}_A$ since $\sigma_p(A) \cap \finalfunct^{-1}(\infty) = \emptyset$ (see Lemma \ref{notInjectivityLemma}). Therefore, $\finalfunct(A) \supset (h_t \finalfunct)(A) h_t^{-1}(A)$, which implies that $\mathcal{D}(\finalfunct(A)) \supset \mathcal{D}(h_t^{-1}(A)) = \mathcal{R}(h_t(A))$ for all $t>0$ small enough. 
		%Notice that, since $\mathcal{R}(b+A)^{-n_a -n_{-a}} = \mathcal{D}(A^{n_a + n_{-a}})$, 
		%\begin{align*}
		%	\mathcal{R}(h(A))  \supset \mathcal{R}((b+A)^{-n_\infty} (a-A)^{n_a} (a+A)^{n_{-a}}).
		%\end{align*}
		%Now, let
		In addition, since both $aI+A$ and $aI-A$ are sectorial operators, we have that
		\begin{align*}
			\lim_{t\to 0} h_t(A) x = x, \quad \text{for all }  x \in \bigcap_{d \in g^{-1}(\infty) \cap M_A} \overline{\mathcal{R}(dI-A)},
		\end{align*}
		see \cite[Proposition 2.1.1 (c)]{haase2006functional}, which yields that 
		$$\bigcap_{d \in \finalfunct^{-1}(\infty) \cap M_A} \overline{\mathcal{R}(dI-A)} \subset \overline{\mathcal{R}(h_t(A))} \subset \overline{\mathcal{D}(\finalfunct(A))},
		$$
		and the inclusion $\supset$ of the assertion follows.
		
		Let us prove the reverse inclusion $\subset$. Assume that $\infty \in \finalfunct^{-1}(\infty) \cap M_A$. Then,  $|\finalfunct(z)| \sim |z|^\alpha$ as $z \to \infty$ for some $\alpha>0$. It follows that $(1+\finalfunct(z))^{-1}$ regularizes $(z+a)^{\alpha'}$ for any $\alpha' \in (0,\alpha)$, which implies that $\mathcal{D}(\finalfunct(A)) \subset \mathcal{D}((A+aI)^{\alpha'})$. Reasoning similarly with $-a,a$, one obtains that for any $\alpha'>0$ small enough,
		\begin{align*}
			\mathcal{D}(\finalfunct(A)) \subset \bigcap_{d \in \finalfunct^{-1}(\infty) \cap M_A} \mathcal{R}((dI-A)^{\alpha'}),
		\end{align*}
		where $\mathcal{R}((\infty I-A)^{\alpha'}) := \mathcal{D}((aI+A)^{\alpha'})$. Then, our proof will finish if we show that $\overline{\mathcal{R}((dI-A)^{\alpha'})} \subset \overline{\mathcal{R}(dI-A)}$. Assume that $d=\infty$. It follows from Theorem \ref{sectorialTheorem} that $(aI+A)^{\alpha'}$ is a  sectorial operator for a small enough $\alpha'>0$. Moreover, $aI+A$ is also a sectorial operator, and $(aI+A)^{\alpha'} = f_{\alpha'}(aI+A)$ (where $f(z)=z^{\alpha'}$) by using the NFC of sectorial operators (see \cite[Section 2.3]{haase2006functional}). Then, by the composition rule for sectorial operators (see e.g. \cite[Theorem 2.4.2]{haase2006functional}), one has that $f_{1/\alpha'}((aI+A)^{\alpha'}) = aI+A$.  Reasoning analogously as in the proof of the inclusion $\supset$, one gets that
		\begin{align*}
			\overline{\mathcal{D}(aI+A)} = \overline{\mathcal{D}((f_{1/\alpha'}) ((aI+A)^{\alpha'}))} \supset \overline{\mathcal{D}((aI+A)^{\alpha'})},
		\end{align*}
		as we wanted to prove. The cases $d \in \{-a,a\}$ are solved in an analogous way, by using the operators $(aI+A)^{-\alpha'}, (aI-A)^{-\alpha'}$,  respectively. The proof is finished.
		%\begin{align*}
		%	B_t (x) :=\frac{(a-A)^{n_a} (a+A)^{n_a}}{(t+a-A)^{n_a} (t+a+A)^{n_{-a}}(b+tA)^{n_\infty} }, \quad x \in X, t>0.
		%\end{align*}
	\end{proof}
	
	\begin{corollary}\label{domainReflexiveCor}
		If $X$ is reflexive, then $\overline{\mathcal{D}(g(A))} = X$.
	\end{corollary}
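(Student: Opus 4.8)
The plan is to derive the statement directly from the description of $\overline{\mathcal{D}(\finalfunct(A))}$ provided by Proposition \ref{domainProp}. If $\finalfunct^{-1}(\infty)\cap M_A=\emptyset$ there is nothing to do, since Proposition \ref{domainProp} already gives $\overline{\mathcal{D}(\finalfunct(A))}=X$ without any assumption on $X$. In the remaining case I would invoke the identity
\[
\overline{\mathcal{D}(\finalfunct(A))}=\bigcap_{d\in \finalfunct^{-1}(\infty)\cap M_A}\overline{\mathcal{R}(dI-A)},\qquad \mathcal{R}(\infty I-A):=\mathcal{D}(A),
\]
and show that, when $X$ is reflexive, each factor of this intersection equals $X$; the intersection of finitely many copies of $X$ is then $X$, which finishes the proof. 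Thus everything reduces to proving that $\overline{\mathcal{R}(dI-A)}=X$ for each $d\in \finalfunct^{-1}(\infty)\cap M_A$, with the convention above at $d=\infty$.

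For a finite point $d\in\{-a,a\}\cap \finalfunct^{-1}(\infty)$ the key observation is that $\finalfunct$ has limit $\infty$ (an infinite, not finite, value) at $d$. By the contrapositive of Lemma \ref{notInjectivityLemma}, this forces $d\notin\sigma_p(A)$, i.e.\ $dI-A$ is injective. Now since $A\in\BSect(\omega,a)$, both $aI-A$ and $aI+A$ are sectorial, and the relevant operator is sectorial up to sign: indeed $aI-A$ is itself sectorial, while $-aI-A=-(aI+A)$ has the same kernel and range as the sectorial operator $aI+A$. Applying the decomposition $X=\mathcal{N}(B)\oplus\overline{\mathcal{R}(B)}$, valid for a sectorial operator $B$ on a reflexive Banach space (see \cite[Proposition 2.1.1(h)]{haase2006functional}), to $B=aI-A$ (for $d=a$) or $B=aI+A$ (for $d=-a$), and using that the kernel is trivial by injectivity, I would conclude $\overline{\mathcal{R}(dI-A)}=X$.

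It remains to treat $d=\infty$ when $\infty\in \finalfunct^{-1}(\infty)\cap M_A$, where the factor is $\overline{\mathcal{R}(\infty I-A)}=\overline{\mathcal{D}(A)}=\overline{\mathcal{D}(aI+A)}$. Here I would use that a sectorial operator on a reflexive Banach space is densely defined, so that $\overline{\mathcal{D}(aI+A)}=X$; this completes the argument. Alternatively, and more directly, one may simply note that $\finalfunct(A)$ is sectorial by Theorem \ref{sectorialTheorem}, and again appeal to the fact that a sectorial operator on a reflexive space has dense domain to obtain $\overline{\mathcal{D}(\finalfunct(A))}=X$ in one stroke. I expect the only real content of the proof to be this reflexivity input—the direct-sum decomposition and the density of the domain for sectorial operators—which is precisely where the hypothesis on $X$ enters; the identification of kernels and ranges after the harmless sign changes, and the reduction itself, are routine.
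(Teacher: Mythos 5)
Your proof is correct and follows essentially the same route as the paper's: both reduce via Proposition \ref{domainProp}, use Lemma \ref{notInjectivityLemma} to rule out $g^{-1}(\infty)\cap\sigma_p(A)$, and invoke the reflexive-space decomposition $X=\overline{\mathcal{D}(A)}=\mathcal{N}(aI\mp A)\oplus\overline{\mathcal{R}(aI\mp A)}$ from \cite[Proposition 2.1.1(h)]{haase2006functional}; you merely spell out the case analysis that the paper leaves implicit. The one-line alternative you mention at the end (density of the domain of the sectorial operator $g(A)$ itself on a reflexive space) is also valid and bypasses Proposition \ref{domainProp} entirely.
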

	
	\begin{proof}
		By \cite[Proposition 2.1.1 (h)]{haase2006functional}, one has that 
		$$X = \overline{\mathcal{D}(A)} = \mathcal{N}(aI-A) \oplus \overline{\mathcal{R}(aI-A)} = \mathcal{N}(aI+A) \oplus \overline{\mathcal{R}(aI+A)}$$
		if $X$ is reflexive. Since $\sigma_p(A) \cap g^{-1}(\infty) = \emptyset$ (see Lemma \ref{notInjectivityLemma}), the statement follows by an application of Proposition \ref{domainProp}.
	\end{proof}
	
	Recall that $\exp_{-w} (z):= \exp(-wz)$ for all $z,\,w\in \CC$. Since $T_\finalfunct(w) = \exp_{-w} (\finalfunct(A))$, it is natural to conjecture that $T_\finalfunct(w) = (\exp_{-w} \circ \finalfunct) (A)$. The theorem below answers this question positively. Its proof is inspired by the composition rule for sectorial operators given in \cite{haase2005general}, but carefully adapted to cover all our cases. Indeed, one could easily generalize the result below to a composition rule from bisectorial to sectorial operators, addressing a larger class of functions. However, this would require to introduce several new definitions and additional cumbersome notations. Thus, in order for the paper to be accessible for a broad class of mathematicians, we will limit to our specific cases. 
	
	\begin{theorem}\label{compositionRule}
		Let $\finalangle, A, g$ be as in Corollary \ref{holomorphicCor}, so that $-g(A)$ generates a holomorphic semigroup $T_g$ of angle $\frac{\pi}{2} - \finalangle$. Then, for any $w \in S_{\pi/2-\finalangle}$,  we have that $(\exp_{-w} \circ g) \in \mathcal{M}_A$ and 
		\begin{align}\label{exponentialFormula}
			T_g(w) = (\exp_{-w} \circ g)(A).
		\end{align}
	\end{theorem}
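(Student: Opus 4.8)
The plan is to prove that both sides of \eqref{exponentialFormula} are bounded operators and then to identify them by a Cauchy double-integral argument, adapting the composition rule of \cite{haase2005general} to the present bisectorial setting. First I would fix $w\in S_{\pi/2-\finalangle}$ and choose angles $\finalangle<\auxangle<\psi<\pi/2-|\arg w|$, which is possible exactly because $\finalangle<\pi/2-|\arg w|$. By hypothesis (a) of Theorem \ref{sectorialTheorem} there is $\varphi\in(0,\omega)$ with $g(BS_{\varphi,a})\subset \overline{S_\auxangle}\cup\{\infty\}$; set $\auxfunct:=g|_{BS_{\varphi,a}}$, so that $\auxfunct(A)=g(A)$. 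The first task is to see that $\exp_{-w}\circ g$ defines a bounded operator. Since $g$ has neither zeros nor poles in $BS_{\varphi,a}$ (Remark \ref{openMappingRemark}), the composition is holomorphic, and it is bounded because $|\exp_{-w}(\zeta)|=e^{-\Re(w\zeta)}\le 1$ on $\overline{S_\auxangle}$ for our choice of $w$. Regularity at each $d\in M_A$ then follows from that of $g$: where $g(d)=c_d\in\CC$, the Lipschitz bound $|\exp(-wg(z))-\exp(-wc_d)|\lesssim|g(z)-c_d|$ transfers regularity from $g$; where $g(d)=\infty$, one has $\Re(wg(z))\ge\kappa|w|\,|g(z)|$ for some $\kappa>0$ (as $|\arg w|+\auxangle<\pi/2$), so $\exp_{-w}\circ g$ tends to $0$ with exponential speed and is trivially regular. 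Hence Lemma \ref{differentNFCLemma} gives $\exp_{-w}\circ g\in\mathcal{M}_A$, and, having no poles, $(\exp_{-w}\circ g)(A)\in\mathcal{L}(X)$.

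Next I would neutralise the non-decay of $\exp_{-w}$ at the origin by splitting
\begin{align*}
\exp_{-w}=\widetilde f+\tfrac{1}{1+\,\cdot\,},\qquad \widetilde f:=\exp_{-w}-\tfrac{1}{1+\,\cdot\,}\in\mathcal{E}_0(S_\psi),
\end{align*}
where $\widetilde f$ lies in the primary algebra of the sectorial operator $g(A)$ because it vanishes to first order at $0$ and decays at $\infty$. The sectorial functional calculus of $g(A)$ then gives $T_g(w)=\widetilde f(g(A))+(1+g(A))^{-1}$. The resolvent summand is immediate: since $-1\in\rho(g(A))$, Lemma \ref{functionalCalculusProperties}(f) yields $(1+g(A))^{-1}=\big(\tfrac{1}{1+g}\big)(A)=\big(\tfrac{1}{1+\,\cdot\,}\circ g\big)(A)$. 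Thus it remains to prove the composition identity for the genuinely decaying function, namely $\widetilde f(g(A))=(\widetilde f\circ g)(A)$; adding the two pieces and using additivity of the NFC (Lemma \ref{functionalCalculusProperties}(c), with equality since everything is bounded) then delivers \eqref{exponentialFormula}.

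For $\widetilde f(g(A))$ I would use the Cauchy representation over the two-ray contour $\Gamma'=\partial S_\psi$ (no arc is needed, as $\widetilde f$ decays at $0$ and $\Gamma'\subset\rho(g(A))$ because $\psi>\finalangle$):
\begin{align*}
\widetilde f(g(A))=\frac{1}{2\pi i}\int_{\Gamma'}\widetilde f(\zeta)\,R(\zeta,g(A))\,d\zeta .
\end{align*}
I then substitute $R(\zeta,g(A))=\tfrac{1}{\zeta}R_{\auxfunct}^\zeta(A)$ and invoke the decomposition of Remark \ref{integrationRemark}: the difference $R_{\auxfunct}^\zeta-\sum_{d\in M_A}f_d^\zeta\in\mathcal{E}_0$ produces a genuine contour integral over a path $\Gamma$ for $A$, while each $f_d^\zeta(A)$ is an explicit bounded operator built from resolvents of $A$ and $A^2$ as in Proposition \ref{existenceProposition}. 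Interchanging the $\zeta$- and $z$-integrations by Fubini — justified by the decay of $\widetilde f(\zeta)/\zeta$ on $\Gamma'$ together with the resolvent bound $\|R(z,A)\|_{\mathcal L(X)}\lesssim\min\{|z-a|,|z+a|\}^{-1}$ and the integrability estimates of Lemmas \ref{IneqLemma} and \ref{integralBoundLemma} near $M_A$ — reduces the inner integral to a residue computation. Since $g(z)\in\overline{S_\auxangle}\subset S_\psi$ lies inside $\Gamma'$, Cauchy's theorem gives $\tfrac{1}{2\pi i}\int_{\Gamma'}\tfrac{\widetilde f(\zeta)}{\zeta-g(z)}\,d\zeta=\widetilde f(g(z))$, and carrying the correction terms $f_d^\zeta$ through the same integration reproduces exactly the limit-and-resolvent pieces that the NFC of $A$ attaches to $\widetilde f\circ g$ at the singular points, so the expression collapses to $(\widetilde f\circ g)(A)$.

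I expect the main obstacle to be precisely this last bookkeeping at the finite, nonzero-limit points of $M_A$: there neither $(\zeta-g(z))^{-1}$ nor $\widetilde f(g(z))$ decays, the naive contour integrals diverge, and one must track the regularizing terms $f_d^\zeta$ through both the Fubini interchange and the Cauchy evaluation, verifying that they recombine into the regularized value the NFC assigns to $(\widetilde f\circ g)(A)$. The splitting above is what makes this tractable, since $\widetilde f(0)=\widetilde f(\infty)=0$ removes any regularization at the points where $g$ tends to $0$ or $\infty$, leaving only the simplest bounding families (those with finite nonzero $c_d$) to be accounted for.
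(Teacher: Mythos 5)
Your overall architecture --- establish $\exp_{-w}\circ\finalfunct\in\mathcal{M}_A$ via regularity and Lemma \ref{differentNFCLemma}, split off the resolvent term so that $f_w:=\exp_{-w}-(1+z)^{-1}\in\mathcal{E}_0[S_{\finalangle}]$ and reduce to $f_w(\finalfunct(A))=(f_w\circ\finalfunct)(A)$, represent $f_w(\finalfunct(A))$ by a Cauchy integral over a contour for the sectorial operator $\finalfunct(A)$, rewrite the resolvent as a function of $A$, regularize, apply Fubini and then Cauchy --- is exactly the paper's. The genuine gap is in the step you yourself flag as ``the main obstacle'': the recombination of the correction terms. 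You propose to regularize $R_\auxfunct^\zeta$ by the families $(f_d^\zeta)_{d\in M_A}$ of Proposition \ref{existenceProposition} via Remark \ref{integrationRemark}. Those families were designed only to make resolvent norms uniformly bounded; at points $d$ with $c_d\in\{0,\infty\}$ they contain the factor $|\zeta|^{\pm 1/\alpha}$, so they are not holomorphic in $\zeta$ and the inner integral $\frac{1}{2\pi i}\int_{\Gamma'}\zeta^{-1}f_w(\zeta)f_d^\zeta(z)\,d\zeta$ cannot be evaluated by residues; moreover they do not belong to $\mathcal{E}_0$ (e.g. $f_{a,0}^\zeta(a)=1$), so $f_d^\zeta(A)$ is not given by the $\Gamma$-contour integral and the $z$-side and $\zeta$-side correction terms do not visibly cancel. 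Your closing claim that the splitting ``leaves only the simplest bounding families (those with finite nonzero $c_d$)'' is also not right: $R_\auxfunct^\zeta(z)=\zeta/(\zeta-\finalfunct(z))\to 1$ as $\finalfunct(z)\to 0$, so at points with $c_d=0$ the decomposition of Remark \ref{integrationRemark} still requires the non-holomorphic corrections, and without them the only available bound on the uncorrected double integrand has a non-integrable $|z-d|^{-1}$ singularity on $\Gamma$ for each fixed $\zeta$, so Fubini cannot be invoked as stated.

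The paper resolves precisely this point by a different regularization. It multiplies by a single $\zeta$-independent regularizer $e\in\mathcal{E}$, which can be chosen to vanish at every $d'\in M_A\setminus\sigma_p(A)$ (adding powers of $(z-d')/(z-b)^2$), and subtracts explicit corrections $\frac{1}{\zeta-c_d}\frac{z+d}{b-z}\frac{b-d}{2d}$ only at $d\in\sigma_p(A)\cap\{-a,a\}$, where $e(d)\neq 0$ is forced by injectivity of $e(A)$ and where $c_d$ is finite by Lemma \ref{notInjectivityLemma}. These corrections \emph{are} holomorphic in $\zeta$, so Cauchy's theorem evaluates both the scalar inner integral and its operator counterpart to $f_w(c_d)$ times the same rational expression, and the two sides recombine exactly. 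Even then, verifying Fubini at a point $d\in\sigma_p(A)$ with $c_d=0$ requires the additional $\delta$-power splitting
\begin{align*}
\frac{f_w(\lambda)}{\lambda}\,\frac{\finalfunct(z)}{\lambda-\finalfunct(z)}\,\frac{e(z)}{z-d}
=\frac{f_w(\lambda)}{\lambda^{1+\delta}}\cdot\frac{\lambda^{\delta}\finalfunct(z)^{1-\delta}}{\lambda-\finalfunct(z)}\cdot\frac{(e\finalfunct^{\delta})(z)}{z-d},
\end{align*}
which exploits the exactly-polynomial behavior of $\finalfunct$ at $d$; nothing in your proposal plays this role. To repair your argument you would either have to adopt the paper's regularizer-plus-holomorphic-corrections scheme, or replace the families of Proposition \ref{existenceProposition} by $\zeta$-holomorphic ones and re-establish their uniform integrability.
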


	\begin{proof}
		First of all, the claim is trivial if $\finalfunct=0$, so we will assume that $\finalfunct\neq 0$. Fix  $w \in S_{\pi/2-\finalangle}$. Then, it is straightforward to check that $(\exp_{-w} \circ \finalfunct)$ is regular at $M_A$, so Lemma \ref{differentNFCLemma} yields that $(\exp_{-w} \circ \finalfunct) \in \mathcal{M}(A)$. 
		
		Now set $f_w (z) := \exp_{-w} (z) - (1+z)^{-1}$. Then,  $f_w \in \mathcal{E}_0[S_{\finalangle}]$. As $-1 \notin \sigma(\finalfunct(A))\subset \overline{S_\finalangle}$, an application of Lemma \ref{functionalCalculusProperties} (f) yields that $(f_w \circ \finalfunct)  \in \mathcal{M}_A$ and $(I+\finalfunct)^{-1}(A) = (I+\finalfunct(A))^{-1}$. Therefore, our statement will follow if we prove that $(f_w \circ \finalfunct)(A) = f_w(\finalfunct(A))$. 
		
		Recall that, for $d \in \CC$, we denote by $c_d$ the limit of $\finalfunct(z)$ as $z \to d$ whenever it exists. In particular, it exists if $d \in \sigma_p(A)$, see Lemma \ref{notInjectivityLemma}. Let $b>a$, and for any $\lambda \notin \overline{S_\finalangle}$, set
		\begin{align*}
			G_\lambda (z):= \frac{1}{\lambda - \finalfunct(z)} - \sum_{d \in \sigma_p(A) \cap \{-a,a\}} \frac{1}{\lambda-c_d} \frac{z+d}{b-z}\frac{b-d}{2d}, \quad z \in \mathcal{D}(\finalfunct).
		\end{align*}
		Since $\lambda \in \rho(A)$, one has that $G_\lambda \in \mathcal{M}_A$. Moreover, $G_\lambda \in H^\infty (\mathcal{D}(\finalfunct))$ with $G_\lambda (d) = 0$ for all $d \in \sigma_p(A) \cap \{-a,a\}$. Furthermore,  it is readily seen that one can find a regularizer $e \in \mathcal{E}$ independent of $\lambda$, for which $eG_\lambda \in \mathcal{E}_0$. Indeed, to check the regularity of $eG_\lambda$ at the points $d' \in M_A$, one can add to $e$ powers of the function $(z-d')/(z-b)^2$ if $d'\notin \sigma_p(A)$. Otherwise, the regularity is obtained by the bounds in Lemma \ref{IneqLemma} (recall that in this case, $c_{d'} \neq \infty$ by Lemma \ref{notInjectivityLemma}).
		
		Then, let $\Gamma'$ be an appropriate path for the NFC of the sectorial operator $\finalfunct(A)$ and the function $f_w$. % Let 
		%$$\widetilde g_\lambda(z)=\frac{e(z)}{\lambda-g(z)},\quad z\in\Gamma', \, \lambda \notin \overline{S_\beta}.$$
		It follows that
		%\begingroup
		%\allowdisplaybreaks
		\begin{align*}
			f_w(\finalfunct(A)) &= e(A)^{-1} e(A) f_w (\finalfunct(A))% = e(A)^{-1} \frac{1}{2\pi i}  \int_{\Gamma'} f_w(\lambda) e(A) R(\lambda, g(A))\, d\lambda
			= e(A)^{-1}  \frac{1}{2\pi i}  \int_{\Gamma'} f_w(\lambda) e(A) \finalfunct (A) \, d\lambda
			\\ &= e(A)^{-1}  \frac{1}{2\pi i}  \int_{\Gamma'} f_w(\lambda) (e(z)G_\lambda (z))(A) \, d\lambda
			\\ & \quad +  \sum_{d \in \sigma_p(A) \cap \{-a,a\}} \frac{b-d}{2d}  (dI+A)R(b,A)\; \frac{1}{2\pi i}  \int_{\Gamma'}  \frac{f_w(\lambda)}{\lambda-c_d}  \, d\lambda.
		\end{align*}
		By Cauchy's integral theorem, one has that the last term is precisely 
		\begin{align*}
			\sum_{d \in \sigma_p(A) \cap \{-a,a\}} \frac{b-d}{2d} f(c_d) (dI+A)R(b,A) .
		\end{align*}
		Now, let us compute the first term. Let $\Gamma$ be an appropriate path of the NFC of the bisectorial operator $A$. Since $eG_\lambda \in \mathcal{E}_0$, one has that
		\begin{align}\label{iden}
			e(A)^{-1} \frac{1}{2\pi i}  \int_{\Gamma'} f_w(\lambda) (e(z)G_\lambda (z))(A) \, d\lambda
			&= e(A)^{-1} \frac{1}{(2\pi i)^2}  \int_{\Gamma'} f_w(\lambda) \int_\Gamma e(z) G_\lambda (z) R(z,A)\, dz d\lambda\notag
			\\ &= e(A)^{-1} \frac{1}{(2\pi i)^2}  \int_\Gamma e(z) R(z,A)  \int_{\Gamma'} f_w(\lambda) G_\lambda (z) \, d\lambda dz.
		\end{align}
		Let us go on with the proof before checking the hypothesis for Fubini's theorem that we have applied in the last equality in \eqref{iden}.  By Cauchy's theorem, it easily follows that
		\begin{align*}
			\frac{1}{2\pi i}\int_{\Gamma'} f_w(\lambda) G_\lambda (z) \, d\lambda = f_w(\finalfunct(z)) - \sum_{d \in \sigma_p(A) \cap \{-a,a\}} f(c_d) \frac{z+d}{b-z}\frac{b-d}{2d}.
		\end{align*}
		%\endgroup
		From this,  we can conclude that in fact 
		\begin{align*}
			&e(A)^{-1} \frac{1}{(2\pi i)^2}  \int_{\Gamma} e(z) R(z,A)  \int_\Gamma f_w(\lambda) G_\lambda (z) \, d\lambda dz
			\\ & \quad = (f_w \circ \finalfunct)(A) - \sum_{d \in \sigma_p(A) \cap \{-a,a\}} \frac{b-d}{2d} f(c_d) (dI+A)R(b,A),
		\end{align*}
		and our assertion follows.
		
		Let us check now that indeed Fubini's theorem can be applied. For that, we have to check the integrability of the function
		\begin{align*}%\label{Flambda}
			F(\lambda, z) := \frac{f_w (\lambda)}{\lambda} \lambda G_\lambda (z) \frac{e(z)}{\min\{|z-a|,|z+a|\}},
		\end{align*}
		on $\Gamma' \times \Gamma$. First, $f_w(\lambda)/\lambda$ is clearly integrable on $\Gamma'$ and, by Lemma \ref{IneqLemma}, $\lambda G_\lambda (z)$ is uniformly bounded on $\Gamma' \times \Gamma$. Now, one can assume that $\frac{e(z)}{\min\{|z-a|,|z+a|\}}$ is integrable on $\Gamma$ if and only if $\{-a,a\} \cap \sigma_p(A) = \emptyset$. Otherwise, one has to check a uniform bound for the integral of $F(\lambda,z)$ on the intersection of $\Gamma$ with a neighborhood of $d \in \sigma_p(A)\cap\{-a,a\}$.
		
		So let $d \in \sigma_p(A) \cap \{-a,a\}$. Recall that in this case, $c_d \in \overline{S_\finalangle}$ with $c_d \neq \infty$. If $c_d \neq 0$, then $\lambda G_\lambda$ is of the same type as the function appearing in Step 1 in the proof of Proposition \ref{existenceProposition}, and proceeding as there, one can easily check the integrability condition.
		
		Thus,  we can assume that $c_d = 0$. So,  one has that
		\begin{align*}
			|\lambda G_\lambda (z)| \lesssim \frac{|\finalfunct(z)|}{|\lambda - \finalfunct(z)|} + |z-d|, \qquad \mbox{ as }\; z \to d,
		\end{align*}
		where the $|z-d|$ term is the result of applying a Taylor expansion of order $1$ in a similar way as in Step 2 in the proof of Proposition \ref{existenceProposition}. It is readily seen that the $|z-d|$ term does not entangle the bound of $F(\lambda,z)$. Moreover, for any $\delta \in (0,1)$, one has that
		\begin{align*}
			&\left|\frac{f_w (\lambda)}{\lambda} \frac{\finalfunct(z)}{\lambda - \finalfunct(z)} \frac{e(z)}{z-d}\right| = \left|\frac{f_w (\lambda)}{\lambda^{1+\delta}}\right| \left|\frac{\lambda^\delta \finalfunct(z)^{1-\delta}}{\lambda - \finalfunct(z)}\right| \left|\frac{(e\finalfunct^\delta)(z)}{z-d}\right|.
		\end{align*}
		It is easy to see that $f_w(\lambda)/\lambda^{1+\delta}$ is still integrable on $\Gamma'$, and that the middle term is uniformly bounded. Moreover, since $c_d = 0$, we have by hypothesis that $|\finalfunct(z)| \sim |z-d|^\alpha$ as $z\to d$ for some $\alpha > 0$. Thus $\finalfunct^\delta (z) \lesssim |z-d|^{\alpha \delta}$, so the last term is integrable in $\Gamma$.  The proof is finished.
	\end{proof}

	\section{Generalized Black-Scholes equations on interpolation spaces}\label{BlackScholesSection}
	
	In this section, we apply the theory developed in the previous sections to introduce and study generalized Black--Scholes equations on $(L^1-L^\infty)$-interpolation spaces.
	%This class includes many of the classical function spaces (e.g.  $L^p$-spaces,  Orlicz spaces, Lorenz spaces, Marcinkiewiecz spaces). Also, $E$ is said to have an {\bf order continuous norm} if $\|f_n\| \to 0$ for every sequence of functions $f_n \in E$ converging to $0$ almost everywhere and for which $|f_n|$ is non-increasing. For more details we refer to \cite{arendt2002spectrum} and the references therein.
	Throughout the following, without any mention,  $E$ will denote a $(L^1,L^\infty)$-interpolation space on $(0,\infty)$. 
	%Recall also that every interpolation space can be renormed to be exact.
	We recall that 
	$$(G_E(t) f)(x) := f(e^{-t}x), \qquad x> 0, \, t \in \RR,\, f \in E,$$
	defines a group of bounded operators $G_E=(G_E(t))_{t\in\RR}$ on $E$ with $\|G_E(t)\|_{\mathcal L(E)} \leq \max\{1,e^t\}$ for $t \in \RR$, and which is strongly continuous if and only $E$ has order continuous norm. Then, the lower and upper Boyd indices $\underline{\eta}_E, \overline{\eta}_E$ are defined by
	\begin{align*}
		\underline{\eta}_E := - \lim_{t\to\infty} \frac{\log \|G_E(-t)\|_{\mathcal L(E)}}{t},
		\quad \overline{\eta}_E := \lim_{t\to\infty} \frac{\log \|G_E(t)\|_{\mathcal L(E)}}{t},
	\end{align*}
	and they satisfy $0 \leq \underline{\eta}_E \leq \overline{\eta}_E \leq 1$. 
	
	The generator $J_E$ of the group $G_E$ is given by
	\begin{equation}\label{gene}
	\begin{cases}
	\mathcal{D}(J_E) =\Big \{f \in E \, : f \in AC_{\rm loc}(0,\infty) \text{ and } -xf'(x) \in E\Big\},\\
		(J_E f)(x) := -xf'(x), \quad x>0, \, f \in \mathcal{D}(J_E), 
		\end{cases}
	\end{equation}
	and its spectrum is given by 
	$$\sigma(J_E) = \{\lambda \in \CC \, : \, \underline{\eta}_E \leq \Re \lambda \leq \overline{\eta}_E\},$$
	see \cite{arendt2002spectrum} for more details about the operator $J_E$ and the classical Black--Scholes equation on exact $(L^1-L^\infty)$-interpolation spaces. 
	%Indeed, in this section we will make use of several results appearing in \cite{arendt2002spectrum} which are stated in the setting of exact $(L^1-L^\infty)$-interpolation spaces. 
	However, as the authors indicate in \cite{arendt2002spectrum},  every $(L^1-L^\infty)$-interpolation space can be equivalently renormed so that it becomes exact (e.g. \cite[Proposition III.1.13]{bennett1988interpolation}). Thus, we may apply the results in \cite{arendt2002spectrum} to arbitrary $(L^1-L^\infty)$-interpolation spaces, without requiring them to be exact.
	
	As a consequence of the above properties, for any $\underline{\varepsilon}, \overline{\varepsilon} >0$, both $\left(\underline{\eta}_E + \underline{\varepsilon}\right)I + J_E$ and $\left(\overline{\eta}_E +\overline{\varepsilon}\right)I - J_E$ are sectorial operators of angle $\frac{\pi}{2}$ (see e.g.  \cite[Section 2.1.1]{haase2006functional}). Therefore, $J_E - \frac{\overline{\eta}_E + \underline{\eta}_E + \overline{\varepsilon} - \underline{\varepsilon}}{2}I$ is a bisectorial operator of angle $\pi/2$ and half-width $\frac{\overline{\eta}_E - \underline{\eta}_E + \overline{\varepsilon} + \underline{\varepsilon}}{2}$. However, to avoid cumbersome notations we will write $f(J_E)$ to refer to $f_k(J_E-k)$ for $k = \frac{\overline{\eta}_E + \underline{\eta}_E + \overline{\varepsilon} - \underline{\varepsilon}}{2}$ and $f_k(z)=f(z+k)$. Notice that one may take $\underline{\varepsilon}=\overline{\varepsilon} = 0$ if $\underline{\eta}_E =0$ and $\overline{\eta}_E = 1$,  respectively, or if $E = L^p$ with $1\leq p \leq \infty$.
	
	%Moreover, the resolvent identity yields
	%\begin{align*}
	%	R(\lambda, J_E) f (x) &= -x^{-\lambda} \int_x^\infty u^{\lambda - 1} f(u) \, du, \quad &x>0, f \in E, \Re \lambda < \underline{\eta}_E,
	%	\\ R(\lambda, J_E) f (x) &= x^{-\lambda} \int_0^x u^{\lambda - 1} f(u) \, du, \quad &x>0, f \in E, \Re \lambda > \overline{\eta}_E,
	%\end{align*}
	%see for example \cite[Section A.8]{haase2006functional}.

	%\textcolor{red}{Actually, above expressions are given in \cite{arendt2002spectrum} for $\Re \lambda < 0$ and $\Re \lambda >1$ respectively. However, since the integrals are well defined for any $\Re \lambda < \underline{\eta}_E$ and $\Re \lambda > \overline{\eta}_E$, and they are holomorphic, they must be the resolvent. If $T_E$ is strongly continuous this is trivial by the resolvent formula, but if not??}
	
	In \cite{arendt2002spectrum},  the authors make use of the operator $J_E$ to study the classical Black-Scholes partial differential equation in $(L^1,L^\infty)$-interpolation spaces. Recall that the classical Black-Scholes equation is the degenerate parabolic equation given by
	\begin{align}\label{BSEquation}
		u_t = x^2 u_{xx} + x u_x, \quad x,t > 0.
	\end{align}
	In fact,  we can rewrite \eqref{BSEquation} as $u_t = J_E^2 u$, where $J_E$ is the operator defined in \eqref{gene}. 
	
	%The aim of this section is to study a generalized version of the Black-Scholes equation \eqref{BSEquation} in $(L^1,L^\infty)$-interpolation spaces by means of the theory we developed in the preceding sections. 
	% In following subsections, we suggest different fractional versions of the Black--Scholes equation.
	Next, we introduce the fractional operators that generalize the Black--Scholes equation \eqref{BSEquation}.   On the one hand, we will consider fractional powers of the operator $J_E$. If $\alpha \in (0,n)$,  $n\in\mathbb N$, it follows that $\mathcal{D}(J_E^n) \subset \mathcal{D}(J_E^\alpha)$ (see \cite[Proposition 3.1.1 ]{haase2006functional}). If in addition $0<\alpha < 1$, an application of Fubini's theorem to the Balakrishnan representation of $J_E^{\alpha} f$ together with the resolvent identity yields that, whenever $\underline{\eta}_E >0$,
	\begin{align*}
		(J_E^{\alpha} f)(x) = \frac{-1}{\Gamma(1-\alpha)} \int_x^\infty \left(\log \frac{s}{x}\right)^{-\alpha} f'(s) \, ds, \quad f \in \mathcal{D}(J_E), \, x>0.
	\end{align*} 
	If $\underline{\eta}_E = 0$, then one cannot apply Fubini's theorem to obtain the above expression.  However, one can use the fact that $(J_E+\varepsilon I)^{\alpha} f \to J_E^{\alpha} f$ in $E$ as $\varepsilon \downarrow 0$ (see \cite[Proposition 3.1.9]{haase2006functional}), together with
	\begin{align}\label{balakrishnanRepr}
		((J_E+\varepsilon I)^{\alpha} f)(x) = \frac{-1}{\Gamma(1-\alpha)} \int_x^\infty \left(\log \frac{s}{x}\right)^{-\alpha} \left(\frac{x}{s}\right)^\varepsilon f'(s) \, ds,
	\end{align} 
	for any $f \in \mathcal{D}(J_E)$ and $x,\varepsilon > 0$.

	%\subsection{Fractional integral and derivatives}
	%Here we recall the definition of Riemann-Liouville and Weyl fractional derivatives and its relation to the operator $J_E$. We refer the reader to \cite[Chapter 2]{samko1993fractional} for a more detailed lecture on the topic of fractional integration-differentiation.
	
	%Let $\mathcal{S}$ be the Schwartz space on $(0,\infty)$, i.e. the space of functions $f$ in $C^{(\infty}(0,\infty)$ for which $|f^{(n)}(x)| \lesssim x^{-m}$ for all $n,m \in \NN$.
	Next, let $\alpha > 0$ be a real number and recall that we denote by $D^{-\alpha}$  the Riemann-Liouville fractional integral of order $\alpha$, and by $W^{-\alpha}$ the Weyl fractional integral of order $\alpha$, see \eqref{RLFI} and \eqref{WFI},  respectively.
	%\begin{align*}
	%	I^{-\alpha}f (x) &:= \frac{1}{\Gamma(\alpha)} \int_0^x (x-y)^{\alpha-1} f(y) \, dy, \quad x > 0,
	%	\\ W^{-\alpha}f (x) &:= \frac{1}{\Gamma(\alpha)} \int_x^\infty (y-x)^{\alpha-1} f(y) \, dy, \quad x > 0.
	%\end{align*}
	Similarly, $D^{\alpha}$ denotes the Riemann-Liouville fractional derivative of order $\alpha$, and $W^\alpha$ the Weyl fractional derivative of order $\alpha$, defined in \eqref{RLFD} and \eqref{WFD}, respectively.
	%\begin{align*}
	%	I^\alpha f(x) &:= \frac{d^n}{dx^n} D^{-(n-a)}f(x), \quad &x > 0,
	%	\\W^\alpha f(x) &:= (-1)^n\frac{d^n}{dx^n} W^{-(n-a)}f(x), \quad &x > 0,
	%\end{align*}
	%where $n$ is any natural number for which $n>\alpha$. 
	Also, if $m^s$ is the multiplication operator by $x^s$ for any $s \in \RR$, we have that the generalized Cesàro operator $\mathcal{C}_\alpha$ of order $\alpha$, and its adjoint $\mathcal{C}_\alpha^\ast$, are given, respectively,  by
	\begin{align*}
		\mathcal{C}_\alpha = \Gamma(\alpha +1) m^{-\alpha} I^{-\alpha}\;\mbox{ and } \quad  &\mathcal{D}^\alpha = (\mathcal{D}^{-\alpha})^{-1},
		\mathcal{C}_\alpha^\ast = \Gamma(\alpha +1)  W^{-\alpha} m^{-\alpha}, %, \quad &\mathcal{W}^\alpha = (\mathcal{W}^{-\alpha})^{-1}.
	\end{align*}
	see \eqref{FC1} and \eqref{FC1}. Note that these operators are injective due to the fact the operators $\riemannDerivative^{-\alpha}, W^{-\alpha}$ and $m^{-\alpha}$ are injective. Moreover, by equality \eqref{FC1} again, one has that $\mathcal{C}_\alpha$ defines a bounded operator $\mathcal{C}_{\alpha,E}$ when restricted to our $(L^1-L^\infty)$-interpolation space $E$ with $\overline{\eta}_E<1$, since
	\begin{align*}
		\mathcal{C}_{\alpha,E} = \alpha \int_0^\infty e^{-s}(1-e^{-s})^{\alpha-1}  G_E (s)\, ds = \alpha \mathbb B(I-J_E, \alpha),
	\end{align*}
	where we have applied Proposition \ref{HilleStrip}, and $\mathbb B$ denotes the usual Beta function. 
	
	Similarly, $\mathcal{C}_\alpha^\ast$ defines a bounded operator $\mathcal{C}_{\alpha,E}^\ast$ on any $(L^1-L^\infty)$-interpolation space $E$ with $\underline{\eta}_E>0$, satisfying $\mathcal{C}_{\alpha,E}^\ast = \alpha \mathbb B(J_E, \alpha)$. As a consequence, one obtains that 
	$$\mathcal{\riemannDerivative}_E^\alpha := (\mathcal{C}_{\alpha,E})^{-1} = \left(\alpha \mathbb B(I-J_E, \alpha)\right)^{-1}\mbox{ and } \mathcal{W}_E^\alpha := (\mathcal{C}_{\alpha,E}^\ast)^{-1} = \left(\alpha \mathbb B(J_E, \alpha)\right)^{-1},$$
	are closed operators on $E$ whenever $\overline{\eta}_E < 1$ and $\underline{\eta}_E>0$,  respectively. The above identities  appear in \cite{lizama2014boundedness} for a family of Sobolev spaces on $(0,\infty)$.

	%\textcolor{red}{Se debería citar a \cite{lizama2014boundedness} en la introducción ya que ahi se introducen los $\mathcal{D}^{-\alpha}, \mathcal{W}^{-\alpha}$ así como la subordinacion en terminos de $T_E$ para $E=L^p$ y para otros espacios fraccionarios. No lo hago aquí porque creo que es mejor si se evita explicar todos los operadores Cesaro y Cesaro adjunto generalizados. Quiza se pueden mencionar estos operadores en la intro, pero aqui quedaba todo demasiado largo y lioso. Y no son necesarios matemáticamente hablando.}

	\subsection{Generation results  of holomorphic semigroups of fractional powers operators}
	The identity $J_E = \mathcal{W}_E^1 = I - \mathcal{\riemannDerivative}_E^1$ holds whenever the operators are well defined on $E$ (see e.g.  \cite{arendt2002spectrum}). In particular, we have that 
	\begin{align}\label{opera-1}
		(J_E)^{2} = (I - \mathcal{\riemannDerivative}_E^1)^2 = (\mathcal{W}_E^1)^2 =  \mathcal{W}_E^1 (I - \mathcal{\riemannDerivative}_E^1).
	\end{align}
	This motivates us to study different fractional versions of the Black--Scholes equation \eqref{BSEquation}.  In this section, we make use of the theory we developed in the preceding sections to obtain that the operators 
	\begin{align}\label{opera}
		(J_E)^{2\alpha},\;\;(I - \mathcal{\riemannDerivative}_E^\alpha)^2,\;\;  (\mathcal{W}_E^\alpha)^2,\;\;  \mathcal{W}_E^\alpha (I - \mathcal{\riemannDerivative}_E^\alpha),
	\end{align}
	are indeed generators of exponentially bounded holomorphic semigroups  on $E$ for suitable values of $\alpha$.
	
	%\textcolor{red}{Maxima generalidad
		%\begin{proposition}
		%	Let $E$ be an exact $(L^1-L^\infty)$-interpolation space, and let $\alpha \in \bigcup_{n = 0}^\infty \left(\frac{4n+1}{2}, \frac{4n+3}{2}\right)$. Then, for any $\varepsilon > 0$, $J_E ^{2\alpha}$ generates an exponentially bounded holomorphic semigroup of angle $|\arg e^{i \pi \alpha}| - \frac{\pi}{2} - \varepsilon$
		%\end{proposition}
		%\begin{proof}
		%	Immediate application of Corollary \ref{StripCor}.
		%\end{proof}
		%}
	We start with the operator $(J_E)^{2\alpha} $. 
	
	\begin{proposition}\label{fractionalPowerProposition}
		Let $E$ be a $(L^1-L^\infty)$-interpolation space, $n \in \NN$ and $\alpha\in \left(n-\frac{1}{2}, n+\frac{1}{2}\right)$. Then, the operator $(-1)^{n+1} (J_E) ^{2\alpha}$ generates an exponentially bounded holomorphic semigroup $T_{(-1)^{n+1} (J_E)^{2\alpha}}$ of angle $\pi\left(\frac{1}{2}  - \left|\alpha - n\right|\right)$, which is given by
		\begin{align*}\label{fractionalPowerBSFormula}
			\left(T_{(-1)^{n+1} (J_E)^{2\alpha}} (w)f\right)(x) = \frac{1}{2\pi} \int_0^\infty \frac{f(s)}{s} \int_{-\infty}^\infty \left(\frac{s}{x}\right)^{iu} \exp( (-1)^{n+1} w u^{2\alpha}) \, du ds, \quad x > 0, 
		\end{align*}
		for any $w \in S_{\pi\left(\frac{1}{2}  - |\alpha - n|\right)}$ and $f \in E$. In addition, $\overline{\mathcal{D} ((J_E)^{2\alpha})} = \overline{\mathcal{D} (J_E)}$.
	\end{proposition}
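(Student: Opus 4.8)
The plan is to realize $(J_E)^{2\alpha}$ through the bisectorial functional calculus and to reduce the generation statement to Corollary \ref{StripCor}. First I would fix the free spectral parameters so that the sectorial shift of the underlying bisectorial operator is exactly $J_E$: choosing $\underline{\varepsilon} = \underline{\eta}_E$ (and any admissible $\overline{\varepsilon}$) and setting $a := \tfrac{\overline{\eta}_E + \overline{\varepsilon}}{2}$, the operator $A := J_E - aI$ belongs to $\BSect(\pi/2, a)$ and satisfies $A + aI = J_E$, so that $(J_E)^{2\alpha} = (A + aI)^{2\alpha}$ under the convention $f(J_E) = f_a(A)$. Since $2\alpha \in (2n-1, 2n+1)$ is not an odd integer, Corollary \ref{StripCor} with exponent $2\alpha$ yields a $\rho \geq 0$ for which $\rho I + (-1)^n (J_E)^{2\alpha}$ is sectorial of angle $\pi\left|\tfrac{2\alpha}{2} - n\right| = \pi|\alpha - n|$, which is strictly less than $\pi/2$ because $|\alpha - n| < \tfrac12$. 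By Corollary \ref{holomorphicCor} together with the shift characterisation of exponentially bounded holomorphic semigroups, $-(\rho I + (-1)^n (J_E)^{2\alpha}) = -\rho I + (-1)^{n+1}(J_E)^{2\alpha}$ generates a bounded holomorphic semigroup of angle $\tfrac{\pi}{2} - \pi|\alpha - n| = \pi\left(\tfrac12 - |\alpha - n|\right)$; re-adding the real shift $\rho$ shows that $(-1)^{n+1}(J_E)^{2\alpha}$ generates an exponentially bounded holomorphic semigroup $T$ of the same angle.

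For the closure of the domain I would appeal directly to Proposition \ref{domainProp} applied to $g(z) = (z + a)^{2\alpha}$. Because $g$ tends to $\infty$ only as $z \to \infty$ (indeed $|g(z)| \sim |z|^{2\alpha}$ there) while $g(\pm a)$ are finite, one has $g^{-1}(\infty) \cap M_A = \{\infty\}$, and since $\mathcal{R}(\infty I - A) := \mathcal{D}(A) = \mathcal{D}(J_E)$, Proposition \ref{domainProp} gives $\overline{\mathcal{D}((J_E)^{2\alpha})} = \overline{\mathcal{D}(J_E)}$ at once.

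The explicit integral representation I would obtain from the composition rule. By Theorem \ref{compositionRule}, applied to the generator of the bounded semigroup produced above and then multiplied by the scalar factor $e^{\rho w}$ that undoes the shift $\rho$, for every $w$ in the sector one has $T(w) = h_w(J_E)$ with $h_w(z) = \exp\left((-1)^{n+1} w z^{2\alpha}\right)$ in the NFC of $A$. It then remains to turn $h_w(J_E)$ into a kernel operator. Here I would use that $J_E$ generates the dilation group $G_E$, so that, exactly as in the representations \eqref{FC1} and in Proposition \ref{HilleStrip}, functions of $J_E$ act as Fourier multipliers in the logarithmic variable $t = \log x$: the functions $x^{-iu}$ are generalized eigenfunctions, $G_E(s)\,x^{-iu} = e^{ius} x^{-iu}$, whence $J_E$ acts as multiplication by $iu$. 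Consequently $T(w)$ acts on the mode $x^{-iu}$ by the scalar $\exp\left((-1)^{n+1} w (iu)^{2\alpha}\right)$, and Fourier inversion in $u$ reconstructs $T(w)$ as convolution in $t$ against the kernel $\tfrac{1}{2\pi}\int_{-\infty}^{\infty} e^{-iut}\exp\left((-1)^{n+1} w (iu)^{2\alpha}\right) du$. Unfolding the action $(G_E(s) f)(x) = f(e^{-s}x)$ and returning to the multiplicative variable then produces the stated double integral.

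The main obstacle is this last step: rigorously identifying the abstractly defined operator $h_w(J_E)$ with the concrete Fourier-multiplier operator on the interpolation space $E$, and justifying the interchange of integrations. The inner $u$-integral converges absolutely precisely because, for $w$ in the open sector $S_{\pi(1/2 - |\alpha - n|)}$, the real part of $(-1)^{n+1} w (iu)^{2\alpha}$ behaves like $-c\,|u|^{2\alpha}$ with $c>0$; this uses that $\mathrm{sgn}\,\cos(\pi\alpha) = (-1)^n$ for $\alpha \in (n - \tfrac12, n + \tfrac12)$, which is the analytic heart of why the admissible opening for $w$ is exactly $\pi\left(\tfrac12 - |\alpha - n|\right)$. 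I would establish the multiplier identity by first verifying it on a dense regularizing subalgebra, where the functional calculus is an absolutely convergent contour integral that can be deformed onto the spectral line and evaluated through $G_E$, and then extend by the boundedness furnished by the $(L^1\!-\!L^\infty)$-interpolation property of $E$; some extra care is needed with the branch of $z^{2\alpha}$ and with the Hölder behaviour of $g$ at $\pm a$ in the degenerate cases $\underline{\eta}_E = 0$ or $\overline{\eta}_E = 1$, which is exactly what the exactly-polynomial-limit hypothesis of Theorem \ref{sectorialTheorem} and Corollary \ref{StripCor} is designed to control.
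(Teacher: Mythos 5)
Your argument is correct and follows essentially the same route as the paper: Corollary \ref{StripCor} (with exponent $2\alpha$) plus the sectorial/holomorphic-semigroup correspondence of Corollary \ref{holomorphicCor} for generation, Proposition \ref{domainProp} for the closure of the domain, and Theorem \ref{compositionRule} for the identification $T(w)=(\exp_{-w}\circ g)(J_E)$. The ``main obstacle'' you single out -- turning $h_w(J_E)$ into the explicit kernel via Fourier inversion in the logarithmic variable -- is precisely what Lemma \ref{FourierofF}, Remark \ref{psiFdecayingRemark} and Proposition \ref{HilleStrip} in the appendix already provide, so no separate multiplier argument is needed; your observation that $\operatorname{sgn}\cos(\pi\alpha)=(-1)^{n}$ governs the decay of the integrand (i.e.\ that the exponent should be read as $(iu)^{2\alpha}$) is a correct and welcome clarification of the stated formula.
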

	
	\begin{proof}
		That the operator $(-1)^{n+1} (J_E)^{2\alpha}$ generates an exponentially bounded  holomorphic semigroup with the given angle follows from Corollary \ref{StripCor}. The expression given for $T_{(-1)^{n+1} (J_E)^{2\alpha}}$ is an immediate consequence of Theorem \ref{compositionRule} and Proposition \ref{HilleStrip}. The assertion about $\overline{\mathcal{D} ((J_E)^{2\alpha})}$ follows from Proposition \ref{domainProp}.
	\end{proof}
	
	Next, we have the following result for the operator $(I-\mathcal{\riemannDerivative}_E^\alpha)^2$.
	
	\begin{proposition}\label{CaProposition}
		Let $E$ be a $(L^1-L^\infty)$-interpolation space with $\overline{\eta}_E < 1$, $n \in \NN$ and $\alpha\in \left(n-\frac{1}{2}, n+\frac{1}{2}\right)$. Then,  the operator $(-1)^{n+1} (I-\mathcal{\riemannDerivative}_E^\alpha)^2$ generates an exponentially bounded holomorphic semigroup $T_{(-1)^{n+1} (I-\mathcal{\riemannDerivative}_E^\alpha)^2}$ of angle $\pi\left(\frac{1}{2}  - |\alpha - n|\right)$, which is given by
		\begin{align*}
			\left(T_{(-1)^{n+1}(I-\mathcal{\riemannDerivative}_E^\alpha)^2}(w)f\right)(x) &= \frac{1}{2\pi} \int_0^\infty \frac{f(s)}{s} 
			\int_{-\infty}^\infty \left(\frac{s}{x}\right)^{iu} \exp\left((-1)^{n+1} w \left(1-\frac{1}{\alpha \mathbb B(1-iu,\alpha)}\right)^2\right) \, du ds, 
		\end{align*}
		for $ x > 0, \, w \in S_{\pi\left(\frac{1}{2}  - |\alpha - n|\right)}$ and $f \in E$. In addition, $\overline{\mathcal{D} ((I-\mathcal{\riemannDerivative}_E^\alpha)^2)} = \overline{\mathcal{D} (J_E)}$.
	\end{proposition}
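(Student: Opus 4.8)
The plan is to recognize the operator as a function of $J_E$ and then invoke Theorem \ref{sectorialTheorem}, just as the preceding Proposition \ref{fractionalPowerProposition} invokes Corollary \ref{StripCor}; the essential new difficulty is that the symbol is no longer a pure power. Since $\mathcal{\riemannDerivative}_E^\alpha=(\mathcal C_{\alpha,E})^{-1}=\big(\alpha\mathbb B(I-J_E,\alpha)\big)^{-1}$, I would set
\[
\psi(z):=1-\frac{1}{\alpha\mathbb B(1-z,\alpha)}=1-\frac{\Gamma(1-z+\alpha)}{\Gamma(\alpha+1)\,\Gamma(1-z)},
\]
so that $I-\mathcal{\riemannDerivative}_E^\alpha=\psi(J_E)$ and $(-1)^{n+1}(I-\mathcal{\riemannDerivative}_E^\alpha)^2=-\finalfunct(J_E)$ with $\finalfunct:=(-1)^n\psi^2$. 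That $\psi,\finalfunct\in\mathcal M_A$ follows from Lemma \ref{functionalCalculusProperties}: $f_\alpha(z):=\alpha\mathbb B(1-z,\alpha)$ lies in $H(J_E)$ (it is $\mathcal C_{\alpha,E}$) and is injective, so $1/f_\alpha\in\mathcal M_A$ and hence $\psi,\psi^2\in\mathcal M_A$. The whole statement then reduces to proving that $\rho I+\finalfunct(J_E)$ is sectorial of angle $\pi|\alpha-n|$ for a suitable $\rho\ge 0$ (with $\rho=0$ when $a=0$), exactly in the spirit of Corollary \ref{StripCor}: once this is known, the equivalence between exponentially bounded holomorphic semigroups and bounded ones together with Corollary \ref{holomorphicCor} yields generation of a semigroup of angle $\frac{\pi}{2}-\pi|\alpha-n|=\pi\big(\frac12-|\alpha-n|\big)$, Theorem \ref{compositionRule} combined with Proposition \ref{HilleStrip} (folding the shift $\rho$ back into the symbol) produces the stated integral formula with inner factor $\exp\big((-1)^{n+1}w\,\psi(iu)^2\big)$, and Proposition \ref{domainProp} delivers the domain assertion.

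Next I would dispatch the cheap hypotheses of Theorem \ref{sectorialTheorem} via the asymptotics of the Gamma quotient. Because $\overline{\eta}_E<1$, the spectral strip stays strictly to the left of the pole of $\Gamma(1-z)$ at $z=1$, so $\psi$ is holomorphic, pole-free and finite at the finite reference points $\pm a$; thus $\finalfunct$ is regular there and conditions (b)–(c) hold automatically at $\pm a$. At infinity the estimate $\Gamma(1-z)/\Gamma(1-z+\alpha)\sim(-z)^{-\alpha}$ gives $\psi(z)\sim -(-z)^\alpha/\Gamma(\alpha+1)$, hence $|\finalfunct(z)|\sim|z|^{2\alpha}$: the symbol tends to $\infty$ exactly polynomially of order $2\alpha$, $1/\finalfunct$ is regular at $\infty$ since $\int_{|z|>R}|z|^{-2\alpha-1}\,|dz|<\infty$, and $\finalfunct$ is quasi-regular at $M_A$. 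This also pins down $\finalfunct^{-1}(\infty)\cap M_A=\{\infty\}$, which is precisely what turns Proposition \ref{domainProp} into $\overline{\mathcal D((I-\mathcal{\riemannDerivative}_E^\alpha)^2)}=\overline{\mathcal R(\infty I-A)}=\overline{\mathcal D(J_E)}$.

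The main obstacle is hypothesis (a): for every $\auxangle>\pi|\alpha-n|$ I must exhibit $\varphi<\pi/2$ and $\rho\ge0$ with $\rho+\finalfunct(BS_{\varphi,a})\subset\overline{S_\auxangle}\cup\{\infty\}$, i.e. I must control $\arg\big((-1)^n\psi(z)^2\big)$ over the whole narrowed strip. For large $|z|$ the leading term settles this: on the central axis $z\approx iu$ one finds $\arg\big((-1)^n(-z)^{2\alpha}\big)\equiv\pi(\alpha-n)\pmod{2\pi}$ as $u\to\pm\infty$, where $\alpha\in(n-\tfrac12,n+\tfrac12)$ is used to fold the argument into $(-\tfrac\pi2,\tfrac\pi2)$ and fix the limiting half-angle at $\pi|\alpha-n|$; taking $\varphi$ close to $\pi/2$ keeps the whole tail inside $S_{\auxangle}$. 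The bounded part of the range is compact (by holomorphy of $\psi$ on a closed box) and is forced into $\overline{S_\auxangle}$ only after the real shift $\rho$ — this shift is genuinely needed because for $z$ real in the strip $\psi(z)$ is real, so $(-1)^n\psi(z)^2$ sits on the negative real axis when $n$ is odd, whence $\rho>0$ unless $a=0$. The delicate step, and the heart of the argument, is the \emph{uniform} control of $\arg\psi$ as $|z|\to\infty$ across the full width $|\Re z|\le a$ of the strip, which I expect to obtain from refined asymptotics of $\Gamma(1-z)/\Gamma(1-z+\alpha)$ (equivalently bounds on $\Gamma'/\Gamma$ along vertical lines). Once (a) is secured, the resolvent bound demanded by Theorem \ref{sectorialTheorem} is produced by the uniformly bounding families of functions constructed in Proposition \ref{existenceProposition}, and the proof closes.
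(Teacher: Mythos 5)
Your proposal is correct and follows essentially the same route as the paper: write the operator as a function of $J_E$, use the asymptotics $\Gamma(z+\lambda)/\Gamma(z)=z^{\lambda}(1+O(|z|^{-1}))$ (the paper's \eqref{gammaAsymptotic}) to show $\rho+(-1)^{n}\psi^{2}$ satisfies the hypotheses of Theorem \ref{sectorialTheorem}/Corollary \ref{holomorphicCor} for large $\rho$, then obtain the integral formula from Theorem \ref{compositionRule} and Proposition \ref{HilleStrip} and the domain from Proposition \ref{domainProp}. The "delicate" uniform control of $\arg\psi$ across the strip that you defer is exactly what the uniform $O(|z|^{-1})$ error term in \eqref{gammaAsymptotic} supplies, so your argument closes the same way the paper's does.
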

	
	\begin{proof}
		First, recall that $\mathcal{\riemannDerivative}_E^\alpha = (\alpha \mathbb B(I-J_E,\alpha))^{-1}$, so $(I- \mathcal{\riemannDerivative}_E^\alpha)^2 = (I - \alpha \mathbb B(I-J_E,\alpha))^{-1}$. It follows that
		$$\left(1-\frac{1}{\alpha \mathbb B(1-z,\alpha)}\right)^2 = \left(1 - \frac{1}{\Gamma(\alpha+1)} \frac{\Gamma(1 + \alpha-z)}{\Gamma(1-z)}\right)^2,$$ 
		which is holomorphic in $\CC \backslash \{1,2,3,...\}$. In addition, for $\lambda,z \in \CC$, one has that
		\begin{equation}\label{gammaAsymptotic}
			\frac{\Gamma(z+\lambda)}{\Gamma(z)} = z^\lambda \left(1+O(|z|^{-1})\right), \quad \mbox{ as } |z| \to \infty,
		\end{equation}
		whenever $z\neq 0, -1, -2,...$ and $z \neq -\lambda, -\lambda-1, -\lambda-2...$, (see e.g.  \cite{tricomi1951asymptotic} for more details). As a consequence, one gets that 
		\begin{align*}
			\left(1-\frac{1}{\alpha \mathbb B(1-z,\alpha)}\right)^2 = \frac{(-z)^{2\alpha}}{\alpha} \left(1+O(|z|^{-1})\right), \quad \mbox{ as } |z| \to \infty.
		\end{align*}
		Thus, for any $\finalangle \in \left(0,\pi\left(\frac{1}{2}  - |\alpha - n|\right)\right)$, one can find a $\rho>0$ large enough such that the function $\rho + (-1)^{n+1} \left(1-\frac{1}{\alpha \mathbb B(1-z,\alpha)}\right)^2$ satisfies the hypothesis of Corollary \ref{holomorphicCor}, i.e. $(-1)^{n+1} (I-\mathcal{\riemannDerivative}_E^\alpha)^2$ generates an exponentially bounded holomorphic semigroup of angle $\pi\left(\frac{1}{2}  - |\alpha - n|\right)$. The rest of the statement follows by a similar reasoning as in the proof of Proposition \ref{fractionalPowerProposition}.
	\end{proof}
	
	We have the following generation result for the operator $(\mathcal{W}_E^\alpha)^2$.
	
	\begin{proposition}\label{CaastProposition}
		Let $E$ be a $(L^1-L^\infty)$-interpolation space with $\underline{\eta}_E > 0$, $n \in \NN$ and $\alpha \in  \left(n-\frac{1}{2}, n+\frac{1}{2}\right)$. Then, the operator $(-1)^{n+1}(\mathcal{W}_E^\alpha)^2$ generates an exponentially bounded holomorphic semigroup $T_{(-1)^{n+1} (\mathcal{W}_E^\alpha)^2}$ of angle $\pi\left(\frac{1}{2}  - |\alpha - n|\right)$, which is given by
		\begin{align*}
			\left(T_{(-1)^{n+1} (\mathcal{W}_E^\alpha)^2}(w)f\right)(x) &= \frac{1}{2\pi} \int_0^\infty \frac{f(s)}{s} 
			\int_{-\infty}^\infty \left(\frac{s}{x}\right)^{iu+\delta} \exp\left((-1)^{n+1} w \left(\alpha \mathbb{B}(iu+\delta,\alpha)\right)^{-2}\right) \, du ds,  
		\end{align*}
		for $x>0$, $w \in S_{\pi\left(\frac{1}{2}  - |\alpha - n|\right)}$,  and $f \in E$,  where $\delta$ is any number $\delta>0$. In addition, $\overline{\mathcal{D} ((\mathcal{W}_E^\alpha)^2)} = \overline{\mathcal{D} (J_E)}$.
	\end{proposition}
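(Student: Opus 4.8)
The plan is to mimic the proofs of Propositions~\ref{fractionalPowerProposition} and~\ref{CaProposition}: realize $(\mathcal W_E^\alpha)^2$ as $g(J_E)$ for an explicit meromorphic symbol and then invoke Corollary~\ref{holomorphicCor}. Writing $A$ for the bisectorial shift of $J_E$ of half-width $a$, recall that $\mathcal W_E^\alpha=(\alpha\mathbb B(J_E,\alpha))^{-1}$, so that
\begin{align*}
(\mathcal W_E^\alpha)^2=g(J_E),\qquad g(z):=\big(\alpha\mathbb B(z,\alpha)\big)^{-2}=\frac{1}{\Gamma(\alpha+1)^2}\Big(\frac{\Gamma(z+\alpha)}{\Gamma(z)}\Big)^{2}.
\end{align*}
Since $\mathcal W_E^\alpha\in\mathcal M_A$, also $g\in\mathcal M_A$ by Lemma~\ref{functionalCalculusProperties}, and the zeros and poles of $g$, located at $z=0,-1,-2,\dots$ and $z=-\alpha,-\alpha-1,\dots$, all lie in $\{\Re z\le 0\}$; as $\underline{\eta}_E>0$ they are disjoint from $\widetilde\sigma(J_E)$ and are avoided by $BS_{\varphi,a}$ once $\varphi$ is close to $\pi/2$. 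The analytic heart of the argument is the Gamma asymptotic~\eqref{gammaAsymptotic}, which yields
\begin{align*}
g(z)=\frac{z^{2\alpha}}{\Gamma(\alpha+1)^2}\big(1+O(|z|^{-1})\big),\qquad |z|\to\infty,
\end{align*}
so $g$ behaves at infinity exactly like the fractional power appearing in Proposition~\ref{fractionalPowerProposition}.

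Next I would verify the hypotheses of Corollary~\ref{holomorphicCor} for the shifted symbol $g_\rho:=\rho+(-1)^{n}g$, with $\rho>0$ to be fixed. Regularity is immediate: at each finite point of $M_A$ among $\{-a,a\}$ (which in $J_E$-coordinates sits on $\{\Re z>0\}$) the function $g$ has a finite nonzero limit, hence is regular there, while at $\infty$ it tends to $\infty$ exactly polynomially of order $2\alpha$; thus $g_\rho$ is quasi-regular at $M_A$ with $M_A\cap g_\rho^{-1}(\{0,\infty\})=\{\infty\}$, after discarding the finitely many values of $\rho$ that would annihilate a finite limit. The decisive hypothesis is~(a): that $g_\rho$ maps some $BS_{\varphi,a}$ into a sector of half-angle $<\pi/2$. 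On the bisector $\arg z\to\pm\pi/2$ as $|z|\to\infty$, so the displayed asymptotic gives $\arg\big((-1)^{n}g(z)\big)\to n\pi\pm\alpha\pi$, whose residue modulo $2\pi$ has magnitude tending to $\pi|\alpha-n|<\pi/2$; choosing $\varphi$ close enough to $\pi/2$ confines the image near infinity to $\overline{S_{\pi|\alpha-n|+\varepsilon}}$, where moreover $|g(z)|$ is large. On the remaining bounded part of $BS_{\varphi,a}$ the function $g$ is bounded, so taking $\rho$ large pushes $g_\rho$ into an arbitrarily thin sector about the positive real axis. Hence $g_\rho(J_E)$ is sectorial, and by the infimum characterization in Theorem~\ref{sectorialTheorem} its sharp angle is $\pi|\alpha-n|$.

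Consequently Corollary~\ref{holomorphicCor} shows that $-g_\rho(J_E)=(-1)^{n+1}g(J_E)-\rho I$ generates a bounded holomorphic semigroup of angle $\tfrac{\pi}{2}-\pi|\alpha-n|=\pi\big(\tfrac12-|\alpha-n|\big)$; equivalently $(-1)^{n+1}(\mathcal W_E^\alpha)^2$ generates an exponentially bounded holomorphic semigroup of that angle. For the kernel I would apply Theorem~\ref{compositionRule}: the semigroup of the shifted generator equals $(\exp_{-w}\circ g_\rho)(J_E)$, and reinstating the factor $e^{w\rho}$ for the unshifted generator cancels the $\rho$-contribution, leaving
\begin{align*}
T_{(-1)^{n+1}(\mathcal W_E^\alpha)^2}(w)=\Big(z\mapsto\exp\big((-1)^{n+1}w\,(\alpha\mathbb B(z,\alpha))^{-2}\big)\Big)(J_E).
\end{align*}
Representing this function of $J_E$ through the group $G_E$ by Proposition~\ref{HilleStrip}, and evaluating the resulting Mellin-type integral on a vertical line $\Re z=\delta$ for any $\delta>0$ (the contour must stay to the right of the singularity of $\mathbb B(\cdot,\alpha)$ at $0$, and independence of $\delta$ follows from Cauchy's theorem), yields exactly the stated expression. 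Finally, the domain assertion is Proposition~\ref{domainProp}: since $g^{-1}(\infty)\cap M_A=\{\infty\}$, one gets $\overline{\mathcal D((\mathcal W_E^\alpha)^2)}=\overline{\mathcal R(\infty I-A)}=\overline{\mathcal D(J_E)}$.

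The main obstacle is hypothesis~(a). The infinity analysis is delicate: one must track $\arg z^{2\alpha}$ and its reduction modulo $2\pi$ so that the twist by $(-1)^{n}$ places the magnitude at $\pi|\alpha-n|$ rather than at $\pi(1-|\alpha-n|)$, and then patch this with the bounded region uniformly through a single large $\rho$. Everything else runs parallel to Proposition~\ref{CaProposition} and amounts to citing the machinery already in place.
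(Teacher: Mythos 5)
Your proposal is correct and follows essentially the same route as the paper: the paper's own proof simply defers to the argument of Proposition \ref{CaProposition} (symbol $g(z)=(\alpha\mathbb B(z,\alpha))^{-2}$, the Gamma asymptotic \eqref{gammaAsymptotic}, Corollary \ref{holomorphicCor} after a shift by a large $\rho$, then Theorem \ref{compositionRule}, Proposition \ref{HilleStrip} and Proposition \ref{domainProp}), and you carry out exactly that analogy, including the correct observation that the contour must remain at $\Re z=\delta>0$ because of the singularity of $\mathbb B(\cdot,\alpha)$ at the origin.
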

	
	\begin{proof}
		The proof is analogous to the proof of Proposition \ref{CaProposition}, using that $\mathcal{W}_E^\alpha = (\alpha \mathbb B(J_E,\alpha))^{-1}$. The only difference comes out that one cannot apply Cauchy's Theorem and translate the inner integral path in $u$ to make $\delta = 0$ since the Euler-Beta function $\mathbb B(0,\alpha)$ has an essential singularity for any non natural number $\alpha$.
	\end{proof}
	
	Finally, we have the following generation result for the operator $\mathcal{W}_E^\alpha (I-\mathcal{\riemannDerivative}_E^\alpha)$.
	
	\begin{proposition}\label{CamixProposition}
		Let $E$ be a $(L^1-L^\infty)$-interpolation space with $\underline{\eta}_E > 0$ and $\overline{\eta}_E < 1$, and let $\alpha>0$. Then, $\mathcal{W}_E^\alpha (I-\mathcal{\riemannDerivative}_E^\alpha)$ generates an exponentially bounded holomorphic semigroup $T_{\mathcal{W}_E^\alpha (I-\mathcal{\riemannDerivative}_E^\alpha)}$ of angle $\frac{\pi}{2}$, which is given by
		\begin{align*}
			&\left(T_{\mathcal{W}_E^\alpha (I-\mathcal{\riemannDerivative}_E^\alpha)}(w)f\right)(x) \\
			= &\frac{1}{2\pi} \int_0^\infty \frac{f(s)}{s} \int_{-\infty}^\infty \left(\frac{s}{x}\right)^{iu+\delta} \exp\left(\frac{w}{ \alpha \mathbb B(\delta+iu,\alpha)}\left(1-\frac{1}{\alpha \mathbb B(1-\delta - iu,\alpha)}\right)\right) \, du ds, 
		\end{align*}
		for $ x > 0, \, w \in S_{\frac{\pi}{2}}$,  and $f \in E$,  where $\delta \in (0,1)$ is any number.  In addition, $\overline{\mathcal{D} (\mathcal{W}_E^\alpha (I-\mathcal{\riemannDerivative}_E^\alpha))} = \overline{\mathcal{D} (J_E)}$.
	\end{proposition}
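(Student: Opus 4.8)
The plan is to realize the operator through the natural functional calculus of $J_E$ and then invoke Corollary \ref{holomorphicCor}, exactly in the spirit of Propositions \ref{CaProposition} and \ref{CaastProposition}. Since $\underline{\eta}_E>0$ and $\overline{\eta}_E<1$, a suitable shift of $J_E$ is bisectorial of angle $\pi/2$ and some half-width $a$, and because both indices lie \emph{strictly} inside $(0,1)$ one may arrange the representation so that $\sigma(J_E)$ sits strictly inside the strip; hence $\pm a\notin\sigma(J_E)$ and $M_A=\{\infty\}$. Recalling that $\mathcal{W}_E^\alpha$ and $I-\mathcal{\riemannDerivative}_E^\alpha$ are given by the NFC of $J_E$ through the symbols $w_1(z)=\tfrac{1}{\alpha\mathbb B(z,\alpha)}$ and $w_2(z)=1-\tfrac{1}{\alpha\mathbb B(1-z,\alpha)}$, I would first check, using Lemma \ref{functionalCalculusProperties}(c) and the injectivity of the two factors, that
\[
\mathcal{W}_E^\alpha(I-\mathcal{\riemannDerivative}_E^\alpha)=g(J_E),\qquad g(z):=\frac{1}{\alpha\mathbb B(z,\alpha)}\Big(1-\frac{1}{\alpha\mathbb B(1-z,\alpha)}\Big),
\]
the product of the two closed operators agreeing with $(w_1w_2)(J_E)$ on the relevant intersection of domains. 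Writing $g$ via ratios of Gamma functions, its only poles sit at $z=-\alpha-k$ and $z=1+\alpha+k$ ($k\ge0$), which stay away from $[\underline\eta_E,\overline\eta_E]\subset(0,1)$ once $\varphi$ is close enough to $\pi/2$, so $g$ is holomorphic on the (narrow) bisector.

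The analytic heart is the behaviour of $g$ at $\infty$. Using the asymptotics \eqref{gammaAsymptotic} for $\Gamma(z+\lambda)/\Gamma(z)$ I obtain $w_1(z)=\tfrac{z^\alpha}{\Gamma(\alpha+1)}(1+O(|z|^{-1}))$ and $w_2(z)=-\tfrac{(-z)^\alpha}{\Gamma(\alpha+1)}(1+O(|z|^{-1}))$, so that $g(z)=-\tfrac{z^\alpha(-z)^\alpha}{\Gamma(\alpha+1)^2}(1+O(|z|^{-1}))$ as $|z|\to\infty$. The decisive point is a \emph{phase cancellation}: on the imaginary axis $z^\alpha$ contributes argument $\alpha\pi/2$ while $(-z)^\alpha$ contributes $-\alpha\pi/2$, so $z^\alpha(-z)^\alpha$ is positive real and $g(iu)\sim -u^{2\alpha}/\Gamma(\alpha+1)^2\in\RR^-$ for \emph{every} $\alpha>0$. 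This is exactly why no correction $(-1)^{n+1}$ is needed and why the angle is $\pi/2$ for all $\alpha$, in contrast with the square $(\mathcal{W}_E^\alpha)^2$ of Proposition \ref{CaastProposition}. More precisely, for $z\in BS_{\varphi,a}$ the direction $\arg z$ deviates from $\pm i$ by at most $\pi/2-\varphi$, whence the argument of $-g(z)$ at infinity stays within $2\alpha(\pi/2-\varphi)$ of $0$, which tends to $0$ as $\varphi\uparrow\pi/2$.

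Thus, given any $\auxangle>0$, I would choose $\varphi$ close to $\pi/2$ so that $-g$ sends the far part of $BS_{\varphi,a}$ into $\overline{S_{\auxangle}}$, and then pick $\rho>0$ large so that $\rho-g$ pushes the bounded part into $\overline{S_{\auxangle}}$ as well; together with quasi-regularity and the exact polynomial limit $g(\infty)=\infty$ of order $2\alpha$, this makes $\rho-g$ satisfy the hypotheses of Theorem \ref{sectorialTheorem}. Consequently $\rho I-g(J_E)$ is sectorial of angle $\auxangle$, and Corollary \ref{holomorphicCor} gives that $g(J_E)-\rho I$ generates a bounded holomorphic semigroup of angle $\tfrac\pi2-\auxangle$; hence $g(J_E)$ generates an exponentially bounded holomorphic semigroup, and letting $\auxangle\downarrow0$ yields angle exactly $\pi/2$. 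For the explicit kernel I would apply Theorem \ref{compositionRule} (with $\finalfunct=-g$), obtaining $T_g(w)=(\exp(w\,g))(J_E)$, and then the representation of functions of $J_E$ along a vertical line $\Re z=\delta$ (Proposition \ref{HilleStrip}); evaluating $g$ at $z=\delta+iu$ reproduces the stated double integral. Here $\delta$ must lie in $(0,1)$ — not merely $\delta>0$ as in Proposition \ref{CaastProposition} — since one meets the essential singularities of $\mathbb B(\cdot,\alpha)$ both at $\Re z=0$ (from $\mathcal W_E^\alpha$) and at $\Re z=1$ (from $I-\mathcal{\riemannDerivative}_E^\alpha$). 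Finally the domain identity follows from Proposition \ref{domainProp}: since $M_A=\{\infty\}$ and $g(\infty)=\infty$, one gets $\overline{\mathcal D(g(J_E))}=\overline{\mathcal R(\infty I-J_E)}=\overline{\mathcal D(J_E)}$.

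The main obstacle I expect is the \emph{uniform} control of the range of $-g$ over the whole bisector, not just near $\infty$: the Gamma asymptotics bound the argument only for $|z|$ large, so one must match this with a compactness argument on the bounded part, where the shift $\rho$ does the work, while ensuring that adding a fixed $\rho$ to values of size $|z|^{2\alpha}$ does not spoil the sector bound in the transition region. A secondary technical point is justifying the operator identity $\mathcal{W}_E^\alpha(I-\mathcal{\riemannDerivative}_E^\alpha)=g(J_E)$ at the level of domains, and verifying that $g$ has no poles on the chosen bisector.
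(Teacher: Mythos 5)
Your proposal follows essentially the same route as the paper's (very terse) proof: realize the operator as $g(J_E)$ via the NFC of $J_E$, reduce everything to the Gamma-function asymptotics of the symbol at infinity via \eqref{gammaAsymptotic}, and observe the phase cancellation in $z^\alpha(-z)^\alpha$ — which is precisely the point the paper isolates as the reason the result holds for every $\alpha>0$ with angle $\frac{\pi}{2}$ and no $(-1)^{n+1}$ correction — before invoking Corollary \ref{holomorphicCor}, Theorem \ref{compositionRule}, Proposition \ref{HilleStrip} and Proposition \ref{domainProp} exactly as in Propositions \ref{CaProposition} and \ref{CaastProposition}. Your expansion $g(z)=-z^\alpha(-z)^\alpha\,\Gamma(\alpha+1)^{-2}\left(1+O(|z|^{-1})\right)$ is in fact the correct form of the displayed asymptotic (the paper's constant $\tfrac{1}{2\alpha}$ and sign appear to be a typo, as the case $\alpha=1$, where $g(z)=z^2$, confirms).
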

	
	\begin{proof}
		The proof is analogous to the proof of Propositions \ref{CaProposition} and \ref{CaastProposition}.  Here,  the statement is valid for any $\alpha >0$ since, by \eqref{gammaAsymptotic}, we have that
		\begin{align*}
			\frac{1}{ \alpha \mathbb B(z,\alpha)}\left(1-\frac{1}{\alpha \mathbb B(1-z,\alpha)}\right) = \frac{z^\alpha (-z)^\alpha}{2\alpha} (1 + O(|z|^{-1})), \quad \mbox{ as } |z| \to \infty. 
		\end{align*}
		The proof is finished.
	\end{proof}
	
	\subsection{Generalized Black-Scholes partial differential equations}
	
	Let $B_E$ be a closed linear operator on a Banach space $E$, and consider the following abstract Cauchy problem:
	\begin{align}\label{CauchyP}
		\tag{$ACP_0$}
		\begin{cases*}
			\displaystyle u \in C^1((0,\infty);E), \quad  u(t) \in \mathcal{D}(B_E), \quad t>0,
			\\ \displaystyle u' (t) =  B_E u (t), \qquad\,\,\, t>0,
			\\\displaystyle  \lim_{t\downarrow 0}u(t) = f \in E.
		\end{cases*}
	\end{align}
	We say that the Cauchy problem ($ACP_0$) is well-posed,  if for for any $f \in E$, there exists a unique solution $u$. 
	
	We are ready to state the following result concerning the well-posedness of the fractional Black-Scholes equation. Before that, let us state explicitly how these equations look like. Let $n \in \NN$,  $\alpha > 0$, and recall that $\riemannDerivative^\alpha$ and $W^\alpha$ denote, respectively, the Riemann-Liouville and Weyl fractional derivatives of order $\alpha$ acting on the spatial domain.
	\begin{enumerate}
		\item[(1)] In the case $B_E = (-1)^{n+1} (J_E)^{2\alpha}$ we have the following situation:
		\begin{itemize}
	\item 	If $\underline{\eta}_E > 0$, one can use the Balakrishnan representation, to obtain 
		\begin{equation*}
			(-1)^{n+1} u_t (x) = \frac{-1}{\Gamma(1-\alpha)} \int_x^\infty \left(\log \frac{s}{x}\right)^{-2\alpha+n} U_n'(s) \, ds, \quad t, x > 0.
		\end{equation*}
		
		\item If $\underline{\eta}_E = 0$, one has to proceed as in \eqref{balakrishnanRepr} to obtain 
		\begin{equation*}
			(-1)^{n+1} u_t (x)  =\lim_{\varepsilon \downarrow 0}\frac{-1}{\Gamma(1-\alpha)} \int_x^\infty \left(\log \frac{s}{x}\right)^{-2\alpha+n} \left(\frac{x}{s}\right)^\varepsilon U_n'(s) \, ds, \quad t, x > 0.
		\end{equation*}
	\end{itemize}
In both cases,  $n \in \NN$ is the whole part of $2\alpha$ and $U_n := (J_E)^n U$. 

		\item[(2)] If $B_E = (-1)^{n+1} (I - \mathcal{\riemannDerivative}_E^\alpha)^2$, one obtains the equation
		\begin{equation*}
			(-1)^{n+1} u_t = \frac{1}{\Gamma(\alpha+1)^2} \riemannDerivative^\alpha (x^\alpha \riemannDerivative^\alpha (x^\alpha u)) - \frac{2}{\Gamma(\alpha+1)} \riemannDerivative^\alpha (x^\alpha u) + u,\quad t,x>0.
		\end{equation*}
		\item[(3)] If $B_E = (-1)^{n+1} (\mathcal{W}_E^\alpha)^2$, one gets the equation
		\begin{equation*}
			(-1)^{n+1} u_t = \frac{1}{\Gamma(\alpha+1)^2} x^\alpha W^\alpha (x^\alpha W^\alpha u),\quad t,x>0.
		\end{equation*}
		\item[(4)] The case $B_E = \mathcal{W}_E^\alpha (I - \mathcal{\riemannDerivative}_E^\alpha)$ leads to the equation
		\begin{equation*}
			u_t = \frac{1}{\Gamma(\alpha+1)}x^\alpha W^\alpha u - \frac{1}{\Gamma(\alpha+1)^2} \riemannDerivative^\alpha (x^{2\alpha} W^\alpha u),\quad t,x>0.
		\end{equation*}
	\end{enumerate}
	
	We have the following result.
	
	\begin{theorem}\label{ACP0Theorem}
		Let $E$ be a $(L^1-L^\infty)$-interpolation space with order continuous norm, $n \in \NN$,  and $\alpha>0$. Then,  the following assertions hold.
		\begin{enumerate}[(a)]
			\item If $\alpha \in \left(n-\frac{1}{2}, n+\frac{1}{2}\right)$, then ($ACP_0$) is well-posed with $B_E = (-1)^{n+1} (J_E)^{2\alpha}$.
			\item If $\overline{\eta}_E < 1$ and $\alpha \in \left(n-\frac{1}{2}, n+\frac{1}{2}\right)$, then ($ACP_0$) is well-posed with $B_E = (-1)^{n+1} (I-\mathcal{\riemannDerivative}_E^\alpha)^2$.
			\item If $\underline{\eta}_E > 0$ and $\alpha\in \left(n-\frac{1}{2}, n+\frac{1}{2}\right)$, then ($ACP_0$) is well-posed with $B_E = (-1)^{n+1} (\mathcal{W}_E^\alpha)^2$.
			\item If $\overline{\eta}_E<1$ and $\underline{\eta}_E > 0$, then ($ACP_0$) is well-posed with $B_E = \mathcal{W}_E^\alpha (I-\mathcal{\riemannDerivative}_E^\alpha)$.
		\end{enumerate}
		In any case, the solution $u$ of ($ACP_0$) is given by $u(t) = T_{B_E}(t) f$ for $t > 0$. In addition, identifying $u(t,x) = u(t)(x)$, we obtain that $u \in C^\infty((0,\infty) \times (0,\infty))$.
	\end{theorem}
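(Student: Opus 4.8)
The plan is to deduce all four well-posedness assertions uniformly from the generation results already established in Propositions \ref{fractionalPowerProposition}--\ref{CamixProposition}, together with the classical correspondence between generators of holomorphic semigroups and well-posed homogeneous Cauchy problems. First I would note that in each of the cases (a)--(d) the hypotheses on the Boyd indices (namely $\overline{\eta}_E<1$ and/or $\underline{\eta}_E>0$) are precisely those required by the corresponding proposition, so that $B_E$ generates an exponentially bounded holomorphic semigroup $T_{B_E}$ of strictly positive angle, and moreover $\overline{\mathcal{D}(B_E)}=\overline{\mathcal{D}(J_E)}$. Since $E$ has order continuous norm, the dilation group $G_E$ is strongly continuous, hence its generator $J_E$ is densely defined, i.e.\ $\overline{\mathcal{D}(J_E)}=E$. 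Combining these facts gives $\overline{\mathcal{D}(B_E)}=E$, so that the space of strong continuity satisfies $\mathbb{D}_{T_{B_E}}=\overline{\mathcal{D}(B_E)}=E$; in particular $\lim_{t\downarrow 0}T_{B_E}(t)f=f$ for \emph{every} $f\in E$.

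With this in hand I set $u(t):=T_{B_E}(t)f$. The smoothing property of holomorphic semigroups yields $u(t)\in\bigcap_{k\ge 1}\mathcal{D}(B_E^{k})$ for every $t>0$, that $t\mapsto u(t)$ lies in $C^\infty((0,\infty);E)$, and that $u'(t)=B_E u(t)$; this settles the first two lines of ($ACP_0$), while the initial condition $\lim_{t\downarrow 0}u(t)=f$ is exactly the strong continuity established above. For uniqueness I would invoke the standard argument: if $v$ is any solution, then for fixed $t>0$ the map $s\mapsto T_{B_E}(t-s)v(s)$ is differentiable on $(0,t)$ with derivative $-T_{B_E}(t-s)B_E v(s)+T_{B_E}(t-s)v'(s)=0$ (using that $v(s)\in\mathcal{D}(B_E)$ and that $T_{B_E}(t-s)$ commutes with $B_E$ on its domain), hence constant; letting $s\downarrow 0$ and $s\uparrow t$ and using strong continuity forces $v(t)=T_{B_E}(t)f=u(t)$. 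This proves that the unique solution is $u(t)=T_{B_E}(t)f$.

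It remains to prove the joint regularity $u\in C^\infty((0,\infty)\times(0,\infty))$, which I expect to be the main obstacle. Smoothness in the time variable is essentially free: $w\mapsto T_{B_E}(w)f$ is holomorphic on its sector, so $t\mapsto u(t)$ is $C^\infty$ (indeed real-analytic) as an $E$-valued map. For the spatial variable I would argue directly from the explicit kernel representations supplied by Propositions \ref{fractionalPowerProposition}--\ref{CamixProposition}, each of which has the form $(T_{B_E}(t)f)(x)=\frac{1}{2\pi}\int_0^\infty \frac{f(s)}{s}\int_{-\infty}^\infty (s/x)^{iu+\delta}\,m_t(u)\,du\,ds$ for a suitable $\delta\ge 0$, where the inner multiplier $m_t(u)$ is an exponential whose exponent has real part tending to $-\infty$ like a positive power of $|u|$. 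This super-polynomial decay is exactly what the sign $(-1)^{n+1}$ and the Beta-function asymptotics \eqref{gammaAsymptotic} secure. Differentiating the factor $(s/x)^{iu+\delta}$ in $x$ produces $x^{-k}$ times a polynomial of degree $k$ in $(iu+\delta)$, which is absorbed by the decay of $m_t(u)$; hence one may differentiate under both integral signs arbitrarily often, the resulting integrals converging absolutely and locally uniformly in $(t,x)$.

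The genuine technical work, and the step I regard as the crux, is making this differentiation under the integral rigorous: one must control the inner $u$-integral after each differentiation and verify that the differentiated kernel still acts boundedly on $f\in E\subset L^1(0,\infty)+L^\infty(0,\infty)$, so that the classical theorem on differentiation of parameter-dependent integrals applies. Once this is done, $u$ is smooth in $x$ locally uniformly in $t$, and together with the time-analyticity above this yields $u\in C^\infty((0,\infty)\times(0,\infty))$, completing the proof.
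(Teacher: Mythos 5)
Your treatment of well-posedness is correct and coincides with the paper's: in each case the Boyd-index hypotheses activate Propositions \ref{fractionalPowerProposition}--\ref{CamixProposition}, order continuity of the norm gives $\overline{\mathcal{D}(J_E)}=E$ and hence $\overline{\mathcal{D}(B_E)}=E$, so $T_{B_E}$ is strongly continuous and the generation/well-posedness correspondence (together with the standard uniqueness argument you sketch) settles (a)--(d) and the formula $u(t)=T_{B_E}(t)f$. Time-regularity via holomorphy of the semigroup also matches.

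Where you diverge is the spatial $C^\infty$ regularity, and this is where your argument has a genuine gap. You propose to differentiate the explicit kernel representations under both integral signs, and you yourself flag the decisive step --- controlling the differentiated kernel so that it still integrates absolutely against $f(s)/s$ for $f\in L^1(0,\infty)+L^\infty(0,\infty)$ --- as ``the crux'' without carrying it out. This is not a routine verification: decay of the multiplier $m_t(u)$ in $u$ gives smoothness and boundedness of the inner integral $K_t(y)=\frac{1}{2\pi}\int_{-\infty}^{\infty}y^{iu+\delta}m_t(u)\,du$ as a function of $y=s/x$, but the convergence of $\int_0^\infty K_t(s/x)f(s)\,\frac{ds}{s}$ for merely $f\in L^1+L^\infty$ requires two-sided bounds of the form $|K_t(y)|\lesssim\min\{y^{\delta_1},y^{-\delta_2}\}$, which must be extracted by contour shifts in $u$, not by decay of $m_t$ alone; and the same must be redone after each $x$-differentiation. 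The paper avoids all of this with a short abstract argument: holomorphy gives $u^{(k)}(t)\in\mathcal{D}((B_E)^n)$ for all $k,n$; the regularization argument from the proof of Proposition \ref{domainProp} gives $\mathcal{D}(B_E)\subset\mathcal{D}((J_E)^{\varepsilon})$ for small $\varepsilon>0$, hence $\mathcal{D}((B_E)^n)\subset\mathcal{D}((J_E)^{n\varepsilon})$; and since $\mathcal{D}(J_E)\subset AC_{\rm loc}(0,\infty)$ one has $\mathcal{D}((J_E)^{j+1})\subset C^j(0,\infty)$, whence $u^{(k)}(t)\in C^\infty(0,\infty)$ for every $k$ and $t>0$. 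You should either adopt this domain-embedding route or supply the missing kernel estimates; as written, the joint smoothness claim is not proved.
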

	
	\begin{proof}
		In all cases, $B_E$ is the generator of a holomorphic semigroup with $\overline{\mathcal{D}(B_E)} = \overline{\mathcal{D}(J_E)}$ by Propositions \ref{fractionalPowerProposition}, \ref{CaProposition}, \ref{CaastProposition},  and \ref{CamixProposition}. Moreover, $T_{B_E}$ is strongly continuous since one has that $\mathcal{D}(J_E)$ is dense in $E$ if and only if $E$ has order continuous norm (see e.g. \cite[Remark 4.2]{arendt2002spectrum}). Then, the assertions follow immediately by the relation between the well-posedness of a Cauchy problem, and the fact that $B_E$ generates a strongly continuous semigroup (see for example \cite[Proposition 3.1.2 and Theorem 3.1.12]{arendt2011vector}).
		
		Regarding the regularity result, one has that $u(t)$ is $E$-holomorphic in $t$ in $(0,\infty)$ since $T_{B_E}$ is a holomorphic semigroup. Even more, it satisfies  $u(t) = T_E (t) f$,  $u^{(k)} (t) = (B_E)^k u(t)$, and that $u^{(k)} (t) \in \mathcal{D}((B_E)^n)$ for all $k,n \in \NN$ and $t>0$ (see \cite[Chapter 3]{arendt2011vector}).  Now, reasoning as in the proof of Proposition \ref{domainProp} with any of the operators $B_E$ yields that $\mathcal{D}(B_E) \subset \mathcal D((J_E)^\varepsilon)$ for sufficiently small $\varepsilon >0$. In addition, since  $\mathcal{D}(J_E) \subset AC_{\rm loc}(0,\infty)$, we have that $\mathcal{D}((J_E)^{j+1}) \subset C^j(0,\infty)$. As $u^{(k)} (t) \in \mathcal{D}((B_E)^n) \subset \mathcal{D}((J_E)^{n\varepsilon})$ for all $k,n \in \NN$, one obtains that $u^{(k)}(t) \in C^\infty(0,\infty)$ for all $k\in \NN$ and $t>0$. The proof is finished.
	\end{proof}
	
	\begin{remark}
	As stated in the above proof, $T_{B_E}$ is strongly continuous at $0$ if and only if $E$ has order continuous norm. Hence,  Theorem \ref{ACP0Theorem} does not hold for a general $(L^1-L^\infty)$-interpolation space. To address all interpolation spaces, we follow the ideas given in \cite{arendt2002spectrum} and consider the K\"othe dual $E^\star$ of $E$, given by
	\begin{align*}
		E^\star := \left\{g:(0,\infty)\to\CC\mbox{ measurable and } \; \int_0^\infty |f(x)g(x)|\;dx < \infty \quad \text{for all } f \in E \right\}.
	\end{align*}
	%where $L^0(0,\infty)$ denotes the set of measurable functions on $(0,\infty)$. 
	Every $g \in  E^\star$ defines a bounded (order continuous) linear functional $\varphi_g$ on $E$, given by 
	$$\langle f, \varphi_g\rangle_{E,E^\star}:  = \int_0^\infty f(x)g(x)\;dx\;\mbox{ for all }\; f \in E.$$
	In this way we can identify $E^\star$ with a subspace of the dual $E'$ (and under the present assumptions on $E$, this subspace is norming for $E$).  It is known that, when equipped with the norm $\|g\|_{E^\star} = \|\varphi_g\|_{E'}$, then $E^\star$ is a $(L^1,L^\infty)$-interpolation space on $(0,\infty)$.
	\end{remark}
	
	%We will say that a mapping $\varphi$ from a topological space $U$ to $E$, $\varphi:U \to E$, is $\sigma(E,E^\ast)$ continuous if the associated mapping $U \to \CC$ given by $x \mapsto \langle \varphi(x), y\rangle_{E,E^\ast}$ is continuous for all $\varphi \in E^\ast$. 
	
	Next, we consider the following abstract Cauchy problem:
	\begin{align}\label{CauchyP1}
		\tag{$ACP_1$}
		\begin{cases*}
			\displaystyle u \in C^1((0,\infty);E), \quad  u(t) \in \mathcal{D}(B_E), \quad t>0,
			\\ \displaystyle u' (t) =  B_E u (t), \qquad\,\,\, t>0,
			\\  \displaystyle \lim_{t\downarrow 0}\langle u(t), \varphi\rangle_{E,E^\star} = \langle f, \varphi \rangle_{E,E^\star}, \quad f \in E \text{ and for all } \varphi \in E^\star.
		\end{cases*}
	\end{align}
	Again, we  say that ($ACP_1$) is well-posed if, for any $f \in E$, there exists a unique $u$ which is a solution of ($ACP_1$).
	
	We have the following result.
	
	\begin{theorem}\label{ACP1Theorem}
		Let $E$ be a $(L^1-L^\infty)$-interpolation space,  $n \in \NN$,  and $\alpha >0$. Then,  the following assertions hold.
		\begin{enumerate}[(a)]
			\item  If $\alpha \in \left(n-\frac{1}{2}, n+\frac{1}{2}\right)$, then ($ACP_1$) is well-posed with $B_E = (-1)^{n+1} (J_E)^{2\alpha}$.
			\item If  $\overline{\eta}_E<1$ and if $\alpha \in \left(n-\frac{1}{2}, n+\frac{1}{2}\right)$, then ($ACP_1$) is well-posed with $B_E = (-1)^{n+1} (I-\mathcal{\riemannDerivative}_E^\alpha)^2$.
			\item If $\underline{\eta}_E>0$ and if $\alpha \in \left(n-\frac{1}{2}, n+\frac{1}{2}\right)$, then ($ACP_1$) is well-posed with $B_E = (-1)^{n+1}(\mathcal{W}_E^\alpha)^2$.
			\item If $\overline{\eta}_E<1$ and $\underline{\eta}_E > 0$, then ($ACP_1$) is well-posed with $B_E = \mathcal{W}_E^\alpha (I-\mathcal{\riemannDerivative}_E^\alpha)$.
		\end{enumerate}
		In any case, the solution $u$ of ($ACP_1$) is given by $u(t) = T_{B_E}(t) f$ for $t > 0$. In addition, identifying $u(t,x) = u(t)(x)$, we obtain that $u \in C^\infty((0,\infty) \times (0,\infty))$.
	\end{theorem}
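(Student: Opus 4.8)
The plan is to exploit that (\ref{CauchyP1}) differs from (\ref{CauchyP}) only in the way the initial datum is attained --- now in the weak-star sense of the pairing $\langle\cdot,\cdot\rangle_{E,E^\star}$ --- so that most of Theorem \ref{ACP0Theorem} can be reused unchanged. First I would take $u(t):=T_{B_E}(t)f$, where in each of the four cases $T_{B_E}$ is the exponentially bounded holomorphic semigroup furnished by Propositions \ref{fractionalPowerProposition}, \ref{CaProposition}, \ref{CaastProposition} and \ref{CamixProposition}. The existence and holomorphy of $T_{B_E}$, the identity $u'(t)=B_E u(t)$ for $t>0$, and the joint regularity $u\in C^\infty((0,\infty)\times(0,\infty))$ all concern only the behaviour of the semigroup for $t>0$, and were obtained in the proof of Theorem \ref{ACP0Theorem} without ever invoking order continuity of the norm; hence they transfer verbatim. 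What genuinely has to be proved for a general interpolation space $E$ is that $\langle T_{B_E}(t)f,\varphi\rangle_{E,E^\star}\to\langle f,\varphi\rangle_{E,E^\star}$ as $t\downarrow0$ for every $\varphi\in E^\star$, together with uniqueness in this weak-star class.

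For the initial condition I would pass to the K\"othe dual $E^\star$, itself an $(L^1-L^\infty)$-interpolation space. The change of variables $y=e^{-t}x$ yields the adjoint relation $\langle G_E(t)f,\varphi\rangle_{E,E^\star}=e^{t}\langle f,G_{E^\star}(-t)\varphi\rangle_{E,E^\star}$, which identifies the $E$--$E^\star$ adjoint of $J_E$ with $I-J_{E^\star}=\mathcal{\riemannDerivative}^1_{E^\star}$; under this duality the Riemann--Liouville and Weyl roles interchange, so that each operator $B_E$ in the list is the pre-adjoint of an operator $\widetilde B$ of exactly the same type built from $J_{E^\star}$. Consequently $\langle T_{B_E}(t)f,\varphi\rangle_{E,E^\star}=\langle f,T_{\widetilde B}(t)\varphi\rangle_{E,E^\star}$, and the task reduces to proving $T_{\widetilde B}(t)\varphi\to\varphi$ in the topology $\sigma(E^\star,E)$ for every $\varphi\in E^\star$. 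Since $T_{\widetilde B}$ is strongly continuous on $\overline{\mathcal{D}(J_{E^\star})}$ and the operators $T_{B_E}(t)$ are uniformly bounded for $t\in(0,1]$ by exponential boundedness, this is immediate when $\varphi\in\overline{\mathcal{D}(J_{E^\star})}$; and by the analogue for $E^\star$ of Proposition \ref{domainProp}, which describes $\overline{\mathcal{D}(J_{E^\star})}$ as an intersection of closed ranges $\overline{\mathcal{R}(dI-A)}$, a general $\varphi$ differs from such elements only in the at most three directions attached to the singular points of $M_A$.

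The hard part is precisely these complementary directions, which appear exactly when $E^\star$ fails to have order continuous norm --- for instance $E=L^1\cap L^\infty$, $E^\star=L^1+L^\infty$, where neither space is order continuous and $\overline{\mathcal{D}(J_{E^\star})}$ is a proper closed subspace, so the reduction cannot be completed by norm-density and uniform boundedness alone. I would dispose of them with the device already used in Proposition \ref{domainProp}: subtract from the symbol of $\widetilde B$ the explicit correctors $f_d$ built from the resolvents of $A$ that appear in Propositions \ref{existenceProposition} and \ref{domainProp}, reducing a general $\varphi$ to one on which $\widetilde B$ has a regular, vanishing limit at each point of $M_A$, and then verify the required convergence on the finitely many corrector directions directly. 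On those directions the verification rests on the weak-star continuity of the group $G_E$ --- which follows from the adjoint relation above together with dominated convergence --- and on the explicit Mellin-multiplier kernels displayed in Propositions \ref{fractionalPowerProposition}--\ref{CamixProposition}, whose symbols converge pointwise to $1$ as $t\downarrow0$.

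Finally, for uniqueness I would show that a weak-star-continuous solution $v$ of (\ref{CauchyP1}) with $f=0$ must vanish identically. Pairing the equation against any $\varphi\in\overline{\mathcal{D}(J_{E^\star})}$ and using the strong continuity of $T_{\widetilde B}$, a Laplace-transform/resolvent argument in $t$ shows that $t\mapsto\langle v(t),\varphi\rangle_{E,E^\star}$ solves a scalar problem with zero datum and hence vanishes, while the remaining finitely many directions are controlled by the same correctors as above. Since $E^\star$ is norming for $E$, this forces $v\equiv0$, so that $u(t)=T_{B_E}(t)f$ is the unique solution of (\ref{CauchyP1}), completing the proof.
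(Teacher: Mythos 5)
Your overall skeleton --- take $u(t)=T_{B_E}(t)f$, observe that everything except the attainment of the initial datum carries over from Theorem \ref{ACP0Theorem}, and reduce the theorem to the $\sigma(E,E^\star)$-convergence $\langle T_{B_E}(t)f,\varphi\rangle_{E,E^\star}\to\langle f,\varphi\rangle_{E,E^\star}$ as $t\downarrow 0$ --- is exactly the paper's. But the way you propose to prove that convergence has a genuine gap. The paper proves it directly on $E$ (Lemma \ref{dualContinuity}): it writes $T_{\finalfunct}(t)=\int_{-\infty}^{\infty}\genericGroup(s)\,\mu_{h_t}(ds)$ with $h_t(z)=\exp(-t\finalfunct(z))$ via Proposition \ref{HilleStrip} and Lemma \ref{FourierofF}, uses the $\sigma(E,E^\star)$-continuity of the group $G_E$ (which holds for \emph{every} $f\in E$, not only for $f\in\overline{\mathcal{D}(J_E)}$) to control the part of the integral with $|s|<\delta$, and disposes of the tail $|s|>\delta$ by a Cauchy-theorem/dominated-convergence argument on the kernels $\mu_{h_t}$. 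You instead dualize to $E^\star$ and try to reduce to strong continuity of $T_{\widetilde B}$ on $\overline{\mathcal{D}(J_{E^\star})}$ plus ``at most three directions attached to the singular points of $M_A$.'' That last step is where the argument fails: the complement of $\overline{\mathcal{D}(J_{E^\star})}$ in $E^\star$ is not finite-dimensional in the cases that matter (e.g.\ for $E^\star$ of $L^\infty$ type the quotient by the space of strong continuity is infinite-dimensional), and there is no bounded projection onto a finite set of ``corrector directions.'' The functions $f_d$ of Propositions \ref{existenceProposition} and \ref{domainProp} are modifications of the \emph{symbol} $\finalfunct$, not projections of the \emph{space} onto complements of $\overline{\mathcal{R}(dI-A)}$; the splitting $X=\mathcal{N}(dI-A)\oplus\overline{\mathcal{R}(dI-A)}$ is only available when $X$ is reflexive (Corollary \ref{domainReflexiveCor}), in which case Theorem \ref{ACP1Theorem} is not needed anyway. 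So your dualization merely transports the original difficulty (weak-star convergence at $0$ for elements outside the closure of the domain of the generator) from $E$ to $E^\star$ without resolving it.

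Two secondary points. First, the duality $\langle G_E(t)f,\varphi_g\rangle_{E,E^\star}=e^{t}\langle f,G_{E^\star}(-t)\varphi_g\rangle_{E,E^\star}$ identifies the adjoint of $J_E$ with $I-J_{E^\star}$, so the dual operators are \emph{not} of ``exactly the same type'': for instance $(I-\mathcal{\riemannDerivative}_E^\alpha)^2$ dualizes to $(I-\mathcal{W}_{E^\star}^\alpha)^2$, which is not in your list; this is repairable via Corollary \ref{holomorphicCor} applied to the new symbols, but it is additional work you have not done. Second, even granting the generation results on $E^\star$, the identification of $T_{B_E}(t)^{*}|_{E^\star}$ with $T_{\widetilde B}(t)$ requires an argument (invariance of $E^\star\subset E'$ under the adjoint semigroup). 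The clean way out is to abandon the dualization and argue as in Lemma \ref{dualContinuity}, i.e.\ on the subordination representation of $T_{B_E}$ in terms of the group $G_E$ itself.
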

	
	To prove the theorem, we need the following lemma.
	
\begin{lemma}\label{dualContinuity}
		Let $a\geq 0$ and let $A \in \BSect(\pi/2,a)$ on $E$ be such that $A$ generates an exponentially bounded  group $(\genericGroup(t))_{t\in \RR}$ for which $\|\genericGroup(t)\| \lesssim e^{a|t|}$ for $t \in \RR$. Let $\finalfunct \in \mathcal{M}_A$ satisfy all the hypothesis in Corollary \ref{holomorphicCor}. Assume furthermore that the following hold:
		\begin{enumerate}[(a)]
			\item $\finalfunct$ is quasi-regular in $\{-a,a,\infty\}$ with $\finalfunct(a), \finalfunct(-a) \neq \infty$.
			\item The group $(\genericGroup(t))_{t\in \RR}$ is $\sigma(E,E^\star)$-continuous, that is,  $\lim_{t\to0}\langle \genericGroup(t) f, \varphi\rangle_{E,E^\star} = \langle f, \varphi \rangle_{E,E^\star}$ for all $f \in E$ and $\varphi \in E^\star$.
		\end{enumerate}  
		Then, the semigroup $(T_\finalfunct(t))_{t \geq 0}$ generated by the operator $-\finalfunct(A)$ is also $\sigma(E,E^\star)$-continuous, i.e.,
		\begin{equation*}
			\lim_{t\downarrow 0}\langle T_\finalfunct(t) f, \varphi\rangle_{E,E^\star} = \langle f, \varphi \rangle_{E,E^\star} \quad \text{for all } f \in E \text{ and  } \varphi \in E^\star. 
		\end{equation*}
	\end{lemma}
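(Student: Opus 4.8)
The plan is to represent the holomorphic semigroup $T_{\finalfunct}$ as an integral against the group $(\genericGroup(s))_{s\in\RR}$, and then to transfer the $\sigma(E,E^\star)$-continuity of the group (hypothesis (b)) to $T_{\finalfunct}$ by means of a weighted approximate-identity argument. First I would invoke Theorem \ref{compositionRule}: since $A$ and $\finalfunct$ satisfy the hypotheses of Corollary \ref{holomorphicCor}, for every $t>0$ one has $T_{\finalfunct}(t)=(\exp_{-t}\circ\finalfunct)(A)$, defined by the NFC of $A$. Hypothesis (a) of the lemma guarantees that $\exp_{-t}\circ\finalfunct$ has finite boundary values $\exp(-t\finalfunct(\pm a))$ at $\pm a$ (and the value $\exp(-t\finalfunct(\infty))$, possibly $0$, at $\infty$), so that $\exp_{-t}\circ\finalfunct$ is bounded and regular at $M_A$. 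If $\finalfunct(\infty)\neq\infty$ then $\finalfunct(A)$ is bounded, $T_{\finalfunct}(t)=\exp_{-t}(\finalfunct(A))$ is norm-continuous, and the assertion is immediate; hence the real content lies in the case $\finalfunct(\infty)=\infty$, on which I focus.

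Next, because $A\in\BSect(\pi/2,a)$ generates the group $\genericGroup$ with $\|\genericGroup(s)\|\lesssim e^{a|s|}$, I would apply the Hille-type representation of Proposition \ref{HilleStrip} to the bounded, regular symbol $\exp_{-t}\circ\finalfunct$, exactly as is done to derive the explicit semigroup formulas earlier in this section. This produces a kernel $k_t\in L^1(\RR,e^{a|s|}\,ds)$ with
\begin{align*}
 T_{\finalfunct}(t)=\int_{\RR}k_t(s)\,\genericGroup(s)\,ds\qquad\text{in }\mathcal{L}(E),
\end{align*}
where $k_t(s)$ is the Fourier transform in $u$ of $u\mapsto(\exp_{-t}\circ\finalfunct)(z_0+iu)$ along an admissible line $z_0+iu\subset BS_{\varphi,a}$; in particular its total mass is $\int_{\RR}k_t(s)\,ds=\exp(-t\finalfunct(z_0))\to 1$ as $t\downarrow 0$.

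The heart of the proof is to show that $(k_t)_{0<t\le 1}$ is an approximate identity in $L^1(\RR,e^{a|s|}\,ds)$, namely that $\sup_{0<t\le1}\int_{\RR}|k_t(s)|e^{a|s|}\,ds<\infty$, that $\int_{\RR}k_t(s)\,ds\to 1$, and that $\int_{|s|>\delta}|k_t(s)|e^{a|s|}\,ds\to 0$ for every $\delta>0$. As $t\downarrow0$ the symbol $\exp_{-t}\circ\finalfunct$ tends to the constant $1$ locally uniformly, so $k_t$ concentrates at $s=0$; the holomorphy of $\finalfunct$ on a genuine bisector $BS_{\varphi,a}$ (a region strictly wider than the central line) gives the analytic room to shift the contour and bound the weighted tail, while the exactly polynomial growth $|\finalfunct(z)|\sim|z|^{\alpha}$ as $z\to\infty$ (inherited from the hypotheses of Corollary \ref{holomorphicCor}) together with $\finalangle<\pi/2$ forces $\exp_{-t}\circ\finalfunct$ to decay in $u$ for each fixed $t>0$, so that $k_t$ is a bona fide kernel. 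Establishing these three kernel estimates uniformly as $t\downarrow0$ is the main obstacle, and it is precisely here that hypothesis (a) of the lemma (finite limits at $\pm a$, ensuring integrability of the symbol) and the regularity and growth conditions of Corollary \ref{holomorphicCor} are used decisively.

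Finally, with the representation in hand, I would fix $f\in E$ and $\varphi\in E^\star$ and write $\langle T_{\finalfunct}(t)f,\varphi\rangle=\int_{\RR}k_t(s)\,\langle\genericGroup(s)f,\varphi\rangle\,ds$. The scalar function $s\mapsto\langle\genericGroup(s)f,\varphi\rangle$ is bounded by $C\|f\|\,\|\varphi\|\,e^{a|s|}$ and, by hypothesis (b) together with the group law, is continuous at $s=0$ with value $\langle f,\varphi\rangle$. Splitting the integral into the regions $|s|\le\delta$ and $|s|>\delta$ and using the weighted approximate-identity properties of $k_t$ (continuity of the integrand near $0$ on the first region, decay of the weighted tail on the second, and total mass tending to $1$) then yields $\langle T_{\finalfunct}(t)f,\varphi\rangle\to\langle f,\varphi\rangle$ as $t\downarrow0$, which is exactly the claimed $\sigma(E,E^\star)$-continuity.
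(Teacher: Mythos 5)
Your overall strategy is the same as the paper's: represent $T_{\finalfunct}(t)$ via Proposition \ref{HilleStrip} as an integral of the group against the measure $\mu_{h_t}$ with $h_t=\exp_{-t}\circ\finalfunct$, observe that the total mass is $h_t(0)=e^{-t\finalfunct(0)}\to 1$, split at $|s|=\delta$, use hypothesis (b) near $s=0$, and kill the tail. (One cosmetic correction: for $a>0$ the kernel is not a Fourier transform along a single vertical line $z_0+iu$; by Lemma \ref{FourierofF} it is given by contour integrals over the two branches $\Gamma_{\pm}$ of the bisector boundary, with $\Gamma_+$ governing $s>0$ and $\Gamma_-$ governing $s<0$.)

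The genuine gap is in the step you yourself flag as ``the main obstacle'': the uniform weighted bound and the vanishing of the weighted tail $\int_{|s|>\delta}|k_t(s)|e^{a|s|}\,ds$. You justify these by saying that holomorphy on a wider bisector gives room to ``shift the contour and bound the weighted tail,'' but the naive estimate fails. Indeed, bounding $|e^{-t\finalfunct(z)}|\le 1$ on $\Gamma_+$ and integrating $e^{-s(\Re z-a)}$ over $\Gamma_+$ yields only $e^{as}|k_t(s)|\lesssim 1/s$, which is \emph{not} integrable on $(\delta,\infty)$, so neither the uniform weighted mass bound nor dominated convergence follows from contour shifting alone. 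The missing idea, which is where the paper does the real work and where hypothesis (a) enters decisively, is to subtract inside the $\Gamma_+$ integral the term $e^{-t\finalfunct(a)}\tfrac{b+a}{b+z}$ (for some $b>a$), which contributes nothing to $k_t(s)$ for $s>0$ because $\int_{\Gamma_+}e^{-zs}\tfrac{b+a}{b+z}\,dz=0$ by Cauchy's theorem. The modified integrand is then controlled by
\begin{equation*}
e^{-s(\Re z-a)}\min\Bigl\{1,\tfrac{|z-a|+|\finalfunct(z)-\finalfunct(a)|}{|b+z|}\Bigr\},
\end{equation*}
whose extra vanishing at $z=a$ (using the regularity of $\finalfunct$ at $a$ with finite limit, i.e.\ hypothesis (a)) upgrades the $1/s$ bound to an integrable one; dominated convergence then gives a limit kernel $\tfrac{1}{2\pi i}\int_{\Gamma_+}e^{-zs}\tfrac{z-a}{b+z}\,dz$, which vanishes by a second application of Cauchy's theorem. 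Without this subtraction (or an equivalent device such as integration by parts on the contour), your approximate-identity claims remain unproven, and the rest of the argument cannot be completed as written.
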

	
	\begin{proof}
		%Since the semigroup $T_g(t)$ is holomorphic for $t>0$, one only has to check the continuity as $t \downarrow 0$.
		We ask for the regularity conditions at $\{-a,a,\infty\}$ instead of just $M_A$ in order to apply the results given in the appendix of this paper. Now, Proposition \ref{HilleStrip} yields that $T_{\finalfunct}(t) = \int_{-\infty}^\infty \genericGroup(s)\, \mu_{h_t} (ds)$, where $h_t(z) := \exp(-t\finalfunct(z))$,  and $\mu_{h_t} \in M_a(\RR)$ is given in Lemma \ref{FourierofF}. By Lemma \ref{FourierofF} again, one obtains that 
		$$\int_{-\infty}^\infty \,\mu_t (ds) = \int_{-\infty}^\infty e^{i0s}\,\mu_t (ds) = h_t(0) = \exp(-t\finalfunct(0)).$$
		% for all $r \in \RR$. So,  let $r \in \RR$ be such that $g(ir) \neq \infty$.
		Then, for any $f \in E$ and $\varphi \in E^\star$, we have that
		\begin{align}\label{integra}
			\langle T_\finalfunct(t)f, \varphi \rangle_{E,E^\star} - \langle f, \varphi \rangle_{E,E^\star} = \int_{-\infty}^\infty \langle \genericGroup(s)f - f, \varphi\rangle_{E,E^\star}  \mu_{h_t}(s) \, ds
			+ (e^{-t\finalfunct(0)}-1)\langle f, \varphi \rangle_{E,E^\star}.
		\end{align}
		We have to prove that the integral term in \eqref{integra} tends to $0$ as $t \downarrow 0$. Since by assumption $(\genericGroup(t))_{t\in \RR}$ is $\sigma(E,E^\star)$-continuous, we have that $\lim_{t\downarrow 0} \langle \genericGroup (t) f, \varphi \rangle_{E,E^\star} = \langle f, \varphi \rangle_{E,E^\star}$. Thus, for any $\varepsilon > 0$, there exists $\delta > 0$ such that $|\langle \genericGroup(s)f - e^{irs}f, \varphi \rangle_{E,E^\star} | < \varepsilon$ for all $|s| < \delta$. Hence, for some $C>0$ independent of $t$ and $\varepsilon$, we have that 
		\begin{align*}
			&\limsup_{t \downarrow 0}  \left|\int_{-\infty}^\infty \langle \genericGroup(s)f - f, \varphi\rangle_{E,E^\star}  \mu_{h_t}(s) \, ds\right| 
			\leq  C\varepsilon  + \limsup_{t \downarrow 0}  \left|\int_{|s|>\delta} \langle \genericGroup(s)f - f, \varphi\rangle_{E,E^\star}  \mu_{h_t}(s) \, ds\right|.
		\end{align*}
		Let us work with the above integral when $s>\delta$, leaving the case $s<-\delta$, which is completely analogous.
		%Assume first that $g(a) \neq \infty$. In particular, either $a > 0$ or $g(0) \neq \infty$, so we can assume that $r=0$.
		By Lemma \ref{FourierofF}, one gets that
		\begin{align*}
			&\int_{s>\delta} \langle \genericGroup(s)f - f, \varphi\rangle_{E,E^\star}\,  \mu_{h_t}(s) \, ds
			=  \int_{s>\delta} \langle \genericGroup(s)f - f, \varphi\rangle_{E,E^\star}\,  \frac{1}{2\pi i} \int_{\Gamma_+}  e^{-zs} e^{- t\finalfunct(z)} \, dzds
			\\ =& \int_{s>\delta} \langle \genericGroup(s)f - f, \varphi\rangle_{E,E^\star}  \frac{1}{2\pi i} \int_{\Gamma_+}  e^{-zs} \left(e^{- t\finalfunct(z)}- e^{-t\finalfunct(a)}\frac{b+a}{b+z}\right) \, dzds,
			%\\ & \qquad + \int_{s>\delta} \langle_{E,E^\star} T(s)f - f, \varphi\rangle  \frac{1}{2\pi i} \int_{\Gamma_+}  e^{-zs -tg(a)}\frac{b+a}{b+z} \, dzds
		\end{align*}
		where we have used Cauchy's theorem in the last equality and the fact that for $b>a$,  we have that
		$$e^{-t\finalfunct(a)} \int_{\Gamma_+}  e^{-zs}\frac{b+a}{b+z} \, dz = 0, \qquad \text{for all } s,t> 0.$$
		Now, applying the Lebesgue Dominated Convergence Theorem we obtain that 
		\begin{align*}
			&\limsup_{t \downarrow 0} \left|\int_{s>\delta} \langle \genericGroup(s)f - f, \varphi\rangle_{E,E^\star}\,  \mu_{h_t}(s) \, ds\right|
			=  \left| \int_{s>\delta} \langle \genericGroup(s)f - f, \varphi\rangle_{E,E^\star}  \frac{1}{2\pi i} \int_{\Gamma_+}  e^{-zs} \frac{z-a}{b+z} \, dzds \right| = 0,
		\end{align*}
		where we have used again Cauchy's theorem in the last equality. To check the hypothesis of the Dominated Convergence Theorem, one has to bound the following expression: 
		\begin{align*}
			F_t(s,z):= e^{-s(\Re z-a)}  \left|e^{- t\finalfunct(z)}- e^{-t\finalfunct(a)}\frac{b+a}{b+z}\right|, \quad s>\delta, \, z \in \Gamma_+,
		\end{align*}
		by an integrable function for all $t \in (0,\varepsilon')$, where $\varepsilon'$ is any number $\varepsilon' > 0$.	Since $\Re \finalfunct(z) \geq 0$ implies that $\sup_{t>0, z \in \Gamma_+}|e^{-t\finalfunct(z)}| < \infty$, an easy bound of the term between $|(\cdot)|$ leads to
		\begin{align*}
			F_t(s,z) \lesssim e^{-s(\Re z -a)} \min \left\{1,\frac{|z-a|+ |\finalfunct(z)-\finalfunct(a)|}{|b+z|}\right\},
		\end{align*}
		which is easily seen to be integrable by integrating first on $s$ and then in $z$ (recall that the function $\finalfunct$ is regular at $a$).
		The proof is finished.
		%Now, assume that $g(a) = \infty$. Then, one can repeat similar steps as above but subtracting the function $\frac{z-a}{z-(a-t)}$. Indeed, Cauchy's theorem again implies that
		%\begin{align*}
		%	&\int_{s>\delta} \langle G(s)f - e^{irs} f, \varphi\rangle_{E,E^\star}\,  \mu_{h_t}(s) \, ds\\
		%	=& \int_{s>\delta} \langle G(s)f - e^{irs} f, \varphi\rangle_{E,E^\star} \, \frac{1}{2\pi i} \int_{\Gamma_+}  e^{-zs} \left(e^{- tg(z)}- \frac{z-a}{t+z+b}\right) \, dzds.
		%\end{align*}
		%And as before, one applies the dominated convergence first as $t \downarrow 0$, and Cauchy's theorem to conclude again that its limits is $0$ as $t \downarrow 0$. Indeed, we have that
		%\begin{align*}
		%	\left| e^{- tg(z)}- \frac{z-a}{t+z+b} \right| &\lesssim \frac{|e^{-tg(z)} -1||z-a| + |te^{-tg(z)}|} {|t+z-a|}
		%	\lesssim \frac{|z-a| + |\Re g(z)|^{-1}}{|t+z-a|},
		%\end{align*}
		%where we have used that $e^{-x} \lesssim x^{-1}$ for all $x>0$. Using these bounds similarly as in the case $g(a) \neq \infty$ yields that one can apply the dominated convergence. The proof is finished.
	\end{proof}
	
	\begin{proof}[\bf Proof of Theorem \ref{ACP1Theorem}]
		Once we have proven that $T_{B_E}(t) f$ is $\sigma(E, E^\star)$-continuous on $t$ as $t \downarrow 0$ for all $f \in E$, the assertions follow by a similar reasoning as in the proofs of Theorem \ref{ACP0Theorem} and \cite[Theorem 5.8]{arendt2002spectrum}. Then, for the operators we are considering, we only have to check the exponentially bound condition of Lemma \ref{dualContinuity}. But, except for the case $\underline{\eta}_E = 0$ and $B_E = (J_E)^{2\alpha}$, we can always assume that they are satisfied since the functions $\finalfunct_{B_E}$,  for which $B_E = \finalfunct_{B_E}(J_E)$,  are holomorphic in strictly wider bisectors than the ones with singular points in $\underline{\eta}_E, \overline{\eta}_E$. If this is the case, given any $\varepsilon >0$, we have that $\|G_E(t)\|_{\mathcal L(E)} \lesssim \max\{e^{(\underline{\eta}_E - \varepsilon) t}, e^{(\overline{\eta}_E+\varepsilon) t}\}$ for all $t \in \RR$. And regarding the case $\underline{\eta}_E = 0$, one still has that $\|G_E(t)\|_{\mathcal L(E)}\lesssim 1$ for $t\leq 0$ (see e.g.  \cite{arendt2002spectrum}). Then, we can apply Lemma \ref{dualContinuity} to obtain that $T_{B_E}(t)f$ is $\sigma(E,E^\star)$-continuous. The proof is finished.
	\end{proof}
	
	It is easy to check that when $\alpha = 1$, all the different generalized Black--Scholes equations presented above yield the classical  Black--Scholes equation given by \eqref{BSEquationIntro}. In this case,  the above results  retrieve the ones obtained in \cite[Section 5]{arendt2002spectrum}. In particular, one gets the formula for the semigroup $T_{B_E}$, given by 
	\begin{align*}
		\left(T_{B_E}(w)f\right)(x) &= \frac{1}{2\pi} \int_0^\infty \frac{f(s)}{s} 
		\int_{-\infty}^\infty \left(\frac{s}{x}\right)^{iu} \exp\left(-wu^2\right) \, du ds 
		\\ &= \frac{1}{\sqrt{4\pi w}} \int_0^\infty \exp \left(- \frac{(\log x - \log s)^2}{4w}\right) \frac{f(s)}{s}\,ds, \qquad x >0, \, \Re w > 0,
	\end{align*}
	where in the last equality we have made use of the integral identity \cite[Formula 3.233(2)]{gradshteyn2014table}.

	\begin{remark}
		The above results do not cover (in general) the case $\alpha = 1/2,3/2,5/2,...$. This is closely related to the odd powers of a generator of a group (see Corollary \ref{StripCor} and \cite[Theorem 4.6]{baeumer2009unbounded}).  Indeed, one can prove that when $\alpha = 1/2,3/2,5/2,...$, the considered operators for $B_E$, except the last one $\mathcal{W}_E^\alpha (I - \mathcal{\riemannDerivative}_E^\alpha)$,  are bisectorial operators of angle $\frac{\pi}{2}$. Unfortunately, this is a necessary but not sufficient condition to determine that they generate semigroups.
	\end{remark}

%	{\appendix\section{Functional framework}\label{appendix}
	
{\appendix\section{Functional calculus of generators of exponentially bounded groups}\label{Appendix}
	In this appendix, we give some auxiliary results in the case where $A$ is the generator of an exponentially bounded group $(\genericGroup(t))_{t\in \RR}$ on a Banach space $X$ satisfying $\|\genericGroup(t)\|_{\mathcal L(X)} \lesssim \exp(a|t|)$ for all $t \in \RR$ and some $a\geq 0$. It is well known that in this case, $A \in \BSect(\pi/2, a)$, see for example \cite[Section 2.1.1]{haase2006functional}.  The following results are completely analogous to the ones given in \cite[Theorem 5.2]{bade1953operational} for the primary functional calculus of strip operators; or in \cite[Section 3.3]{haase2006functional} for the NFC of sectorial operators.
	
	It should be mentioned that through this appendix, we will only work with the NFC for bisectorial rate operators, not including the different NFCs presented in Section \ref{extensionCalculusSubsection}. The reason for this is that, in order to successfully apply some identities, we will need that the integration paths of the NFC leave the spectrum of $A$ completely on one side. This is enough to cover all the results that use the appendix.
	%\textcolor{red}{One could find other ways to obtain analogous results to the other NFCs presented in Subsection \ref{extensionCalculusSubsection}. However, as it would require much more notation and we will give only a minor result, we thought it was not worthy to be included.}

	First, recall the integral representation for the resolvent of the generator $A$ of an exponentially bounded group $(\genericGroup(t))_{t\in \RR}$ given by
	\begin{equation}
		\begin{aligned}\label{integralRepresentationResolvent}
			R(z,A) = \int_0^\infty e^{-zt}\genericGroup(t) \, dt, \text{ if }\Re z >a, \quad R(z,A) &= -\int_{-\infty}^0 e^{-zt}\genericGroup(t) \, dt, \text{ if }\Re z <-a.
		\end{aligned}
	\end{equation}
	
	Next, for any $a \geq 0$, let $M_a (\RR)$ be the set of Borel measures $\mu$ on $\RR$ for which $e^{a|t|}$ is $\mu$-integrable. It is readily seen that $M_a (\RR)$ is closed under translation and convolution. Moreover, for any $\mu \in M_a (\RR)$, one can define its Fourier transform $\mathcal{F}$ given by
	\begin{align*}
		(\mathcal{F} \mu) (z) = \int_{-\infty}^\infty e^{-zt} \,\mu(dt), \qquad \text{for all } z \in BS_{\pi/2,a}.
	\end{align*}
	
	%It is readily seen that $\mathcal{F}\mu \in \mathcal{O}(BS_{\pi/2,a})$ and that $\mathcal{F}(\mu \ast \nu) = \mathcal{F}\mu \cdot \mathcal{F}\nu$, where $\ast$ denotes the convolution product and $\cdot$ point-wise product.

	%We will need lemma below
	\begin{lemma}\label{FourierofF}
		Let $a\geq0$ and $f \in \mathcal{E}[BS_{\pi/2,a}]\oplus \CC \mathbf{1}$. Then,  there exists a (unique) measure $\mu_f \in M_a(\RR)$ such that $f(z) = \mathcal{F}\mu_f(-z)$ for all $z \in BS_{\pi/2,a}$, which is given by $\mu_f(dt) = \psi_f (t) dt + c\delta_0(dt)$, where $c = f (\infty)$ and
		\begin{align}\label{psifFormula}
			\psi_f (t) := \begin{cases*}
				\displaystyle	\frac{-1}{2\pi i} \int_{\Gamma_-} e^{-zt}f(z) \, dz, \quad t < 0,\\
				\displaystyle \frac{1}{2\pi i} \int_{\Gamma_+} e^{-zt} f(z) \, dz, \quad t > 0,
			\end{cases*}
		\end{align}
		and where $\Gamma$ is any path of integration for the NFC of bisectorial operators, $\Gamma_- := \Gamma \cap \Re z < -a$ and $\Gamma_+ := \Gamma \cap \Re z > a$.
	\end{lemma}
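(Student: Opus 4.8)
The plan is to read this as an inverse Fourier--Laplace (Paley--Wiener) statement: a function holomorphic in the strip $BS_{\pi/2,a}=\{|\Re z|<a\}$ with the $\mathcal E_0$-decay is the bilateral Laplace transform of the density obtained by integrating over the two lateral branches $\Gamma_\pm$ of the contour. I would first use linearity of both $f\mapsto\psi_f$ and $\mu\mapsto\mathcal F\mu$ to reduce to the summands of $\mathcal E[BS_{\pi/2,a}]\oplus\CC\mathbf 1=\mathcal E_0[BS_{\pi/2,a}]\oplus\CC\tfrac{1}{b+z}\oplus\CC\tfrac1{b-z}\oplus\CC\mathbf 1$. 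The constant $\mathbf 1$ is handled by $\mu_{\mathbf 1}=\delta_0$ (so $c=1$ and $\psi_{\mathbf 1}=0$, the latter because $\int_{\Gamma_+}e^{-zt}\,dz=0$ by vanishing of $e^{-zt}$ at the two ends of $\Gamma_+$ when $t>0$), and the resolvent pieces by the elementary identities $\tfrac1{b-z}=\int_0^\infty e^{-bt}e^{zt}\,dt$ and $\tfrac1{b+z}=\int_{-\infty}^0 e^{bt}e^{zt}\,dt$, valid on the strip since $b>a$; these give $\psi(t)=e^{-bt}\mathbf 1_{t>0}$ and $\psi(t)=e^{bt}\mathbf 1_{t<0}$, both in $M_a(\RR)$ because $b>a$, and they agree with \eqref{psifFormula} by a residue computation. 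The substantive case is thus $f\in\mathcal E_0[BS_{\pi/2,a}]$, where $c=f(\infty)=0$.

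For such $f$ I would first check that $\psi_f$ in \eqref{psifFormula} is well defined and path independent. Along $\Gamma_+=\Gamma\cap\{\Re z>a\}$ one has $\Re z\to+\infty$, so for $t>0$ the factor $e^{-zt}$ decays exponentially and, $f$ being bounded, the integral converges; Cauchy's theorem together with this decay gives independence of the choice of $\omega'\in(\varphi,\pi/2)$, and symmetrically for $\Gamma_-$ and $t<0$. Membership $\mu_f\in M_a(\RR)$ is the cleanest use of the $\mathcal E_0$-condition: by Tonelli,
\begin{align*}
\int_0^\infty e^{at}|\psi_f(t)|\,dt
\le \frac1{2\pi}\int_{\Gamma_+}|f(z)|\int_0^\infty e^{-t(\Re z-a)}\,dt\,|dz|
=\frac1{2\pi}\int_{\Gamma_+}\frac{|f(z)|}{\Re z-a}\,|dz|,
\end{align*}
and since $\Re z-a=|z-a|\cos\omega'$ on $\Gamma_+$ this is $\lesssim\int_{\Gamma_+}|f(z)|/\min\{|z-a|,|z+a|\}\,|dz|<\infty$; the case $t<0$ is identical with $\Gamma_-$ and $e^{-at}$.

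The heart of the argument is the inversion $f(z_0)=\int_{-\infty}^\infty e^{z_0t}\psi_f(t)\,dt$ for $z_0$ in the strip. Splitting into $t>0$ and $t<0$, inserting \eqref{psifFormula}, and applying Fubini (licit because $\Re z_0<a<\Re z$ on $\Gamma_+$ and $\Re z<-a<\Re z_0$ on $\Gamma_-$, with the same $\mathcal E_0$-bound controlling the double integral), the inner $t$-integrals produce $\tfrac1{z-z_0}$ and one obtains
\begin{align*}
\int_{-\infty}^\infty e^{z_0t}\psi_f(t)\,dt=\frac1{2\pi i}\int_{\Gamma_+}\frac{f(z)}{z-z_0}\,dz+\frac1{2\pi i}\int_{\Gamma_-}\frac{f(z)}{z-z_0}\,dz=\frac1{2\pi i}\int_{\Gamma}\frac{f(z)}{z-z_0}\,dz .
\end{align*}
Since $z_0\in BS_{\omega',a}$ and $f(\infty)=0$, capping $\Gamma$ at $|z|=R$ and letting $R\to\infty$ (the arc contribution is $o(1)$ because $|f|\to0$), Cauchy's integral formula gives that the right-hand side equals $f(z_0)=\mathcal F\mu_f(-z_0)$. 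Uniqueness then follows from injectivity of the Fourier transform: if $\mathcal F\mu_f(-\cdot)=\mathcal F\nu(-\cdot)$ on the strip, then in particular $\widehat{\mu_f-\nu}=0$ on $i\RR\subset\overline{BS_{\pi/2,a}}$, whence $\mu_f=\nu$.

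The main obstacle I anticipate is this inversion step: justifying Cauchy's integral formula on the unbounded bisector domain — arranging the orientation so that the winding number about $z_0$ is exactly one and verifying that the arcs at infinity vanish — together with the Fubini interchange, both of which hinge on the delicate interplay between the exponential factors $e^{\pm(\Re z-a)t}$ and the corner behaviour of $f$ at $\pm a$, where the $\mathcal E_0$ integrability $\int|f(z)|/|z-a|\,|dz|<\infty$ must absorb the singularity $1/(\Re z-a)$.
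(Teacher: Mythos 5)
Your argument is correct, and it is essentially the proof the paper omits by citing the sectorial analogue (\cite[Lemma 3.3.1]{haase2006functional}): reduce to the summands of $\mathcal{E}_0\oplus\CC\frac{1}{b+z}\oplus\CC\frac{1}{b-z}\oplus\CC\mathbf{1}$, get $\mu_f\in M_a(\RR)$ from Tonelli plus the $\mathcal{E}_0$-integrability condition (using $\Re z-a=|z-a|\cos\omega'$ on $\Gamma_+$), and recover $f(z_0)$ by Fubini and Cauchy's integral formula on the capped bisector. All the delicate points you flag (orientation of $\Gamma_\pm$, vanishing arcs, the Fubini bound $1/(\Re z-\Re z_0)\le 1/(|z-a|\cos\omega')$) do work out as you describe, so there is no gap.
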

	
	\begin{proof}
		The proof is the same as in the case of sectorial operators (see \cite[Lemma 3.3.1]{haase2006functional}).  We omit the details for the sake of brevity.
	\end{proof}
	
	\begin{remark}\label{psiFdecayingRemark}
		Let $f$ be as above, and assume furthermore that $|f(z)| \lesssim |z|^{-(1+\varepsilon)}$ as $z \to \infty$ for some $\varepsilon > 0$. An easy application of Cauchy's theorem to \eqref{psifFormula} yields that
		\begin{align*}
			\psi_f (t) =  \frac{1}{2\pi} \int_{-\infty}^\infty e^{-itu}f(iu) \, du, \quad t \in \RR.
		\end{align*}
	\end{remark}

	%\begin{remark}\label{remarkFourier}
	%	As a consequence of Lemma above, given $f \in  \mathcal{E}[BS_{\pi/2,a}]$ we find a uniquely determined $\mu \in M_a(\RR)$ such that $f(z) = \mathcal{F}(\mu)(-z)$ for all $z \in BS_{\pi/2,a}\backslash\{-a,a\}$, where
	%	\begin{align*}
		%		\mu:= \psi_g (t) dt \oplus k_1 e^{-bt} \chi_{(0,\infty)}(t) dt \oplus k_2 e^{bt} \chi_{(-\infty,0)}(t) dt,
		%	\end{align*}
	%	and $f = g + \frac{k_1}{b-z} + \frac{k_2}{b+z}$, for some $g \in \mathcal{E}_0 [BS_{\pi,2}]$, $k_1,k_2 \in \CC$, $b>a$. Even more, an easy computation shows that $\mu = \psi_f dt$, where $\psi_f$ is given by \eqref{psifFormula} for any $f \in \mathcal{E}[BS_{\pi/2,a}]$.
	%\end{remark}

	\begin{proposition}\label{HilleStrip}
		Let $A$ be the generator of an  exponentially bounded group $(\genericGroup(t))_{t\in \RR}$ on $X$ satisfying $\|\genericGroup(t)\|_{\mathcal L(X)} \lesssim  e^{a |t|}$ for some $a\geq 0$, so that $A \in \BSect(\pi/2,a)$. Let $\mu \in M_a(\RR)$ be such that $f(z):=\mathcal{F}\mu(-z) \in \mathcal{M}[BS_{\pi/2,a}]_A$. Then, 
		$$f(A) = \int_{-\infty}^\infty \genericGroup(t) \, \mu(dt).
		$$
	\end{proposition}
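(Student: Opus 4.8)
The plan is to prove the identity in three stages, the structural backbone being the fact that the assignment $\mu \mapsto T_\mu := \int_{-\infty}^\infty \genericGroup(t)\,\mu(dt)$ is an algebra homomorphism from $(M_a(\RR),\ast)$ into $\mathcal{L}(X)$, where $\ast$ denotes convolution. First I would record that $T_\mu$ is a well-defined bounded operator: since $\|\genericGroup(t)\|_{\mathcal{L}(X)} \lesssim e^{a|t|}$ and $e^{a|t|}$ is $\mu$-integrable by definition of $M_a(\RR)$, the integrand is strongly measurable and Bochner integrable, so $\|T_\mu\|_{\mathcal{L}(X)} \lesssim \int_{-\infty}^\infty e^{a|t|}\,|\mu|(dt) < \infty$. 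The homomorphism property then follows from the group law $\genericGroup(s)\genericGroup(t) = \genericGroup(s+t)$ together with Fubini's theorem (justified by the same exponential bound on the product measure):
\begin{align*}
	T_\mu T_\nu = \int_{-\infty}^\infty\!\!\int_{-\infty}^\infty \genericGroup(s)\genericGroup(t)\,\mu(ds)\,\nu(dt) = \int_{-\infty}^\infty\!\!\int_{-\infty}^\infty \genericGroup(s+t)\,\mu(ds)\,\nu(dt) = T_{\mu\ast\nu}.
\end{align*}

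Second, I would establish the formula for elementary symbols, i.e.\ for $f \in \mathcal{E}[BS_{\pi/2,a}]\oplus\CC\mathbf{1}$, showing $f(A) = T_{\mu_f}$ with $\mu_f$ the measure produced by Lemma \ref{FourierofF}. Writing $f = f_0 + c\mathbf{1}$ with $f_0 \in \mathcal{E}_0$ and $c = f(\infty)$, the constant part contributes $c\,\mathbf{1}(A) = cI = c\,\genericGroup(0)$, matching the atom $c\delta_0$ in $\mu_f$. For the $\mathcal{E}_0$ part I would insert the resolvent representation \eqref{integralRepresentationResolvent} into the Cauchy integral defining $f_0(A)$, using the splitting $\Gamma = \Gamma_+\cup\Gamma_-$ dictated by the appendix (so that $\Re z > a$ on $\Gamma_+$ and $\Re z < -a$ on $\Gamma_-$, where the two branches of \eqref{integralRepresentationResolvent} are valid). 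After interchanging the order of integration one obtains
\begin{align*}
	f_0(A) = \int_0^\infty \genericGroup(t)\Big(\tfrac{1}{2\pi i}\int_{\Gamma_+} e^{-zt}f_0(z)\,dz\Big)\,dt + \int_{-\infty}^0 \genericGroup(t)\Big(\tfrac{-1}{2\pi i}\int_{\Gamma_-} e^{-zt}f_0(z)\,dz\Big)\,dt,
\end{align*}
and the two inner integrals are exactly $\psi_{f_0}(t)$ for $t>0$ and $t<0$ respectively, by \eqref{psifFormula}. Hence $f_0(A) = \int_{-\infty}^\infty \genericGroup(t)\psi_{f_0}(t)\,dt = T_{\mu_{f_0}}$, and adding the constant term completes this stage.

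Third, I would remove the elementary restriction by regularization. For general $f \in \mathcal{M}[BS_{\pi/2,a}]_A$ with $f = \mathcal{F}\mu(-\cdot)$, pick a regularizer $e \in \mathcal{E}[BS_{\pi/2,a}]$ with $e(A)$ injective and $ef \in \mathcal{E}[BS_{\pi/2,a}]$, so that $f(A) = e(A)^{-1}(ef)(A)$ by definition. Since $\mathcal{F}$ turns products into convolutions, $ef = \big(\mathcal{F}\mu_e\cdot\mathcal{F}\mu\big)(-\cdot) = \mathcal{F}(\mu_e\ast\mu)(-\cdot)$ is represented by $\mu_e\ast\mu \in M_a(\RR)$ (using that $M_a(\RR)$ is closed under convolution), and uniqueness in Lemma \ref{FourierofF} gives $\mu_{ef} = \mu_e\ast\mu$. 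Applying the second stage to the elementary symbols $e$ and $ef$ and then the homomorphism property of the first stage, I obtain $(ef)(A) = T_{\mu_{ef}} = T_{\mu_e\ast\mu} = T_{\mu_e}T_\mu = e(A)\,T_\mu$. Consequently $(ef)(A)x = e(A)T_\mu x \in \mathcal{R}(e(A))$ for every $x \in X$, whence $\mathcal{D}(f(A)) = X$ and, by injectivity of $e(A)$, $f(A) = e(A)^{-1}e(A)T_\mu = T_\mu$, which is the assertion.

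The main obstacle is the interchange of integration in the second stage: a generic $f_0 \in \mathcal{E}_0[BS_{\pi/2,a}]$ only satisfies $\int |f_0(z)|\,|dz|/\min\{|z-a|,|z+a|\} < \infty$ along the contour, whereas the naive Fubini bound $|f_0(z)|\int_0^\infty e^{-(\Re z - a)t}\,dt$ would require $f_0 \in L^1$ on the vertical branches. I would resolve this by first proving the identity for rapidly decaying symbols satisfying $|f_0(z)|\lesssim |z|^{-(1+\varepsilon)}$ (for which the representation of $\psi_{f_0}$ in Remark \ref{psiFdecayingRemark} applies and absolute integrability over $\Gamma_\pm$ is immediate), and then passing to the general case by multiplying $f_0$ by regularizing factors of the form $(b^2-a^2)/(b^2-z^2)$ with $b>a$, which force $L^1$-decay along the vertical branches; letting $b\to\infty$, both sides converge, the functional-calculus side by the continuity properties of $\Phi$ and the measure side by dominated convergence against $e^{a|t|}$. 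A secondary point requiring care is verifying that the integration path leaves $\sigma(A)$ entirely on one side of each branch, as flagged at the opening of the appendix, since this is precisely what makes both equalities in \eqref{integralRepresentationResolvent} simultaneously available.
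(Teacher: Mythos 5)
Your proof is correct and follows the same route the paper itself takes: the paper's proof of Proposition \ref{HilleStrip} is a one-line deferral to the sectorial analogue \cite[Proposition 3.3.2]{haase2006functional}, whose argument is precisely your three-stage scheme (convolution homomorphism $\mu\mapsto T_\mu$, Fubini against \eqref{integralRepresentationResolvent} for elementary symbols, regularization for the general case). Two remarks. First, the ``main obstacle'' you describe does not actually arise: the NFC contour for $A\in\BSect(\pi/2,a)$ is $\partial BS_{\omega',a}$ with $\omega'<\pi/2$ strict, so its branches are not vertical; on $\Gamma_+$ one has $\Re z-a=|z-a|\cos\omega'$, hence the naive Fubini bound $\int_{\Gamma_+}|f_0(z)|\,(\Re z-a)^{-1}\,|dz|$ is, up to a constant, exactly the integrability condition defining $\mathcal{E}_0$. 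Your approximation detour via the factors $(b^2-a^2)/(b^2-z^2)$ is sound but unnecessary. Second, there is a small slip in stage two: the decomposition $f=f_0+c\mathbf{1}$ with $f_0\in\mathcal{E}_0$ does not exhaust $\mathcal{E}\oplus\CC\mathbf{1}$, since the summands $\CC(b\pm z)^{-1}$ of $\mathcal{E}$ have nonzero limits at $\pm a$ and so do not lie in $\mathcal{E}_0\oplus\CC\mathbf{1}$; as the regularizer $e$ used in stage three will in general contain such a term, these must be treated separately. The fix is immediate — by \eqref{integralRepresentationResolvent} one has $\Phi\bigl((b-z)^{-1}\bigr)=R(b,A)=T_\mu$ with $\mu(dt)=e^{-bt}\mathbf{1}_{(0,\infty)}(t)\,dt\in M_a(\RR)$ since $b>a$, and likewise for $(b+z)^{-1}$ — but it needs to be said for the regularization step to close.
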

	
	\begin{proof}
		The proof follows as in the case of sectorial operators (see \cite[Proposition 3.3.2]{haase2006functional}). We omit the details for the sake of brevity.
	\end{proof}
	
}	
	%\bibliography{mybib}

%\bibliographystyle{plainnat}

\bibliographystyle{plain}
\bibliography{biblio}
\end{document}